\newtheoremstyle{note}
  {3pt}
  {3pt}
  {}
  {}
  {\itshape}
  {:}
  {.5em}
  {}
\newtheorem{lemma}{Lemma}[section]
\newtheorem{proposition}[lemma]{Proposition}
\newtheorem{remark}[lemma]{Remark}
\theoremstyle{definition}{\newtheorem{definition}[lemma]{Definition}}
\theoremstyle{note}{
\newtheorem*{claim*}{Claim}}
\numberwithin{equation}{section}
\newcommand{\dom}{D}
\newcommand{\sol}{\mathcal{U}}
\newsavebox{\@brx}
\newcommand{\llangle}[1][]{\savebox{\@brx}{\(\m@th{#1\langle}\)}%
  \mathopen{\copy\@brx\kern-0.5\wd\@brx\usebox{\@brx}}}
\newcommand{\rrangle}[1][]{\savebox{\@brx}{\(\m@th{#1\rangle}\)}%
  \mathclose{\copy\@brx\kern-0.5\wd\@brx\usebox{\@brx}}}
\newcommand{\norm}[1]{\left\| #1 \right\|}
\newcommand{\weakstar}{\overset{\star}\rightharpoonup}
\newcommand{\eps}{\varepsilon}
\renewcommand{\i}{\ifmmode\mathit{\mathchar"7010 }\else\char"10 \fi}
\renewcommand{\j}{\ifmmode\mathit{\mathchar"7011 }\else\char"11 \fi}
\newcommand{\R}{\mathbb{R}}
\newcommand{\N}{\mathbb{N}}
\newcommand{\Grad}{\nabla}
\newcommand{\Div}{\operatorname{div}}
\newcommand{\Curl}{\operatorname{curl}}
\newcommand{\weak}{\rightharpoonup}
\newcommand{\hdiv}{L^2_{\Div}(\dom)}
\newcommand{\V}{H^1_{\Div}(\dom)}
\newcommand{\ths}{\mathbb{D}} 
\newcommand{\bu}{\mathbf{u}}
\newcommand{\bm}{\mathbf{m}}
\newcommand{\Bu}{\mathbf{U}}
\newcommand{\Bm}{\mathbf{M}}
\newcommand{\Bh}{\mathbf{H}}
\newcommand{\bma}{\mathbf{m}^{N_u,N_\theta,N_m}}
\newcommand{\bha}{\mathbf{h}^{N_u,N_\theta,N_m}}
\newcommand{\bua}{\mathbf{u}^{N_u,N_\theta,N_m}}
\newcommand{\tha}{\theta^{N_u,N_\theta,N_m}}
\newcommand{\pha}{\varphi^{N_u,N_\theta,N_m}}
\newcommand{\bmb}{\mathbf{m}^{N_u,N_\theta}}
\newcommand{\bhb}{\mathbf{h}^{N_u,N_\theta}}
\newcommand{\bub}{\mathbf{u}^{N_u,N_\theta}}
\newcommand{\thb}{\theta^{N_u,N_\theta}}
\newcommand{\phb}{\varphi^{N_u,N_\theta}}
\newcommand{\psib}{\psi^{N_u,N_\theta}}
\newcommand{\bmc}{\mathbf{m}^{N_u}}
\newcommand{\bhc}{\mathbf{h}^{N_u}}
\newcommand{\buc}{\mathbf{u}^{N_u}}
\newcommand{\thc}{\theta^{N_u}}
\newcommand{\phc}{\varphi^{N_u}}
\newcommand{\psic}{\psi^{N_u}}
\newcommand{\Fu}{\mathbf{F}_u}
\newcommand{\Fm}{\mathbf{F}_m}
\newcommand{\Jtheta}{J_\theta}
\newcommand{\bh}{\mathbf{h}}
\newcommand{\bud}{\mathbf{u}^\delta}
\newcommand{\bmd}{\mathbf{m}^\delta} 
\newcommand{\bhd}{\mathbf{h}^\delta}
\newcommand{\td}{\theta^\delta}
\newcommand{\normal}{\mathbf{n}}
\newcommand{\lint}{\int_0^T\!\!\!\int_{\dom}\!}
\newcommand{\bmt}{\mathbf{m}^{\tau}}
\newcommand{\bht}{\mathbf{h}^{\tau}}
\newcommand{\but}{\mathbf{u}^{\tau}}
\newcommand{\tht}{\theta^{\tau}}
\newcommand{\pht}{\varphi^{\tau}}
\newcommand{\psit}{\psi^{\tau}}
\title[Multiphase flow ferrofluids]{Global existence of weak solutions to a two-phase diffuse interface model of  ferrofluids dynamics}
\author[S. Lanthaler]{Samuel Lanthaler}
\address[Samuel Lanthaler]{\newline Faculty of Mathematics \newline University of Vienna \newline Kolingasse 14 - 16,
1090 Vienna, Austria.}
\email[]{samuel.lanthaler@univie.ac.at}
\author[F. Weber]{Franziska Weber}
\address[Franziska Weber]{\newline Department of Mathematics \newline UC Berkeley \newline Berkeley, CA, 94720, USA.}
\email[]{fweber@berkeley.edu}
\thanks{F.W. was supported in part by NSF grant DMS-2438083.}
\date{\today}
\begin{document}

\maketitle
\begin{abstract}
   Ferrofluids are a class of materials that exhibit both fluid and magnetic properties. We consider a two-phase diffuse interface model for the dynamics of ferrofluids on a bounded domain. One phase is assumed to be magnetic, the other phase can be magnetic or non-magnetic. We derive a coupled system of partial differential equations consisting of the incompressible Navier-Stokes equations, an evolution equation for the magnetization, the magnetostatics equations for the magnetic field and the Cahn-Hilliard equations for the evolution of the phase field variable, which are all coupled through various source terms and parameters. In contrast to similar models in the literature, the system in this work formally satisfies an energy balance which remains meaningful even in singular limits such as a limit of zero relaxation time. However, the formal derivation of this balance requires a delicate cancellation of several highly non-linear terms, making it challenging to ensure similar cancellations for approximating systems. Our first main result is to prove the existence of global weak solutions for our ferrofluid system based on a carefully constructed sequence of approximation steps. Additionally, we also study the relaxation towards the quasi-equilibrium, in which case the magnetization equation degenerates to a linear relation between the magnetic field and the magnetization. As our second main result, we prove the rigorous convergence to this limiting system.
  
\end{abstract}

\section{Introduction}
 Ferrofluids, also known as magnetic liquids, are a unique class of materials that exhibit both fluid and magnetic properties. They are colloidal suspensions of nanoscale ferromagnetic particles, typically iron oxides like magnetite ($Fe_3O_4$) or maghemite ($\gamma-Fe_2 O_3$), coated with a surfactant and dispersed in a liquid carrier such as water or oil. This composition prevents the aggregation of the magnetic nanoparticles and allows the fluid to become strongly magnetized in the presence of an external magnetic field~\cite{Oehlsen2022,Odenbach2002}. This ability to be remotely manipulated by magnetic fields has led to a diverse range of technological and biomedical applications, from enhancing the performance of loudspeakers and acting as liquid seals in hard drives to targeted drug delivery and cancer hyperthermia~\cite{AmiriRoodan2020,Raj1995,Scherer2005,Pankhurst2003}.
 
 The fascinating and complex behavior of ferrofluids, particularly the dynamics of their free surface under the influence of a magnetic field, has motivated the development of sophisticated mathematical models to describe and predict their motion. The first such mathematical  models for single-phase flows of ferrofluids
 are due to Rosensweig~\cite{Rosensweig1985,Rosensweig1985a,NeuringerRosensweig1964} and Shliomis~\cite{Shliomis2002}.
 These models are crucial for optimizing existing applications and for the design of new technologies. In this paper, we are concerned with two-phase flows of ferrofluids or a ferrofluid and a non-magnetic fluid. Among the various theoretical frameworks to approach multi-phase flows, diffuse interface models have emerged as a powerful tool. Unlike sharp interface models that treat the boundary between the ferrofluid and its surrounding medium as a distinct surface, diffuse interface models consider a thin transition layer where the properties of the two phases change smoothly. This approach is particularly well-suited for capturing topological changes of the interface, such as droplet coalescence or breakup, which are common in many ferrofluid applications.
 
 This paper focuses on the formulation and analysis of a two-phase flow model of ferrofluids based on a simplified version of the Rosensweig and Shliomis mathematical model for ferrofluids  within a diffuse interface framework. The model couples the Navier-Stokes equations, which govern the fluid flow, with a Cahn-Hilliard type equation to describe the evolution of the phase-field variable that distinguishes the ferrofluid from the ambient fluid. Furthermore, an equation for the evolution of the magnetization within the ferrofluid is included to account for the magnetic body forces. 
 This results in the following  diffuse interface ferrofluids model:
\begin{subequations}\label{eq:CHF}
\begin{align}
    \bu_t+(\bu\cdot\Grad)\bu+\Grad p&=\Div(\nu_\theta T(\bu))+\mu_0(\bm\cdot\Grad)\bh-\eps\lambda\Delta\theta\Grad\theta,\label{eq:nse1}\\
    \Div\bu&=0,\\
    \bm_t+(\bu\cdot\Grad)\bm&=-\frac{1}{\tau}(\bm-\chi_\theta\bh),\label{eq:eqm}\\
    \Curl\bh &=0,\quad \Div(\bm+\bh)=0,\label{eq:maxwell1}\\
    \theta_t+(\bu\cdot\Grad)\theta&=\kappa\Delta\psi,\label{eq:phasefield1}\\
    \psi&= -\eps\lambda\Delta\theta+\frac{\lambda}{\eps}F'(\theta)-\frac{\mu_0}{2\chi_\theta^2}\chi'_\theta|\bm|^2.\label{eq:psi}
\end{align}
\end{subequations}
Here $\bu$ is the velocity of the fluid, $\bm$ is the magnetization, $\bh$ is the magnetic field and $\theta$ is the phase field variable which is equal to $1$ when in the magnetic fluid and equal to $-1$ when in the non-magnetic fluid. $\psi$ is called the chemical potential in this setting. A formal derivation of the system~\eqref{eq:CHF}, based on Onsager's variational principle~\cite{Onsager1931,Onsager1931b}, is included in Appendix \ref{sec:derivation}. 

We continue to explain our notation in further detail. First,
\begin{equation}
	\label{eq:symgradient}
	T(\bu):= \frac{\Grad \bu + (\Grad \bu)^\top}{2},
\end{equation}
is the symmetric part of the gradient. 
The function $F$ is a truncated double well potential, we choose
\begin{equation*}
	F(\theta)= \begin{cases}
		(\theta+1)^2,\quad \theta\in (-\infty,-1),\\
		\frac{1}{4}(1-\theta^2)^2,\quad \theta\in [-1,1],\\
		(\theta-1)^2,\quad \theta\in (1,\infty).
	\end{cases}
\end{equation*}
Clearly, $F$ satisfies
\begin{equation}\label{eq:Fass}
	|F'(\theta)|\leq 2|\theta|+1,\quad |F''(\theta)|\leq 2.
\end{equation}
The functions $\nu_\theta$ and $\chi_\theta$ are the viscosity and susceptibility depending on the phase field variable $\theta$. We assume they are Lipschitz continuous and satisfy,
\begin{equation}\label{eq:viscosity}
	0<\min\{\nu_f,\nu_w\}\leq \nu_\theta\leq \max\{\nu_f,\nu_w\},\quad 0\leq \chi_\theta\leq \chi_0,
\end{equation}
where $\nu_f$ is the viscosity of the ferrofluid and $\nu_w$ the viscosity of the non-magnetic phase and $\chi_0$ the susceptibility of the ferrofluid. We set the susceptiblity of the non-magnetic phase to be zero. They can be chosen in various ways, for example as 
\begin{equation}\label{eq:chidef}
	\nu_\theta= \nu_w +(\nu_f-\nu_w)\mathcal{H}(\theta/\eps),\quad \chi_\theta = \chi_0\mathcal{H}(\theta/\eps),
\end{equation}
 where $\mathcal{H}$ is a regularization of the Heavyside function. To be definite, we will focus on the following choice:
\begin{equation}\label{eq:heavyside}
\mathcal{H}(x) = \frac{1}{\pi} \arctan(x) + \frac 12.
\end{equation}
We point out that our analysis only uses certain decay properties of the tails of $\mathcal{H}(x)$ and $\mathcal{H}'(x)$ as $x\to \pm\infty$. Various other choices for $\mathcal{H}$ are also possible.

The parameters
 $\mu_0,\tau,\varepsilon,\lambda,\kappa >0$ are fixed coefficients: $\tau$ is the relaxation time, $\lambda$ the surface tension, $\varepsilon$ is the interface thickness, $\kappa$ is the mobility, and $\mu_0$ is the magnetic constant.
We assume that the domain $\dom$ is bounded and has a smooth boundary. Additionally, we assume that the domain is simply-connected, in which case we obtain from~\eqref{eq:maxwell1} that $\bh$ is the gradient of a potential, i.e.,  $\bh = \Grad\varphi$ and so~\eqref{eq:maxwell1} becomes
\begin{equation}
	\label{eq:elliptic}
	\bh = \Grad \varphi,\quad -\Delta \varphi = \Div\bm.
\end{equation}
As in~\cite{Nochetto2019,Nochetto2016b,Nochetto2016}, we assume that $\bh$ is composed of a demagnetizing field $\bh_d$ and an applied field $\bh_a$,
\begin{equation}
	\label{eq:decomph}
	\bh = \bh_d+\bh_a,
\end{equation}
where $\bh_a$ is a given smooth harmonic field (not perturbed through $\bh_d$ and $\bm$) and that $\bm=0$ in $\R^d\backslash \dom$ and $\bh_d\equiv 0$ in $\R^d\backslash\dom$. Assuming additionally that the wall of the domain is impermeable, this results in the following boundary conditions:
\begin{subequations}
    \label{eq:boundaryconditionsCahnHilliard}
    \begin{align}
        &\bu=0,\quad x\in\partial\dom,\, t\geq 0,\\
        &	\frac{\partial\varphi}{\partial n} = (\bh_a - \bm)\cdot \mathbf{n},\quad \int_{\dom} \varphi(t,x) dx =0,\quad x\in\partial\dom,\, t\geq 0, \label{eq:bcelliptic}\\
        & \Grad\theta\cdot\normal =0,\quad x\in\partial\dom,\, t\geq 0,\\
        & \Grad\psi\cdot \normal =0,\quad x\in\partial\dom,\, t\geq 0.
    \end{align}
\end{subequations}
The second boundary condition~\eqref{eq:bcelliptic} is equivalent to
\begin{equation*}
  (\bm+\bh)\cdot\normal = \bh_a\cdot\normal,\quad x\in \partial\dom, \, t\geq 0.
\end{equation*}
This is the model derived and studied in~\cite{Nochetto2016b} with the exception of the last term on the right hand side in the equation for $\psi$. We will show in the following sketch of a derivation of the system in Appendix~\ref{sec:derivation} how it emerges. In contrast to~\cite{Nochetto2016b}, we chose to include it here, because we believe it is necessary for the study of the zero relaxation limit $\tau\to 0$. The zero relaxation limit has practical relevance since in applications $\tau$ is often small, for example~\cite{Rinaldi2002,Shliomis2002} states that it is of order $10^{-9} - 10^{-4}s$. Furthermore, the zero relaxation system, the so-called quasi-equilibrium, is simpler and therefore easier to simulate numerically. Without the additional term, the formal energy balance for the system~\eqref{eq:CHF}
	\begin{equation}\label{eq:ourenergybalance} 
	\int_{\dom} E(\sol)(t) dx +\int_0^t\int_{\dom} \mathcal{D}(\sol)(s)\,dx ds= \int_{\dom} E(\sol)(0) dx +\mu_0 \int_0^t\int_{\partial\dom}\partial_s\bh_a\cdot\varphi \,dSds.
\end{equation}
where the energy $E$ is defined by
\begin{equation}\label{eq:ourenergydensity} 
	E(\sol) = \frac{1}{2}\left(|\bu|^2 +\varepsilon\lambda|\Grad\theta|^2+ \frac{\mu_0}{\chi_\theta}|\bm|^2+\mu_0|\bh|^2+\frac{2\lambda}{\varepsilon}F(\theta)\right),
\end{equation}
and the dissipation functional $\mathcal{D}$ is defined by
\begin{equation*}
	\mathcal{D}(\sol) = \left(\nu_\theta |T(\bu)|^2 +\kappa|\Grad\psi|^2+\frac{\mu_0}{\tau \chi_\theta}|\bm-\chi_\theta\bh|^2 \right),
\end{equation*}
does not include the relaxation term $\int \frac{\mu_0}{\tau \chi_\theta}|\bm-\chi_\theta\bh|^2  dx$ which is needed to send $\tau\to 0$ in a stable manner and conclude that $\bm = \chi_\theta\bh$, i.e., the magnetization becomes parallel to the magnetic field when $\tau=0$, c.f. Section~\ref{sec:relaxation}. In contrast, the only previously known energy balance from~\cite[Proposition 3.1]{Nochetto2016b} is of the form
		\begin{equation}\label{eq:nochettoenergy} 
		\int_{\dom} \widetilde{E}(\sol)(t) dx +\int_0^t\int_{\dom} \widetilde{\mathcal{D}}(\sol)(s)\,dx ds\leq  \int_{\dom} \widetilde{E}(\sol)(0) dx+\mu_0 \int_0^t \left(\tau \norm{\partial_t \bh_a(s)}_{L^2}^2 + \frac{1}{\tau}\norm{\bh_a(s)_{L^2}^2}\right)ds,
	\end{equation}
 where $\widetilde{E}$ and $\widetilde{\mathcal{D}}$ are given by
 \begin{equation*} 
 	\widetilde{E}(\sol) = \frac{1}{2}\left(|\bu|^2 +\lambda|\Grad\theta|^2+ {\mu_0}|\bm|^2+\mu_0|\bh|^2+\frac{2\lambda}{\varepsilon^2}F(\theta)\right),
 \end{equation*}
and
 \begin{equation*}
 	\widetilde{\mathcal{D}}(\sol) =  \nu_\theta |T(\bu)|^2 +\frac{\kappa\lambda}{\varepsilon}|\Grad\psi|^2+\frac{\mu_0}{\tau }\left(1-\frac{\chi_0}{4}\right)|\bm|^2+\frac{\mu_0}{2\tau}|\bh|^2,
 \end{equation*}
which would require additional assumptions on $\bh_a$ in order to send $\tau\to 0$ due to the different source term on the right hand side of~\eqref{eq:nochettoenergy}.
 We also note that the division by $\chi_\theta$ in the magnetization energy density in~\eqref{eq:ourenergydensity} heuristically forces $\bm$ to be small when $\chi_{\theta}$ is small as it is the case if the fluid is mostly consisting of the non-magnetic phase. However, the additional term
\begin{equation*}
	\frac{\mu_0}{2\chi_\theta^2}\chi'_\theta|\bm|^2
\end{equation*}
causes substantial challenges in the analysis of the system~\eqref{eq:CHF} due to its nonlinearlity. The main purpose of this work is to overcome these technical challenges. As a result, we prove the existence of global weak solutions of the system~\eqref{eq:CHF} using a Galerkin approximation scheme. We first approximate $\chi_\theta$ by a nonvanishing function $\chi_\theta^\delta$. Then we introduce three Galerkin truncations for $\bu$, $\theta$ and $\bm$ respectively that we send to infinity at different points in the approximation scheme, which is necessary to handle the nonlinearities of the system and obtain an approximate energy balance similar to the one expected for the system,~\eqref{eq:ourenergybalance}. Finally, we send the approximation $\delta>0$ in the susceptibility, $\chi_\theta$ to zero to obtain the one in~\eqref{eq:chidef}, \eqref{eq:heavyside}. In order to deal with the weak a priori regularity for the variables $\bm$ and $\bh$ expected from the energy balance, i.e., $\bm,\bh\in L^\infty(0,T;L^2(\dom))$ and derive compactness of the approximating sequences, we adapt an argument from~\cite{Nochetto2019} using renormalized solutions to our setting to obtain compactness of those approximating sequences in $L^2([0,T]\times\dom)$. We use a similar argument to prove convergence of solutions of~\eqref{eq:CHF} to solutions of the zero relaxation system, which results when setting $\tau=0$ formally in~\eqref{eq:CHF}.

\begin{remark}
	One could also consider the Allen-Cahn version of the model~\eqref{eq:CHF} above:
	\begin{subequations}\label{eq:ACF}
		\begin{align}
			\bu_t+(\bu\cdot\Grad)\bu+\Grad p&=\Div(\nu_\theta T(\bu))+\mu_0(\bm\cdot\Grad)\bh-\eps\lambda\Delta\theta\Grad\theta,\label{eq:nse2}\\
			\Div\bu&=0,\\
			\bm_t+(\bu\cdot\Grad)\bm&=-\frac{1}{\tau}(\bm-\chi_\theta\bh),\\
			\Curl\bh &=0,\quad \Div(\bm+\bh)=0,\label{eq:maxwell2}\\
			\theta_t+(\bu\cdot\Grad)\theta&=-\kappa\psi,\label{eq:phasefield2}\\
			\psi&= -\lambda\eps\Delta\theta+\frac{\lambda}{\eps}F'(\theta)-\frac{\mu_0}{2\chi_\theta^2}\chi'_\theta|\bm|^2.
		\end{align}
	\end{subequations}
	We believe that the results of this article carry over to this system of equations also.
\end{remark}

\subsection{Related works}
As already mentioned above, the work most closely related to this is~\cite{Nochetto2016b} where the diffuse interface model for ferrofluids is derived (with the exception of the last term in equation~\eqref{eq:psi}, as explained above). The same article proposes a stable fully discrete numerical scheme and simulations of the Rosensweig instability. Furthermore, they present a convergence proof for the case that $\bh$ is a given function (i.e. remove equation~\eqref{eq:maxwell1} above), which implies existence of global weak solutions for that case. Later works are concerned with the design of linearly-implicit energy stable schemes for similar two-phase flow models~\cite{Keram2025,Zhang2021,Zhang2024}, however, these works do not provide convergence proofs for their schemes and therefore do not yield proofs of existence of weak solutions of the corresponding models.
Other mathematical works are dedicated to single-phase flows of ferrofluids. In~\cite{Nochetto2019} global existence of weak solutions and relaxation to the equilibrium is shown for the single-phase Rosensweig model of ferrohydrodynamics.  In~\cite{Nochetto2016}, a stable numerical scheme is designed for the single-phase flow Rosensweig system. Other analysis works such as~\cite{Amirat2008,Scrobogna2019} consider regularized systems where an additional diffusion term is added in the magnetization equation~\eqref{eq:eqm}. However, it is unclear if this term can be justified from the physics of ferrofluids and it does not appear in the original works by Rosensweig and Shliomis either. The existence of local-in-time strong solutions of the single-phase Shliomis model without additional regularization was proved in~\cite{Amirat2009} and the existence of local-in-time strong solutions of the single-phase Rosensweig model was proved in~\cite{Amirat2010}.
Therefore, we believe our work is the first rigorous study of global weak solutions for a two-phase flow ferrofluid model without additional regularization terms. 
\subsection{Outline of this article}
The rest of this article is organized as follows: In Section~\ref{S2}, we present the definition of weak solutions, the main results and some auxiliary identities and preliminary results. In Section~\ref{sec:existenceproof}, the existence of global weak solutions is proved. In Section~\ref{sec:relaxation}, we send the relaxation time $\tau\to 0$ and prove the convergence of weak solutions of~\eqref{eq:CHF} to weak solutions of the limiting system. We end with an appendix containing the formal derivation of the system~\eqref{eq:CHF} and a real analysis result.

\section{Main results}\label{S2}
\subsection{Notation and definition of global weak solutions}
We start by introducing necessary notation: We denote by $\hdiv$ the space of divergence free $L^2(\dom)$ functions and by $\V$ the space of divergence free functions in $H^1_0(\dom)$ (these can be obtained as the closures of $C^\infty_0(\dom)\cap \{\Div u=0\}$ in $L^2(\dom)$ and $H^1_0(\dom)$ respectively (c.f.~\cite{Temam2001}):
\begin{equation*}
	\begin{split}
		\hdiv &= \{u\in L^2(\dom);\, \Div u = 0,\,  \text{ a.e. in }\, \dom,\, u\cdot n|_{\partial \dom}=0\}, \\
		\V &=\{u\in H^1(\dom);\, \Div u = 0,\,  \text{ a.e. in }\, \dom,\, u|_{\partial \dom}=0\}. 
	\end{split}
\end{equation*}
For a Banach space $X$ we let $C_w(0,T;X)$ be the space of functions that are weakly continuous in time, that is, if $v\in C_w(0,T;X)$, then for any $s\to t$,
\begin{equation*}
	\langle v(s),g\rangle\to \langle v(t), g\rangle,\quad \text{for } s\to t, \quad \forall \, g\in X^*,
\end{equation*}
where we denoted by $X^*$ the dual space of $X$ and by $\langle\cdot,\cdot\rangle$ the dual product on $X,X^*$.
We use $(\cdot,\cdot)$ to denote the $L^2(\dom)$-inner product,
\begin{equation*}
	(f,g):= \int_{\dom} f(x)\cdot g(x) dx.
\end{equation*}
Now we can define a notion of weak solution for the diffuse interface ferrofluids system~\eqref{eq:CHF}:
\begin{definition}[Distributional solution of~\eqref{eq:CHF}]
	\label{def:weaksol}
	Let $T>0$, $\dom\subset\R^d$, $d=2,3$, a smooth, simply connected domain.   $\sol:=(\bu,\theta,\bm,\bh,\psi)$ is a global weak solution of the system \eqref{eq:CHF} if the following conditions are satisfied:
	\begin{enumerate}[label=(\roman*)]
		\item {\textbf{Regularity.}} The solution $\sol:=(\bu,\theta,\bm,\bh,\psi)$ satisfies the regularity requirements:
		\begin{align*}
			\bu&\in L^\infty(0,T;\hdiv)\cap L^2(0,T;\V)\cap C_w(0,T;L^2(\dom))\\
			\theta & \in L^\infty(0,T;H^1(\dom)) \cap C(0,T;L^2(\dom))\\
			\bm & \in L^\infty(0,T;L^2(\dom))\cap C_w(0,T;L^2(\dom))\\
			\bh & \in L^\infty(0,T;L^2(\dom))\cap C_w(0,T;L^2(\dom))\\
			\psi & \in L^2(0,T;H^1(\dom));
		\end{align*}
		\item \textbf{Magnetostatics equations.} The function $\bh$ is such that $\bh=\Grad \varphi$, where $\varphi\in L^\infty(0,T;H^1(\dom))$ and satisfies for all $\phi\in H^1(\dom)$, and a.e. $t\in [0,T]$,
		\begin{equation}\label{eq:ellipticeqforvphi}
			\int_{\dom} (\Grad\varphi+\bm) \cdot\Grad\phi dx =\int_{\partial\dom} \bh_a\cdot\mathbf{n}  \phi\, dS,
		\end{equation}
		with $\int_{\dom} \varphi dx =0$. 
		\item \textbf{Weak form of evolution equations.} Equations \eqref{eq:nse1}--\eqref{eq:psi} hold weakly, that is for any test functions $v_i\in C^1_c([0,T)\times\dom)$ $i=1,\dots,5$ with vanishing trace on $\partial\dom$ and $\Div v_1(t,x) = 0$ for all $(t,x)\in[0,T]\times\dom$, 
		\begin{align}
			&\lint \left[\bu\cdot\partial_tv_1+((\bu\cdot\Grad)v_1)\cdot\bu -\nu_\theta T(\bu):T (v_1) \right]dx dt + \int_{\dom} \bu_0\cdot v_1(0,x) dx\label{eq:nseweak}\\ 
			&\qquad=\lint\left[ \mu_0\left(\left(((\bm+\bh)\cdot\Grad)v_1\right)\cdot\bh \right) - \varepsilon \lambda(\Grad\theta\otimes\Grad\theta):\Grad v_1\right] dx dt\notag\\
			&\int_{\dom}\bu(t,x)\cdot\Grad v_2(t,x) dx = 0,\quad \text{a.e. } t\in [0,T]\label{eq:nsedivconstraint}\\
			&\lint \left[\bm\cdot\partial_t v_3+(\bu\cdot\Grad) v_3\cdot\bm \right]dx dt + \int_{\dom} \bm_0\cdot v_3(0,x) dx=\lint\left[\frac{1}{\tau}(\bm-\kappa_\theta\bh)\cdot v_3\right] dx dt\label{eq:mweak}\\
			&\lint \left[ \theta\cdot \partial_t v_4 + (\bu\cdot \Grad)v_4\cdot \theta\right] dx dt +\int_{\dom}\theta_0 v_4(0,x)dx = \kappa \lint  \Grad\psi\cdot \Grad v_4  dx dt\\
			&\lint \psi v_5 dx dt = \lint \left[\varepsilon \lambda\Grad\theta\cdot \Grad v_5 + \frac{\lambda}{\varepsilon} F'(\theta)v_5- \frac{\mu_0}{2\chi_\theta^2}\chi_\theta' |\bm|^2 v_5\right] dx dt.
		\end{align}
		where $\bu_0\in\hdiv$, $\bm_0\in L^2(\dom)$ and $\theta_0\in H^1(\dom)$ are the initial conditions;
		
		\item {\textbf{Energy balance.}} The solution $\sol:=(\bu,\theta,\bm,\bh,\psi)$ satisfies the following energy inequality,
		\begin{equation}\label{eq:energy0}
			\int_{\dom} E(\sol)(t) dx +\int_0^t\int_{\dom} \mathcal{D}(\sol)(s)\,dx ds\leq \int_{\dom} E(\sol)(0) dx +\mu_0 \int_0^t\int_{\partial\dom}\partial_s\bh_a\cdot\varphi \,dSds.
		\end{equation} 
		where the energy density $E$ is defined by
		\begin{equation}\label{eq:defE}
			E(\sol) = \frac{1}{2}\left(|\bu|^2 +\varepsilon\lambda|\Grad\theta|^2+ \frac{\mu_0}{\chi_\theta}|\bm|^2+\mu_0|\bh|^2+\frac{2\lambda}{\varepsilon}F(\theta)\right),
		\end{equation}
		and the dissipation functional $\mathcal{D}$ is defined by
		\begin{equation}\label{eq:defD}
			\mathcal{D}(\sol) = \left(\nu_\theta |T(\bu)|^2 +\kappa|\Grad\psi|^2+\frac{\mu_0}{\tau \chi_\theta}|\bm-\chi_\theta\bh|^2 \right).
		\end{equation}
	\end{enumerate}
\end{definition}

\subsection{Main theorems}
The main result of this work is the following existence theorem for global weak solutions to the system \eqref{eq:CHF}: 
\begin{restatable}{mainthm}{existence}\label{thm:existence}
	Let $T>0$ be a given final time. Assume that $\bu_0\in \hdiv$, $\theta_0\in H^1(\dom)$, $\bm_0\in L^p(\dom)$ for $p>12/5$ and $\bh_a\in W^{1,\infty}(0,T;L^2(\partial\dom))$. Assume furthermore that the viscosity $\nu_\theta$ satisfies~\eqref{eq:viscosity} and the susceptibility $\chi_\theta$ is given by~\eqref{eq:chidef}. Assume that $F$ is smooth and that $F'$ grows at most linearly for large $\theta$, i.e., $|F'(\theta)|\leq C(1+|\theta|)$ for some $C>0$. Then there exists a weak solution of~\eqref{eq:CHF}, in the sense of Definition~\ref{def:weaksol}.
\end{restatable}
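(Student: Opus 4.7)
The plan is to construct weak solutions via a multi-level Galerkin scheme combined with a regularization of the susceptibility that enforces strict positivity, following the outline sketched at the end of the introduction. Four independent parameters are in play: a regularization $\delta>0$ replacing $\chi_\theta$ with a positive function $\chi_\theta^\delta\geq \delta$ (for example $\chi_\theta^\delta = \chi_\theta+\delta$), and three Galerkin indices $N_u$, $N_\theta$, $N_m$ for the velocity, phase-field and magnetization equations, respectively. The reason for maintaining three separate truncations is that the non-standard term $\tfrac{\mu_0}{2\chi_\theta^2}\chi_\theta'|\bm|^2$ in \eqref{eq:psi} couples $\bm$ to $\theta$ in a way which, at the fully discrete level, only produces the delicate cancellations needed for the energy balance \eqref{eq:ourenergybalance} if the different truncations are removed in the right order. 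For each fixed quadruple $(\delta,N_u,N_\theta,N_m)$ the magnetostatic relation \eqref{eq:elliptic} determines $\bh$ as an affine function of $\bm$ and $\bh_a$, so the coupled system reduces to an ODE in the Galerkin coefficients of $(\bu,\theta,\bm)$. Local existence follows from the Picard--Lindelöf theorem, and the approximate energy balance then yields global existence on $[0,T]$.

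Next, I would establish an energy estimate at the discrete level which mimics \eqref{eq:ourenergybalance}: test the discrete momentum equation with $\bu^{N_u,N_\theta,N_m}$, the magnetization equation with $\mu_0\bm^{N_u,N_\theta,N_m}/\chi_\theta^\delta$ projected onto the appropriate Galerkin space, the phase-field equation with $\psi^{N_u,N_\theta,N_m}$, the chemical-potential equation with $\theta_t^{N_u,N_\theta,N_m}$, and the magnetostatics equation with $\mu_0\varphi^{N_u,N_\theta,N_m}$. The crux is the algebraic cancellation between the Kelvin force $\mu_0(\bm\cdot\Grad)\bh$, the advection of $\mu_0|\bm|^2/\chi_\theta^\delta$ by $\bu$, and the contribution of $\tfrac{\mu_0}{2(\chi_\theta^\delta)^2}(\chi_\theta^\delta)'|\bm|^2$ to the chemical potential, which is where the paper's particular form of $\psi$ plays a decisive role. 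The uniform bounds in $\delta,N_u,N_\theta,N_m$ that emerge read as $\bu \in L^\infty(L^2)\cap L^2(H^1)$, $\theta\in L^\infty(H^1)$, $\psi\in L^2(H^1)$, $\bm/\sqrt{\chi_\theta^\delta}\in L^\infty(L^2)$, $\bh\in L^\infty(L^2)$, plus the dissipative control $\|\bm-\chi_\theta^\delta\bh\|_{L^2}$.

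The limits are then taken successively. I would first send $N_m\to\infty$ with $(N_u,N_\theta,\delta)$ fixed: here $\bu^{N_u,N_\theta}$ and $\theta^{N_u,N_\theta}$ are smooth in the spatial variable, so the magnetization equation reads as a linear transport equation with a smooth velocity and smooth relaxation source, and DiPerna--Lions renormalization yields an evolution equation for $|\bm^{N_u,N_\theta,N_m}|^2$. Passing to the limit in this equation and comparing with the equation satisfied by $|\bm^{N_u,N_\theta}|^2$ gives strong $L^2$-convergence of $\bm^{N_u,N_\theta,N_m}$, which is enough to pass to the limit in all nonlinearities, in particular in $\tfrac{\mu_0}{2(\chi_\theta^\delta)^2}(\chi_\theta^\delta)'|\bm|^2$. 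Next I would send $N_\theta\to\infty$: the Cahn--Hilliard dissipation $\|\Grad\psi\|_{L^2}$ together with the bound $\|\theta\|_{H^1}$ gives, via Aubin--Lions, strong convergence of $\theta^{N_u,N_\theta}$ in $C(0,T;L^2)$, hence pointwise a.e., which allows passage to the limit in $\chi_\theta^\delta$, $(\chi_\theta^\delta)'$, $\nu_\theta$ and $F'(\theta)$. Then $N_u\to\infty$ is handled by a standard Aubin--Lions argument for the velocity, and finally $\delta\to 0$ is taken; the assumption $\bm_0\in L^p$ with $p>12/5$ provides higher integrability for $\bm$ that survives the limit and allows repeating the renormalized-solution argument to identify limits of $|\bm|^2/\chi_\theta^2$ on the (possibly vanishing) set where $\chi_\theta$ degenerates.

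The main obstacle, as anticipated in the introduction, is the passage to the limit in the quadratic magnetization term $\tfrac{\mu_0}{2\chi_\theta^2}\chi_\theta'|\bm|^2$ appearing in the chemical potential. Because the energy only controls $\bm$ in $L^\infty(L^2)$ and the magnetization equation has no diffusion, weak $L^2$-convergence of $\bm$ is not enough, and strong convergence has to be squeezed out of the transport structure via renormalized solutions. A secondary, but still substantial, technical difficulty is bookkeeping the three Galerkin parameters in the correct order: a premature passage $N_u\to\infty$ or $N_\theta\to\infty$ before $N_m\to\infty$ would destroy the smoothness of the transport velocity in the magnetization equation and break the renormalization argument. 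Choosing the order $N_m\to\infty$, then $N_\theta\to\infty$, then $N_u\to\infty$, then $\delta\to 0$ appears to be the only sequence compatible with both the renormalization step and the required energy cancellations.
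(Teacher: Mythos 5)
Your overall architecture matches the paper (regularize $\chi_\theta$ to be uniformly positive, three Galerkin levels removed in the order $N_m\to\infty$, $N_\theta\to\infty$, $N_u\to\infty$, then $\delta\to 0$, with renormalized solutions and the $L^{12/5+\alpha}$ integrability of $\bm_0$ reserved for the $\delta$-limit), but there is a genuine gap at the heart of your argument: you claim an energy estimate \emph{at the fully discrete level}, uniform in $\delta,N_u,N_\theta,N_m$, obtained by testing the discrete equations with projected versions of the nonlinear multipliers ($P^m_{N_m}(\mu_0\bm/\chi^\delta_\theta)$, $\theta_t$, etc.). This is exactly what does not work, and it is the reason the consecutive-limit strategy exists in the first place: the projections do not commute with the nonlinear operations, so the advection term $b_m(\bu,\bm,P^m_{N_m}(\bm/\chi^\delta_\theta))$ no longer produces the exact transport of $|\bm|^2/\chi^\delta_\theta$ needed to cancel against the $\partial_t(1/\chi_\theta)|\bm|^2$ and $\bu\cdot\Grad(1/\chi_\theta)|\bm|^2$ contributions coming from the $\psi$- and $\theta$-equations, and the leftover commutator/projection errors are not controlled by the quantities the energy itself bounds. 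In the paper the energy inequality is derived only \emph{after} $N_m\to\infty$, when the magnetization equation holds against arbitrary $L^2$ test functions so that $w=\mu_0(\chi_\theta^{-1}\bm-\bh)$ is admissible, and even then it carries $(I-P^\theta_{N_\theta})$ error terms that have to be estimated (using the $L^4$-in-$\bm$ and $H^2$-in-$\theta$ bounds obtained at that stage) and shown to vanish as $N_\theta\to\infty$. Global-in-time existence of the ODE system is correspondingly obtained from cruder, $N_u$- and $N_\theta$-dependent bounds, not from the energy balance. Your own remark that the cancellations appear only if the truncations are removed in the right order is in tension with the uniform discrete estimate you assert two sentences later.

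A second, related gap is your treatment of $N_u\to\infty$ as "a standard Aubin--Lions argument for the velocity." At that stage the only bounds uniform in $N_u$ are $\bm^{N_u},\bh^{N_u}\in L^\infty(0,T;L^2)$ and $\theta^{N_u}\in L^\infty(0,T;H^1)$ (the $H^1$- and $L^4$-bounds on $\bm$ and the $H^2$-bound on $\theta$ all blow up with $N_u$, since $\sigma=1/N_u$), so weak convergence alone does not let you pass to the limit in $((\bm+\bh)\cdot\Grad)v\cdot\bh$, in $\chi_\theta'|\bm|^2/\chi_\theta^2$, or in $\Grad\theta\otimes\Grad\theta$. This is precisely where the renormalized-solution machinery is needed (not at the $N_m$-stage, where uniform $H^1$-bounds on $\bm$ make plain Aubin--Lions sufficient): one uses the DiPerna--Lions identity for $\|\bm\|_{L^2}^2$ of the limit, compares with the limit of the approximate identities, and combines it with an identity for the weak limit of $\bm^{N_u}\cdot\bh^{N_u}$ obtained by testing the magnetostatic equation with $\varphi^{N_u}g$, to conclude $\overline{|\bm|^2}=|\bm|^2$ and hence strong $L^2$-convergence of $\bm^{N_u}$ and $\bh^{N_u}$; strong convergence of $\Grad\theta^{N_u}$ then requires a separate argument, testing the $\psi$-equation with $\theta^{N_u}$ and with $\theta$ and comparing, which your proposal does not address at all. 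Without these steps the quadratic terms in the momentum and chemical-potential equations cannot be identified in the limit, so the proof as proposed does not close.
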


We will prove this theorem using a sequence of approximation steps, detailed in Section \ref{sec:existenceproof}. First, since the equations as well as the energy involve divisions by $\chi_\theta$, c.f. equation~\eqref{eq:psi}, we regularize $\chi_\theta$ by a parameter $\delta$ to make it uniformly positive. Then we show existence of weak solutions under this uniform positivity assumption for $\chi_\theta$ using Galerkin truncations for the variables $\bu$, $\theta$ and $\bm$. 

The main technical difficulty is that, even though the continuous system \eqref{eq:CHF} formally satisfies the energy balance \eqref{eq:energy0} which would imply boundedness and compactness properties of the system, deriving this balance rigorously requires an exact cancellation of several highly nonlinear terms. We have not been able to find a discretization scheme which respects this structure, and hence no similar energy balance is available at the discretized level. 

To circumvent this core difficulty, we instead consecutively send our Galerkin truncations to infinity. Each time a limit is taken, this allows us to obtain improved estimates because one or more of the equations in the PDE system \eqref{eq:CHF} is now satisfied at the continuous level, implying exact cancellation of relevant nonlinear terms. We start with the limit for the approximation in $\bm$, based on which we derive an approximate energy inequality for the limiting system. In turn, this gives sufficient control to send the Galerkin truncation for $\theta$ to infinity. Finally, we pass to the limit in the approximation for $\bu$ to obtain a weak solution of the system in the case that $\chi_\theta$ is uniformly positive. This part of the argument goes through for $\bm_0\in L^2(\dom)$.

Section~\ref{sec:vanishing-mag} is then dedicated to sending the approximation parameter $\delta$ in $\chi_\theta$ to zero and obtain existence of a weak solution for $\chi_\theta$ given in~\eqref{eq:chidef},~\eqref{eq:heavyside}. It is only in this final part of the argument, we require the slightly stronger technical assumption that $\bm_0\in L^p(\dom)$ for some $p>12/5$, to deal with the potentially singular behavior of $1/\chi_\theta$, when $\theta \to -\infty$.

\subsubsection{Limit of zero relaxation time $\tau \to 0$.} In applications, the relaxation time $\tau$ is generally very small~\cite{Rinaldi2002,Shliomis2002}. Therefore, one might want to consider the equations~\eqref{eq:CHF} with $\tau=0$. 
Formally, one then obtains the system
\begin{subequations}\label{eq:CHFtau}
	\begin{align}
		\Bu_t+(\Bu\cdot\Grad)\Bu+\Grad P&=\Div(\nu_\Theta T(\Bu))+\mu_0(\Bm\cdot\Grad)\Bh-\eps\lambda\Delta\Theta\Grad\Theta,\label{eq:nse1T}\\
		\Div\Bu&=0,\\
		0&=\Bm-\chi_\Theta\Bh,\label{eq:eqmT}\\
		\Curl\Bh &=0,\quad \Div(\Bm+\Bh)=0,\label{eq:maxwellT}\\
		\Theta_t+(\Bu\cdot\Grad)\Theta&=\kappa\Delta\Psi,\label{eq:phasefield1T}\\
		\Psi&= -\eps\lambda\Delta\Theta+\frac{\lambda}{\eps}F'(\Theta)-\frac{\mu_0}{2 }\chi'_\Theta|\Bh|^2.\label{eq:psiT}
	\end{align}
\end{subequations}
Our second main result is a consequence of Theorem \ref{thm:existence}, and makes this limit mathematically rigorous. We denote the solution of~\eqref{eq:CHF} as $\mathcal{U}^\tau=(\but,\tht,\bmt,\bht,\psit)$ to make its dependence on $\tau$ explicit for the rest of this subsection. Then, we can prove:
\begin{restatable}{mainthm}{tauzerolimit}
	\label{thm:tauzerolimit}
	Assume that
$\bu_0\in \hdiv$, $\bm_0\in L^2(\dom)$, $\theta_0\in H^1(\dom)$ and $\bh_a\in W^{1,\infty}(0,T;L^2(\partial\dom))$. Assume furthermore that the viscosity $\nu_\theta$ satisfies~\eqref{eq:viscosity} and the susceptibility $\chi_\theta$ satisfies~\eqref{eq:assumptions}. Assume that $F$ is smooth and that $F'$ grows at most linearly for large $\theta$, i.e., $|F'(\theta)|\leq C(1+|\theta|)$ for some $C>0$. Then, as $\tau\to 0$ a subsequence of weak solutions $\mathcal{U}^\tau=(\but,\tht,\bmt,\bht,\psit)$ of~\eqref{eq:CHF} as in Definition~\ref{def:weaksol} converges to a weak solution of~\eqref{eq:CHFtau}.
For $\but$, $\bht$ and $\bmt$, the convergence is strong in $L^2([0,T]\times\dom)$, for $\tht$ the convergence is strong in $L^2(0,T;H^1(\dom))\cap C(0,T;L^2(\dom))$, and for $\psit$ the convergence is weak in $L^2(0,T;H^1(\dom))$.
\end{restatable}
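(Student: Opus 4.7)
The plan is to extract from the energy identity \eqref{eq:energy0}, which is uniform in $\tau$, enough compactness to pass to the limit in every equation of \eqref{eq:CHF}. The new ingredient compared to the existence proof is that the $\tau$-weighted dissipation $\frac{\mu_0}{\tau\chi_\theta}|\bm - \chi_\theta\bh|^2$ forces $\bmt - \chi_{\tht}\bht \to 0$ in $L^2$, which simultaneously identifies the limiting constitutive law $\Bm = \chi_\Theta \Bh$ and lets me replace the nonlinear source in the chemical potential by its quasi-equilibrium counterpart $\chi'_\Theta|\Bh|^2$.

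First I would read off from \eqref{eq:energy0}, using assumption \eqref{eq:assumptions} to bound $1/\chi_\theta$ and controlling the boundary term $\int_{\partial\dom}\partial_s\bh_a\cdot\varphi^\tau\,dS$ via the trace theorem applied to $\varphi^\tau$ (whose $H^1$-norm is controlled by \eqref{eq:ellipticeqforvphi}), the $\tau$-independent estimates $\but \in L^\infty(0,T;\hdiv)\cap L^2(0,T;\V)$, $\tht \in L^\infty(0,T;H^1)$, $\bmt,\bht \in L^\infty(0,T;L^2)$, $\psit \in L^2(0,T;H^1)$ (after subtracting a mean), plus the critical dissipation bound $\norm{\bmt - \chi_{\tht}\bht}_{L^2([0,T]\times\dom)}^2 \leq C\tau$. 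For $\but$ and $\tht$ the weak formulations supply uniform time-derivative estimates in a negative Sobolev space, and Aubin--Lions yields $\but \to \Bu$ strongly in $L^2([0,T]\times\dom)$ and $\tht \to \Theta$ strongly in $L^2(0,T;H^1)\cap C(0,T;L^2)$ along a subsequence. For $\bmt$ and $\bht$, which only enjoy $L^\infty(L^2)$ spatial control, I would reuse the renormalized-solutions argument developed in Section~\ref{sec:existenceproof} (following \cite{Nochetto2019}): the transport structure of \eqref{eq:eqm} coupled with the elliptic relation \eqref{eq:elliptic} upgrades weak to strong $L^2$ compactness, with $\tau$ now playing the role that $\delta$ plays in the existence proof.

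Given strong $L^2$ compactness, the dissipation bound combined with $\bht \to \Bh$ and $\chi_{\tht} \to \chi_\Theta$ a.e.\ (by continuity of $\chi_\theta$ and a.e.\ convergence of $\tht$) forces $\bmt \to \chi_\Theta\Bh$, and identifying this with the independent strong limit $\Bm$ gives exactly \eqref{eq:eqmT}. Passing to the limit in the remaining equations is then a weak-strong-product exercise: the convective terms $(\but \cdot \Grad)\but$ and $(\but\cdot\Grad)\tht$, the Maxwell force rewritten as in \eqref{eq:nseweak}, and the capillary term $\Grad\tht\otimes\Grad\tht$ all pose no difficulty. The one genuinely delicate point is the quadratic term in \eqref{eq:psi}, for which I would use the algebraic identity
\[
\frac{\chi'_{\tht}}{\chi_{\tht}^2}|\bmt|^2 = \chi'_{\tht}|\bht|^2 + \frac{\chi'_{\tht}}{\chi_{\tht}^2}\bigl(\bmt - \chi_{\tht}\bht\bigr)\cdot\bigl(\bmt + \chi_{\tht}\bht\bigr).
\]
The correction term is $O(\sqrt{\tau})$ in $L^1$ by the dissipation estimate, the uniform $L^2$-bounds on $\bmt,\bht$ and the lower bound on $\chi_\theta$ from \eqref{eq:assumptions}, while $\chi'_{\tht}|\bht|^2 \to \chi'_\Theta|\Bh|^2$ in $L^1$ by strong convergence of $\bht$ and continuity of $\chi'_\theta$. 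I expect the main obstacle to be precisely the strong $L^2$ compactness of $\bmt$ and $\bht$: the magnetization equation has no smoothing mechanism and $L^\infty(L^2)$ alone cannot rule out the concentration or oscillation phenomena that would spoil convergence of the quadratic term in \eqref{eq:psi}. The renormalized-solutions argument is what unlocks this step, and adapting it from the $\delta$-parametrized family of the existence proof to the present $\tau$-parametrized family is the technical heart of the theorem; everything else amounts to a direct stability analysis.
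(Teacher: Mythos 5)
Your overall skeleton (uniform energy bounds, the dissipation estimate $\norm{\bmt-\chi_{\tht}\bht}_{L^2([0,T]\times\dom)}^2\leq C\tau$, identification of $\Bm=\chi_\Theta\Bh$ by multiplying \eqref{eq:eqm} by $\tau$, and the algebraic splitting of $\chi'_{\tht}|\bmt|^2/\chi_{\tht}^2$) matches the paper, but the step you yourself flag as the technical heart is where your proposal breaks down. You propose to get strong $L^2$ compactness of $\bmt,\bht$ by adapting the renormalized-solutions argument of Section~\ref{sec:existenceproof} ``with $\tau$ playing the role of $\delta$.'' That adaptation does not carry over: in the existence proof the limit function still satisfies the transport--relaxation equation with the \emph{same fixed} $\tau$, so one can renormalize the limit equation and compare the identity \eqref{eq:midentity} with the limit of \eqref{eq:midentityapprox}. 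Here the limiting relation for $\Bm$ is purely algebraic, $\Bm=\chi_\Theta\Bh$ --- there is no limiting transport equation to renormalize --- and the renormalized identity at positive $\tau$ carries the factor $1/\tau$ in front of $\int(|\bmt|^2-\chi_{\tht}\bmt\cdot\bht)\,dx\,ds$, which the dissipation bound only controls by $C/\sqrt{\tau}$, so the comparison-of-identities trick gives nothing as $\tau\to 0$. The paper's actual mechanism is different: it tests the magnetostatics equation with the limiting potential $\Phi$ (with $\Grad\Phi=\Bh$) after inserting $\Bm=\chi_\Theta\Bh$, tests the $\tau$-level equation with $\pht$, uses the $O(\sqrt\tau)$ bound on $\bmt-\chi_{\tht}\bht$, the a.e.\ convergence of $\chi_{\tht}$, and Lemma~\ref{lem:kennethlemma} to conclude $\int(1+\chi_{\tht})|\bht-\Bh|^2\to 0$; strong convergence of $\bmt$ then follows from the dissipation bound, with no renormalization at all.

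A second gap: you claim Aubin--Lions gives $\tht\to\Theta$ strongly in $L^2(0,T;H^1(\dom))$ and then say the capillary term $\Grad\tht\otimes\Grad\tht$ ``poses no difficulty.'' The uniform bounds available here are only $\tht\in L^\infty(0,T;H^1(\dom))$ with $\partial_t\tht\in L^{4/3}(0,T;(H^1)^*)$; Aubin--Lions then yields compactness in $C(0,T;L^2(\dom))$, not in $L^2(0,T;H^1)$ (no uniform $H^2$ bound on $\tht$ survives to this stage). Strong convergence of $\Grad\tht$ must instead be proved by the norm-convergence argument through the $\psi$-equation (the analogue of \eqref{eq:psilim}--\eqref{eq:convofnorms}), which in turn requires the already-established strong convergence of $\bmt,\bht$ and a careful decomposition of the term $\bigl(\tfrac{\chi'_{\tht}}{\chi_{\tht}^2}|\bmt|^2,\tht\bigr)$ using \eqref{eq:tauest}, the bounds \eqref{eq:assonchi} on $\chi'_\theta\theta/\chi_\theta$, and Lemma~\ref{lem:kennethlemma}. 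Without this step the passage to the limit in the momentum equation is not justified, so your argument as written is incomplete at both of the places where the real work of the theorem is done.
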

We prove this result in Section~\ref{sec:vanishing-mag}. The proof uses some of the identities derived in the course of the proof of the existence Theorem~\ref{thm:existence}, as we have to pass to the limit in many of the same terms. The proof also relies on additional tools that were developed in the earlier work~\cite{Nochetto2019}.

\subsection{Useful identities and preliminary results}
Next, we state some useful identites and preliminary results which will be used in the proofs of Theorem~\ref{thm:existence} and~\ref{thm:tauzerolimit}.
\subsubsection{Energy inequality}
	Since $\bh_a$ is harmonic, we can rewrite the last term $\int_0^t\int_{\partial\dom} \partial_s \bh_a \cdot\varphi dS ds$ in the energy inequality~\eqref{eq:energy0} using Gauss' divergence theorem
	\begin{align*}
		\int_0^t\int_{\partial\dom} \partial_s \bh_a \cdot\varphi dS ds& = \int_0^t \int_{\dom} \Div (\partial_s \bh_a \varphi) dx ds\\
		&= \int_0^t\int_{\dom} (\Div(\partial_s \bh_a) \varphi + \partial_s \bh_a\cdot\Grad \varphi )dx ds \\
		&= \int_0^t \int_{\dom} \partial_s \bh_a \cdot\Grad\varphi dx ds \\
		&= \int_0^t\int_{\dom}\partial_s \bh_a \cdot\bh dx ds.
	\end{align*}
Then, we can use Young's inequality to bound the term as
\begin{equation*}
	\left| 	\int_0^t\int_{\partial\dom} \partial_s \bh_a \cdot\varphi dS ds  \right|\leq \frac{\mu_0}{4}\int_0^t\norm{\bh(s)}_{L^2}^2 ds +\frac{1}{\mu_0}\int_0^t \norm{\partial_s\bh_a(s)}_{L^2}^2 ds.
\end{equation*}
Next, we use
 Gr\"onwall's inequality in~\eqref{eq:energy0}, to obtain the regularity for the solution required in~\ref{def:weaksol} (i) after using that $\theta$ and $\psi$ have bounded mass and Korn's inequality~\cite{Korn1909} for the symmetric gradient. In the same way, we can rewrite the weak formulation~\eqref{eq:ellipticeqforvphi} as
 \begin{equation}
 	\label{eq:ellipticforvphi2}
 		\int_{\dom} (\Grad\varphi+\bm) \cdot\Grad\phi dx =\int_{ \dom} \bh_a\cdot \Grad \phi\, dx.
 \end{equation}

\begin{remark}\label{rem:energysmoothsol}
	For smooth solutions, the energy inequality~\eqref{eq:energy0} in fact becomes an equality. It can be obtained by taking the inner product of equation~\eqref{eq:nse1} with $\bu$, equation~\eqref{eq:eqm} with $(\frac{\mu_0}{\chi_\theta}\bm-\mu_0\bh)$ and the time-differentiated equation~\eqref{eq:maxwell1} with $-\mu_0\varphi$, equation~\eqref{eq:phasefield1} with $\psi$ and equation~\eqref{eq:psi1} with $-\theta_t$, adding all of them, integrating over $\dom$, and then integrating by parts where suitable.
\end{remark}

We will need the following auxiliary results that will be useful later in the proof of existence of weak solutions.

\subsubsection{Kelvin force}
The term $\mu_0 (\bm\cdot\Grad)\bh$ in~\eqref{eq:nse1}, ~\eqref{eq:nse2} is called Kelvin force. It can be rewritten as follows using equation~\eqref{eq:maxwell1}, 
\begin{lemma}\label{lem:integrableweaksol}\cite[Lemma 2.3]{Nochetto2019}
	Any sufficiently smooth solution of \eqref{eq:CHF} satisfies
	\begin{equation}\label{eq:kelvin}
		(\bm\cdot\Grad)\bh = \Div\left((\bm+\bh)\otimes \bh \right) -\frac{1}{2}\Grad \left(|\bh|^2\right).
	\end{equation}
\end{lemma}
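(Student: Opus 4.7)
The identity is a purely algebraic/differential consequence of the two Maxwell equations $\Curl \bh = 0$ and $\Div(\bm + \bh) = 0$, so the plan is simply to expand the right-hand side of \eqref{eq:kelvin} using standard vector calculus identities and then invoke these two constraints to eliminate the unwanted terms.

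\textbf{Steps.} First I would apply the product rule to the tensor divergence, namely
\begin{equation*}
\Div\bigl((\bm+\bh)\otimes \bh\bigr) = \bigl(\Div(\bm+\bh)\bigr)\,\bh + \bigl((\bm+\bh)\cdot\Grad\bigr)\bh,
\end{equation*}
with the convention $(\Div(\bu\otimes \bv))_j = \partial_i(u_i v_j)$ that is consistent with the usage in \eqref{eq:nseweak}. The first term on the right vanishes by the magnetostatic constraint $\Div(\bm+\bh) = 0$, leaving only $(\bm\cdot\Grad)\bh + (\bh\cdot\Grad)\bh$. Second, I would rewrite the self-advection term via the classical identity
\begin{equation*}
(\bh\cdot\Grad)\bh = \tfrac{1}{2}\Grad|\bh|^2 - \bh\times \Curl \bh,
\end{equation*}
and use $\Curl \bh = 0$ to drop the last term, yielding $(\bh\cdot\Grad)\bh = \tfrac{1}{2}\Grad|\bh|^2$. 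Combining these two reductions and rearranging gives \eqref{eq:kelvin}.

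\textbf{Obstacle.} There is no real obstacle; the statement is an identity for smooth solutions and amounts to three lines of vector calculus. The only point requiring minor care is bookkeeping of the divergence-of-tensor convention and making sure that the Maxwell constraints are invoked in the correct places. Since the lemma is stated for sufficiently smooth solutions, all manipulations above are justified pointwise, and no distributional or approximation argument is required.
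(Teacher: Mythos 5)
Your argument is correct and is essentially the standard computation behind this identity: the paper itself gives no proof but cites \cite[Lemma 2.3]{Nochetto2019}, whose argument is exactly the expansion $\Div\bigl((\bm+\bh)\otimes\bh\bigr)=(\Div(\bm+\bh))\bh+((\bm+\bh)\cdot\Grad)\bh$ combined with $\Div(\bm+\bh)=0$ and $(\bh\cdot\Grad)\bh=\tfrac12\Grad|\bh|^2-\bh\times\Curl\bh$ with $\Curl\bh=0$. Nothing further is needed.
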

The right hand side is the version that is used in the definition of weak solutions since the regularity of the weak solution does not allow for derivatives of $\bh$.
\subsubsection{Capillary force}
	The capillary force term $\varepsilon \Delta \theta\Grad\theta$ in~\eqref{eq:nse1} can be (formally) rewritten in several ways, using~\eqref{eq:psi}. First, we use chain rule to rewrite it as  
	\begin{align}\label{eq:capillary}
		\varepsilon\Delta \theta \Grad\theta & = \varepsilon \left(\sum_{j=1}^d \partial_{jj} \theta\partial_i\theta\right)_{i=1}^d\\
		& = \varepsilon  \left(\sum_{j=1}^d \left(\partial_j(\partial_j\theta\partial_i \theta) - \partial_j\theta\partial_j\partial_i \theta \right)\right)_{i=1}^d\notag\\
		& = \varepsilon  \left(\sum_{j=1}^d \left(\partial_j(\partial_j\theta\partial_i \theta) -\frac12 \partial_i |\partial_j\theta|^2  \right)\right)_{i=1}^d\notag\\
		&  = \varepsilon  \Div (\Grad\theta\otimes\Grad \theta) - \frac{1}{2}\Grad |\Grad\theta|^2. \notag
	\end{align}
	Testing this with a divergence free test function and integrating by parts, we obtain the expression used in~\eqref{eq:nseweak}. On the other hand, replacing $\Delta \theta$ by~\eqref{eq:psi}, we calculate
	\begin{align*}
		\varepsilon\Delta \theta \Grad\theta &= -\psi \Grad\theta + \frac{\lambda}{\varepsilon}F'(\theta)\Grad\theta -\frac{\mu_0}{2\chi_\theta^2}\chi_\theta' |\bm|^2 \Grad\theta\\
		& = -\Grad(\psi\theta)+\Grad\psi \theta +\frac{\lambda}{\varepsilon}\Grad F(\theta)+\frac{\mu_0}{2} |\bm|^2 \Grad\left(\frac{1}{\chi_\theta}\right).
	\end{align*}
	The first and the third term can be absorbed into the pressure since we are using divergence free test functions and so the capillary force term in~\eqref{eq:nse1} can also be replaced by
	\begin{equation*}
		\Grad\psi \theta  +\frac{\mu_0}{2} |\bm|^2 \Grad\left(\frac{1}{\chi_\theta}\right).
	\end{equation*}
	
\section{Existence of Weak Solutions: Proof of Theorem \ref{thm:existence}}
\label{sec:existenceproof}
We note that the equations \eqref{eq:CHF} involve division by $\chi_\theta$. To avoid potential singularities in our proof of Theorem \ref{thm:existence}, we will thus first consider a uniformly lower bounded approximation $\chi^\delta_\theta \ge \delta >0$ of $\chi_\theta$, and prove existence of weak solutions for fixed $\delta$. This is the subject of the next subsection, which will occupy most of this paper. Once existence is shown for $\chi^\delta_\theta$, we will then consider the limit $\delta \to 0$. Passage to $\delta \to 0$ is the subject of subsection \ref{sec:vanishing-mag}.

\subsection{Existence of global weak solutions for non-vanishing susceptibility}

In order to show existence of global weak solutions of~\eqref{eq:CHF}, we need to construct a suitable approximation scheme. To do so, let us assume that $\chi$ is smooth and satisfies the following properties:
\begin{equation}\label{eq:assumptions}
	0<\delta\leq \chi_\theta\leq \chi_0<\infty,\quad |\chi_\theta'|\leq C<\infty,\quad \lim_{\theta\to\pm\infty}\chi_\theta' = 0,\quad \lim_{\theta\to\pm\infty}|\theta\chi'_\theta|\leq C<\infty.
\end{equation}
For example, this is achieved by setting
\begin{equation}\label{eq:susceptibility}
	\chi_\theta = \delta+(\chi_0-\delta)\mathcal{H}(\theta/\eps),
\end{equation}
for some $0<\delta<\chi_0$ where $\mathcal{H}$ is our regularization of the Heavyside function~\eqref{eq:heavyside}.  For relevant estimates that imply \eqref{eq:assumptions}, we refer to the discussion following the statement of Proposition \ref{prop:existence}, below.  
\begin{proposition}\label{prop:existence}
	Let $T>0$ be a given final time. Assume that $\bu_0\in \hdiv$, $\bm_0\in L^2(\dom)$, $\theta_0\in H^1(\dom)$ and $\bh_a\in W^{1,\infty}(0,T;L^2(\partial\dom))$. Assume furthermore that the viscosity $\nu_\theta$ satisfies~\eqref{eq:viscosity} and the susceptibility $\chi_\theta$ satisfies~\eqref{eq:assumptions}. Assume that $F$ is smooth and that $F'$ grows at most linearly for large $\theta$, i.e., $|F'(\theta)|\leq C(1+|\theta|)$ for some $C>0$. Then, there exists a weak solution of~\eqref{eq:CHF} as in the sense of Definition~\ref{def:weaksol}.
\end{proposition}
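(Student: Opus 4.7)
The plan is to construct weak solutions via a Galerkin approximation with three independent truncation parameters $N_u, N_\theta, N_m$ for $\bu, \theta, \bm$, and to pass to the limit \emph{consecutively}—first $N_m\to\infty$, then $N_\theta\to\infty$, then $N_u\to\infty$. I would fix orthonormal bases of $\hdiv$, $H^1(\dom)$ and $L^2(\dom)$ (for instance Stokes eigenfunctions and Neumann-Laplacian eigenfunctions), project~\eqref{eq:nse1},~\eqref{eq:eqm},~\eqref{eq:phasefield1} and~\eqref{eq:psi} onto the corresponding finite-dimensional subspaces, and solve the magnetostatics equation $-\Delta\pha=\Div\bma$ with boundary data~\eqref{eq:bcelliptic} pointwise in time from $\bma$, producing $\bha=\Grad\pha$. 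Since $\chi_\theta\geq \delta>0$, the resulting ODE system has locally Lipschitz coefficients, so Picard gives a local solution, extended to $[0,T]$ once the a priori bounds below are established.

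The main obstacle is that the formal energy identity~\eqref{eq:ourenergybalance} cannot be reproduced at the fully discretized level: its derivation (Remark~\ref{rem:energysmoothsol}) requires testing~\eqref{eq:eqm} against $(\mu_0/\chi_\theta)\bm - \mu_0\bh$ and~\eqref{eq:phasefield1} against $\psi$, where $\psi$ itself contains $|\bm|^2$ through~\eqref{eq:psi}, and these functions do not lie in the respective Galerkin subspaces. The subtle cancellation turns on the interplay between the nonlinear term $-(\mu_0/(2\chi_\theta^2))\chi_\theta'|\bm|^2$ in $\psi$ and the time and advection derivatives of $1/\chi_\theta$ produced when one pairs the $\bm$-equation with $\bm/\chi_\theta$; for this cancellation to be exact, both the $\bm$-equation \emph{and} the $\theta$-equation must hold in the continuous (non-projected) sense, which dictates the order of the limits.

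I would send $N_m\to\infty$ first, at fixed $N_u, N_\theta$. A partial energy estimate (exploiting $\chi_\theta\geq\delta$) together with the elliptic bound $\|\bha\|_{L^2}\leq C(\|\bma\|_{L^2}+\|\bh_a\|_{L^2(\partial\dom)})$ gives $\bua, \bma, \bha\in L^\infty_t L^2_x$ and $\tha\in L^\infty_t H^1$, uniformly in $N_m$. Because $\bma$ only lies in $L^\infty_t L^2_x$, standard Aubin--Lions compactness is unavailable; here I would follow the renormalized-solutions strategy of~\cite{Nochetto2019}, using the spatial smoothness of the finite-dimensional $\bua$ to justify the chain rule applied to the $\bm$-equation against $\bma/\chi_\theta$, and extract strong $L^2$-convergence of $\bma$ (and thereby of $\bha$ through the elliptic equation). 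In the limit the $\bm$-equation holds continuously, so testing against $(\mu_0/\chi_\theta)\bmb - \mu_0\bhb$ is legitimate and delivers an energy inequality with residuals only from the $N_u, N_\theta$-truncations.

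The second limit $N_\theta\to\infty$ at fixed $N_u$ uses this improved energy inequality: together with elliptic regularity applied to~\eqref{eq:psi}, it produces enough control on $\thb$ and $\psib$ for Aubin--Lions to yield strong convergence of $\thb$ in $L^2_t H^1\cap C_t L^2$. The capillary/chemical-potential cancellation then becomes exact, upgrading the energy inequality once more. Finally, the passage $N_u\to\infty$ proceeds by standard Navier--Stokes compactness: strong $L^2$-convergence of $\buc$, combined with the strong convergence of $\bh, \bm, \theta$ already established, allows passage to the limit in the Kelvin force after rewriting it via Lemma~\ref{lem:integrableweaksol} as $\Div((\bm+\bh)\otimes\bh)-\tfrac{1}{2}\Grad|\bh|^2$, and analogously in the capillary term. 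I expect the heaviest step to be the renormalization that produces strong $L^2$-compactness of $\bm$ at the first passage; at fixed $\delta>0$ it is tractable through the uniform bound $1/\chi_\theta\leq 1/\delta$, and it is precisely this argument that will need to be sharpened in Section~\ref{sec:vanishing-mag} to handle the vanishing-$\delta$ limit.
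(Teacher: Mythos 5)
Your overall architecture matches the paper's (three Galerkin parameters, consecutive limits $N_m\to\infty$, $N_\theta\to\infty$, $N_u\to\infty$, with the order dictated by the energy cancellation), but you have located the hard compactness step at the wrong stage, and this leaves a genuine gap at the final passage. At the first limit $N_m\to\infty$ the renormalization machinery is not needed: since $N_u$ and $N_\theta$ are fixed, $\bua$ and $\tha$ live in finite-dimensional smooth spaces, so inverse estimates (plus the mollified source $(\chi_{\tha}\bha)_\eta$ and a smoothed initial datum $\bm_0^\sigma$, devices your plan omits) yield $N_m$-uniform bounds on $\bma$ in $L^\infty(0,T;H^1(\dom))$ and on $\partial_t\bma$ in $L^\infty(0,T;L^2(\dom))$, as in \eqref{eq:mgradbound}, so plain Aubin--Lions applies. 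These stronger bounds are not a luxury: the second stage tests the pointwise magnetization equation \eqref{eq:meqpointwise} with $|\bmb|^2\bmb$, $-\Delta\bmb$ and $\mu_0(\chi_{\thb}^{-1}\bmb-\bhb)$, which requires the limit $\bmb$ to be spatially smooth; with only the strong $L^2$ convergence your renormalization step would produce, and without the mollification of the source, these manipulations are not justified.

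The genuine gap is your claim that at the last passage $N_u\to\infty$ the strong convergences of $\bm$, $\bh$ and $\theta$ are ``already established.'' They are not: every bound obtained at the second stage carries a constant $C(N_u)$ (see \eqref{eq:uniforminallbutsigma}, \eqref{eq:mestimateindepofkappa}), and the only $N_u$-uniform information is the energy-level bound \eqref{eq:alluniformbounds}, i.e.\ $\bm^{N_u},\bh^{N_u},\Grad\theta^{N_u}\in L^\infty(0,T;L^2(\dom))$. Hence it is precisely at this stage that one needs (a) the renormalized-solution identity \eqref{eq:midentity} for the limit $\bm$, compared with \eqref{eq:midentityapprox}, together with the elliptic-equation identity \eqref{eq:hidentity}--\eqref{eq:hidentitycombined} that identifies $\overline{\bm\cdot\bh}$ in terms of $\overline{|\bh|^2}$, to conclude strong $L^2([0,T]\times\dom)$ convergence of $\bm^{N_u}$ and $\bh^{N_u}$; and (b) a separate argument for strong convergence of $\Grad\theta^{N_u}$, obtained by testing the $\psi$-equation with $\theta^{N_u}$ and with $\theta$ and comparing weak limits as in \eqref{eq:psilim}--\eqref{eq:convofnorms} (this uses the uniform bounds on $\chi_\theta'\theta/\chi_\theta^2$ from \eqref{eq:assumptions}). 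Without (b) you cannot pass to the limit in the capillary term $\varepsilon\lambda(\Grad\theta^{N_u}\otimes\Grad\theta^{N_u},\Grad v)$, since Aubin--Lions only gives $\theta^{N_u}\to\theta$ in $C(0,T;L^2(\dom))$ and weak-$*$ convergence of the gradients; your phrase ``analogously in the capillary term'' hides exactly this missing argument. So the tool you name (the Nochetto et al.\ renormalization) is the right one, but it must be deployed at $N_u\to\infty$ (and again at $\delta\to0$, $\tau\to0$), and it must be supplemented by the $\overline{|\Grad\theta|^2}$ identification to handle the phase-field nonlinearity.
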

The only difference between Proposition \ref{prop:existence} and Theorem \ref{thm:existence} are the uniform bounds \eqref{eq:assumptions}.
We will prove this proposition using a sequence of approximation steps in the next couple of subsections. Before coming to the proof, we briefly discuss the relevance of the properties \eqref{eq:assumptions}.

\subsubsection{Estimates for the Heavyside approximation $\mathcal{H}$}%
We now consider our prototypical choice \eqref{eq:heavyside}, i.e., $\mathcal{H}(x) = \frac{1}{\pi} \arctan(x) + \frac{1}{2}$,
with the aim of deriving relevant estimates on $\chi^\delta_\theta = \delta  + (\chi_0 - \delta) \mathcal{H}(\theta/\epsilon)$.  Then we have:
\begin{lemma}
\label{lem:heavyside}
Consider $\chi_\theta = \chi^\delta_\theta$, given by \eqref{eq:susceptibility}. There exists a constant $C>0$, independent of $\delta>0$, such that the properties in \eqref{eq:assumptions} hold. Furthermore, $\chi_\theta = \chi^\delta_\theta$ also satisfies the following bounds,
\begin{equation}\label{eq:heavysidelemma}
\left| \frac{\chi_\theta'}{\chi_{\theta}}\right|\leq C, 
\qquad  \left| \frac{\chi_\theta' \theta}{\chi_{\theta}}\right|\leq C,
\quad \left|\frac{\chi_\theta'}{\chi_{\theta}^2}\right|\leq C \quad \forall  \theta\in\R.
\end{equation}
uniformly in $\delta>0$.
\end{lemma}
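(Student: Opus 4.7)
The plan is to substitute the explicit $\mathcal{H}$ from \eqref{eq:heavyside} and reduce every bound to a $\delta$-independent inequality on the dimensionless functions $\mathcal{H}(x)$ and $\mathcal{H}'(x)$. Since $\mathcal{H}(x) = \pi^{-1}\arctan(x)+1/2 \in (0,1)$ and $\mathcal{H}'(x) = 1/(\pi(1+x^2))$, the first three items of \eqref{eq:assumptions} are essentially immediate: I read off $\delta \leq \chi_\theta^\delta \leq \chi_0$, bound $|(\chi_\theta^\delta)'| \leq \chi_0/(\pi\epsilon)$, and obtain $(\chi_\theta^\delta)'\to 0$ as $\theta\to\pm\infty$ from $\mathcal{H}'(x) = O(x^{-2})$. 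The fourth item, $\lim_{\theta\to\pm\infty}|\theta(\chi_\theta^\delta)'|\leq C$, follows from the elementary fact that $|y|/(1+y^2)\leq 1/2$ for $y\in\R$; in fact the limit is $0$.

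For the three ratio bounds in \eqref{eq:heavysidelemma}, the key observation is the trivial but crucial lower bound $\chi_\theta^\delta \geq (\chi_0-\delta)\mathcal{H}(\theta/\epsilon)$. Dropping the additional $\delta$ from the denominator reduces matters to the three pointwise estimates
\[
\left|\frac{(\chi^\delta_\theta)'}{\chi^\delta_\theta}\right| \leq \frac{1}{\epsilon}\cdot \frac{\mathcal{H}'(\theta/\epsilon)}{\mathcal{H}(\theta/\epsilon)}, \quad \left|\frac{\theta(\chi^\delta_\theta)'}{\chi^\delta_\theta}\right| \leq \frac{|\theta/\epsilon|\,\mathcal{H}'(\theta/\epsilon)}{\mathcal{H}(\theta/\epsilon)}, \quad \left|\frac{(\chi^\delta_\theta)'}{(\chi^\delta_\theta)^2}\right| \leq \frac{1}{\epsilon(\chi_0-\delta)}\cdot \frac{\mathcal{H}'(\theta/\epsilon)}{\mathcal{H}(\theta/\epsilon)^2}.
\]
It then suffices to show that the continuous functions $\mathcal{H}'(x)/\mathcal{H}(x)$, $x\mathcal{H}'(x)/\mathcal{H}(x)$, and $\mathcal{H}'(x)/\mathcal{H}(x)^2$ are bounded on all of $\R$. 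For $x \geq 0$, this is trivial since $\mathcal{H}(x)\geq 1/2$ and $\mathcal{H}'(x)\leq 1/\pi$. For $x\to -\infty$, I would use the identity $\arctan(x)+\arctan(1/x) = -\pi/2$ (valid for $x<0$) to write $\mathcal{H}(x) = -\pi^{-1}\arctan(1/x) \sim 1/(\pi|x|)$, and combine this with $\mathcal{H}'(x)\sim 1/(\pi x^2)$; each of the three quotients then has a finite limit as $x\to-\infty$, so continuity yields global boundedness.

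There is no real obstacle; the argument is essentially a sharp asymptotic analysis of the explicit formula. The only mild subtlety is the factor $1/(\chi_0-\delta)$ appearing in the third bound after dropping $\delta$ from the denominator. This is handled by restricting without loss of generality to $\delta \leq \chi_0/2$, which is the relevant regime for the eventual limit $\delta\to 0$ performed later in Section~\ref{sec:vanishing-mag}; under this restriction $1/(\chi_0-\delta)\leq 2/\chi_0$, and a single constant $C$ depending only on $\chi_0$ and $\epsilon$ serves for all three bounds.
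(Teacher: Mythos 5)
Your proof is correct and follows essentially the same route as the paper: substitute the explicit $\mathcal{H}$, drop $\delta$ to get $\chi^\delta_\theta \geq (\chi_0-\delta)\mathcal{H}(\theta/\eps)$ so the ratios reduce to $\delta$-independent quotients $\mathcal{H}'/\mathcal{H}$, $x\mathcal{H}'/\mathcal{H}$, $\mathcal{H}'/\mathcal{H}^2$, and control these via the $\arctan$ asymptotics as $x\to-\infty$. If anything you are more careful than the paper about the leftover factor $1/(\chi_0-\delta)$ in the third bound; your restriction to $\delta\le\chi_0/2$ is harmless, and the complementary regime is anyway trivial since there $\chi^\delta_\theta\ge\delta>\chi_0/2$ and $|(\chi^\delta_\theta)'|\le \chi_0/(\pi\eps)$.
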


\begin{proof}
First, we note that $\mathcal{H}$ is bounded, $0\le \mathcal{H}(x) \le 1$, and therefore we have $\delta \le \chi_\theta \le \chi_0$. Moreover, $\mathcal{H}(x)$ has asymptotics $\mathcal{H}(x) \to 1$ as $x\to \infty$ and 
\begin{equation}
\label{eq:asymp-inf}
\mathcal{H}(x) = \frac{1}{\pi} \arctan(x) + \frac{1}{2} 
= - \frac{1}{\pi x} + O\left(\frac{1}{x^3}\right), \quad x\to -\infty.
\end{equation}
We also note that
\begin{equation}
\label{eq:hderiv}
\mathcal{H}'(x) = \frac{1}{\pi} \frac{1}{1+x^2}.
\end{equation}
The last identity \eqref{eq:hderiv} and the definition of $\chi_\theta = \chi^\delta_\theta$ yields
\[
|\chi_\theta'| 
\le \frac{\chi_0}{\varepsilon} \mathcal{H}'(\theta/\epsilon)
= \frac{\chi_0}{\pi\varepsilon \left[1+(\theta/\varepsilon)^2\right]},
\]
where the right-hand side is independent of $\delta$. This immediately implies the properties \eqref{eq:assumptions}. Similarly, we see that
\begin{align*}
\left|
\frac{(\chi_{\theta})'}{\chi_{\theta}}
\right|
&\le
 \frac{1}{\varepsilon} \frac{1}{\left[ 1+(\theta/\varepsilon)^2 \right]} \frac{1}{\left[\arctan(\theta/\varepsilon) + \pi/2\right]},
\end{align*}
is uniformly bounded by the function on the right-hand side for any $\delta>0$. Furthermore, we observe that $(\chi_\theta)'/\chi_\theta \to 0$ as $\theta \to +\infty$, and
\[
\left|
\frac{(\chi_{\theta})'}{\chi_{\theta}}
\right|
\le \frac{1}{\varepsilon}  \frac{1}{\left[ 1+(\theta/\varepsilon)^2 \right]} \frac{1}{\left[\varepsilon/|\theta| + O(\varepsilon^3/|\theta|^3)\right]} 
= 
O\left( \frac{1}{|\theta|} \right)
\to 0, \quad \text{as } \theta \to -\infty.
\]
This implies the second estimate in~\eqref{eq:heavysidelemma}.
For the last estimate in~\eqref{eq:heavysidelemma}, we calculate,
\[
\left|
\frac{(\chi_{\theta})'}{\chi_{\theta}^2}
\right|
\le \frac{1}{\varepsilon}  \frac{1}{\left[ 1+(\theta/\varepsilon)^2 \right]} \frac{1}{\left[\varepsilon/|\theta| + O(\varepsilon^3/|\theta|^3)\right]^2} 
= 
O\left( 1 \right),\quad \text{as } \, \theta\to-\infty,
\]
whereas if $\theta>0$, since $\chi_\theta$ is increasing, and $\chi'_\theta$ is bounded, this is clearly satisfied.
This concludes our proof.
\end{proof}

\subsubsection{Detailed outline of the proof of Proposition \ref{prop:existence}}
As pointed out in the introduction, our existence proof relies on a Galerkin truncation of the system \eqref{eq:CHF}, where we will consecutively send the Galerkin truncations in $\bm$, $\theta$, $\bu$ to infinity. More precisely, our existence proof involves the following steps:
\begin{itemize}
\item In subsection \ref{sec:scheme}, we introduce our Galerkin truncation  (in space) for $\bm$, $\theta$, $\bu$, leading to a discretized system of ODEs. This Galerkin truncation is described by parameters $N_m$, $N_\theta$, $N_u$, defined as the dimensions of the discretizations in $\bm$, $\theta$ and $\bu$, respectively.
\item In subsection \ref{sec:global-in-time}, we first show global-in-time existence of solutions to the discretized system with $N_m, N_\theta, N_u$ fixed. This will result in estimates that are uniform in $N_m$ (but depending on $N_\theta$, $N_u$). 
\item In subsection \ref{sec:Nm-limit}, we use these a priori bounds to pass to the limit $N_m \to \infty$.
\item Then, in subsection \ref{sec:Nth-limit} we analyze the limiting system, now only involving truncation parameters $N_u$, $N_\theta$. We derive improved a priori estimates, which only depend on $N_u$, but are independent of $N_\theta$. In turn, we can pass $N_\theta\to \infty$.
\item Finally, in subsection \ref{sec:Nu-limit}, we consider the limiting system depending only on $N_u$. Here, we derive bounds that allow us to pass $N_u \to \infty$, yielding a weak solution of the limiting system \eqref{eq:CHF}.
\end{itemize}
The details of this derivation are provided below.

\subsubsection{Approximation scheme}
\label{sec:scheme}
We start by approximating the initial condition for $\bm$, $\bm_0$ by a sequence of smooth functions $\bm_0^\sigma$ such that $\bm_0^\sigma\to \bm_0$ in $L^2(\dom)$ and $\norm{\bm_0^\sigma}_{L^2}\leq \norm{\bm_0}_{L^2}$.
Then, we approximate $\bu$, $\bm$ and $\theta$ using a Galerkin approximation. We let $\{a_j\}_{j=1}^\infty$ be a basis of Stokes eigenfunctions on $\V$, $\{d_j\}_{j=1}^\infty$ a sequence of smooth eigenfunctions, of the Laplace operator
\begin{equation*}
	-\Delta d_j = \lambda_j d_j,\quad \text{in }\,\dom,\quad \frac{\partial d_j}{\partial n}=0,\quad \text{on }\, \partial \dom,\quad \int_{\dom} d_j dx =0,
\end{equation*}
with $0<\lambda_1\leq \lambda_2\leq ...\leq \lambda_j\leq \lambda_{j+1}\leq ...$
that forms an orthogonal basis of the space $\ths:=\{v\in H^1(\dom)\,:\, \int_{\dom} v dx =0\}$ with respect to the inner product of $L^2(\dom)$, and $\{c_j\}_{j=1}^\infty:\dom\to \R^d$ a sequence of smooth eigenfunctions of the vector Laplacian,
\begin{equation*}
	-\Delta c_j = \eta_j c_j,\quad \text{in }\, \dom,\quad c_j=0,\quad \text{on }\, \partial\dom,
\end{equation*} 
that form an orthonormal basis of $L^2(\dom;\R^d)$.
We denote the finite dimensional subspaces, which are projections onto the linear span of the first $N_i$ basis functions:
\begin{equation*}
	\mathcal{U}_{N_u} = \text{span}\{a_j\}_{j=1}^{N_u},\quad \mathcal{W}_{N_\theta} = \text{span}\{\{d_j\}_{j=1}^{N_\theta}\cup \{1\}\},\quad  \mathcal{M}_{N_m} = \text{span}\{c_j\}_{j=1}^{N_m},
\end{equation*}
where we denoted by $1$ the constant function.
We denote by $P^i_N$ the $L^2$-projection onto the first $N$ basis functions of the space for the variable $i$ ($i\in \{\bu,\bm,\theta\}$). Specifically, these projections satisfy with respect to the $L^2$-inner product
\begin{align*}
	(P_{N}^u v,w) &= (v,w),\quad \forall w\in \mathcal{U}_N,\\
		(P_{N}^\theta v,w) &= (v,w),\quad \forall w\in \mathcal{W}_N,\\
		(P_{N}^m v,w) &= (v,w),\quad \forall w\in \mathcal{M}_N.
\end{align*}
We look for functions
\begin{equation*}
	\bu^{\sigma,\eta,N_u,N_\theta,N_m} = \sum_{j=1}^{N_u} \bu_j a_j,\quad \theta^{\sigma,\eta,N_u,N_\theta,N_m}= \sum_{j=1}^{N_\theta} \theta_j d_j,\quad 
	 \bm^{\sigma,\eta,N_u,N_\theta,N_m} = \sum_{j=1}^{N_m} \bm_j c_j,
\end{equation*}
that solve (we omit the superscripts for the moment for better readability)
\begin{subequations}\label{eq:Galerkinsolve}
	\begin{align}
		(\bu_t,v)+b_u(\bu,\bu,v)&=-(\nu_\theta T(\bu),T(v))+\mu_0((\bm\cdot\Grad)\bh,v)-\lambda\eps(\Delta\theta\Grad\theta,v), \quad \forall v\in \mathcal{U}_{N_u}\label{eq:NSdiscrete}\\
		(\bm_t,w)+b_m(\bu,\bm,w)&=-\frac{1}{\tau}(\bm-(\chi_\theta\bh)_\eta,w),\quad \forall w\in \mathcal{M}_{N_m} \label{eq:meqdiscrete}\\
	 \Grad\varphi&=\bh,\quad -\Delta \varphi =\Div\bm,\quad (t,x)\in [0,T]\times \dom,\label{eq:maxwelldiscrete}\\
	  \Grad\varphi\cdot \mathbf{n}& = (\bh_a-\bm)\cdot\mathbf{n},\quad (t,x)\in [0,T]\times\partial\dom\label{eq:bcmaxwell} \\
		(\theta_t,z )+b_\theta(\bu,\theta,z)&=-\kappa(\Grad\psi,\Grad z),\quad \forall z\in\mathcal{W}_{N_\theta}\label{eq:phasefielddiscrete}\\
		(\psi,z)&= -\lambda\eps(\Delta\theta,z) +\frac{\lambda}{\eps}(F'(\theta),z)-\left(\frac{\mu_0}{2\chi_\theta^2}\chi'_\theta|\bm|^2,z\right),\quad \forall z\in \mathcal{W}_{N_\theta}\label{eq:psi1}
	\end{align}
\end{subequations}
where $\eta>0$ and $(f)_\eta$, which appears on the right-hand side of \eqref{eq:meqdiscrete}, is defined as
\begin{equation*}
	(f)_\eta(x) = \int_{\R^d} f(x-y)\omega_\eta(y) dy,
\end{equation*}
where $\omega_\eta(y) = \eta^{-d} \omega(y/\eta)$, $\omega$ is a smooth, radially symmetric and compactly supported within $B_1(0)$ mollifier with $\int_{\R^d}\omega dx =1$. If needed, we extend $f$ by zero outside of $\dom$. The trilinear forms $b_m$, $b_u$ and $b_\theta$ are defined as
\begin{equation*}
	b_u(u,v,w) = \int_{\dom} (u\cdot\Grad ) v \cdot w dx,  \quad \forall u\in L^2_{\text{div}}(\dom;\R^d), v,w\in H^1(\dom;\R^d),
\end{equation*} 
\begin{equation*}
	b_m(u,v,w) = \int_{\dom} (u\cdot\Grad ) v \cdot w dx,  \quad \forall u\in L^2_{\text{div}}(\dom;\R^d), v,w\in H^1(\dom;\R^d),
\end{equation*}
and 
\begin{equation*}
	b_\theta(u,v,w) = \int_{\dom} (u\cdot\Grad ) v \cdot w dx,  \quad \forall u\in L^2_{\text{div}}(\dom;\R^d), v,w\in H^1(\dom).
\end{equation*}
Note that all three are skew-symmetric. $\bh$, $\varphi$ respectively, also depend on the parameters $\sigma, \eta, N_u, N_\theta, N_m$.
In the following, we will first send $N_m\to\infty$, then $\eta\to 0$ and $N_\theta\to \infty$ and finally $\sigma\to 0$ and $N_u\to \infty$. We will therefore use $\eta = 1/N_\theta$ and $\sigma = 1/N_u$ and omit the dependence of the approximations on $\eta$ and $\sigma$.

\subsubsection{Global existence of solutions of ODE system independently of $N_m$}
\label{sec:global-in-time}
First, we need to show that this "semi-discrete" system has a solution for all times. 
We note that we can eliminate $\varphi$ from the system by solving the linear elliptic PDE~\eqref{eq:maxwelldiscrete} with boundary conditions~\eqref{eq:bcmaxwell} and obtain an expression that depends on $(-\Delta)^{-1} c_j$, $m_j$ and the boundary conditions. 
Plugging this into the other equations, we obtain a system of ODEs for the coefficients $\bu_j$, $\bm_j$ and $\theta_j$ with Lipschitz continuous right hand side (polynomials in $\bu_j$, $\bm_j$ and $\theta_j$). Thus, the Picard-Lindel\"of theorem yields existence of a unique solution of~\eqref{eq:Galerkinsolve} in a small time interval $[0,T^*]$ for $T^*$ chosen sufficently small. We want to extend this to arbitrary times $T>0$. 
\begin{remark}
	For example we could decompose $\varphi$ into $\varphi = \sum_{j=0}^{N_m} \varphi_j$ where $\varphi_0$ solves
	\begin{equation*}
		-\Delta \varphi_0 = 0,\quad \int \varphi_0 dx = 0,\quad -\Grad \varphi_0\cdot\mathbf{n} = (\bh_a -\bm)\cdot \mathbf{n} , 
	\end{equation*}
	and $\varphi_j$, $j=1,\dots, N_m$ solve
	\begin{equation*}
		-\Delta \varphi_j = m_j \Div c_j,\quad \int \varphi_j dx =0,\quad -\Grad\varphi_j \cdot \mathbf{n} = 0.
	\end{equation*}
\end{remark}

To do so, we first establish  a priori   estimates. First, we will try to obtain a bound on $\bma$. We use $w=\bma$ as a test function in~\eqref{eq:meqdiscrete} to obtain (use the skew symmetry of $b_m$)
\begin{equation}\label{eq:mbound}
	\frac12\frac{d}{dt}\int_{\dom} |\bma(t,x)|^2 dx  = -\frac{1}{\tau}\int_{\dom} |\bma(t,x)|^2 dx + \frac{1}{\tau}\int_{\dom} \bma \cdot (\chi_{\tha} \bha)_{\eta} dx. 
\end{equation}
Furthermore, from the elliptic equation~\eqref{eq:maxwelldiscrete}, we obtain, multiplying by $\pha$,
\begin{equation*}
	\int_{\dom} |\Grad \pha|^2 dx  = -\int_{\dom} \bma \cdot \Grad \pha dx +\int_{\partial\dom} \bh_a \cdot \mathbf{n}\pha dS.
\end{equation*}
Using Cauchy-Schwarz and the trace inequality, we can estimate the right hand side by
\begin{equation*}
	\begin{split}
		\int_{\dom} |\Grad \pha|^2 dx  
	&\leq \int_{\dom} |\bma|^2 dx +\frac14\int_{\dom}|\Grad \pha|^2 dx +C(\dom)^2 \int_{\partial\dom} |\bh_a|^2 dS \\
	&\quad +\frac{1}{4}\int_{\dom}|\Grad \pha|^2 dx.
		\end{split}\end{equation*}
Rearranging terms, we obtain
\begin{equation}\label{eq:boundh}
	\int_{\dom} |\bha|^2 dx =	\int_{\dom} |\Grad \pha|^2 dx \leq 2  \int_{\dom} |\bma|^2 dx  +2 C(\dom)^2 \int_{\partial\dom} |\bh_a|^2 dS ,
\end{equation}
thus if we can bound $\bma$, a bound on $\bha$ in $L^2$ automatically follows. Going back to~\eqref{eq:mbound}, and using that the convolution of a function satisfies $\norm{(f)_\eta}_{L^p}\leq \norm{f}_{L^p}$ for all $p\geq 1$, we find
\begin{equation*} 
	\begin{split}
	\frac12\frac{d}{dt}\int_{\dom} |\bma(t,x)|^2 dx & \leq  -\frac{1}{\tau}\int_{\dom} |\bma(t,x)|^2 dx + \frac{1}{2\tau}\int_{\dom} |\bma(t,x)|^2dx \\
	&\quad + \frac{1}{2\tau}\int_{\dom} |(\chi_{\tha} \Grad \pha)_{\eta}|^2 dx\\
	& \leq 
    C(\chi_0,\tau)\int_{\dom} |\bma(t,x)|^2 dx + C(\chi_0,\tau,\dom)\int_{\partial\dom} |\bh_a|^2 dS.
	\end{split}
\end{equation*}
Thus, we obtain from Gr\"onwall's inequality 
\begin{equation}\label{eq:uniformmbound}
	\norm{\bma(t)}_{L^2(\dom)}^2 \leq \norm{\bm_0}_{L^2(\dom)}^2 e^{Ct},
\end{equation}
therefore, since the initial data $\bm_0$ is in $L^2(\dom)$ and $\bh_a\in W^{1,\infty}([0,T];L^2(\partial\dom))$, then $\norm{\bma}_{L^\infty(0,T;L^2(\dom))}$ will be bounded uniformly in $N_m$, $N_u$ and $N_\theta$. From the bound~\eqref{eq:boundh}, we conclude that also $\norm{\bha}_{L^\infty(0,T;L^2(\dom))}$ is uniformly bounded in   $N_m$, $N_u$ and $N_\theta$. 
We continue to collect additional estimates. 
We use $z=\tha$ as a test function in~\eqref{eq:phasefielddiscrete} to obtain (using~\eqref{eq:psi1} with $z=\Delta \tha$ as a test function)
\begin{multline*}
	\frac12\frac{d}{dt}\norm{\tha}_{L^2}^2= -\kappa \lambda\varepsilon(\Delta\tha,\Delta\tha)\\
	+ \frac{\lambda\kappa}{\varepsilon}(F'(\tha),\Delta\tha) - \frac{\kappa\mu_0}{2}\left(\frac{\chi_{\tha}'}{\chi_{\tha}^2} |\bma|^2,\Delta\tha\right).
\end{multline*}
We recall that $F'$ satisfies $|F'(\theta)|\leq C(1+|\theta|)$.
We use Young's inequality to estimate the second term and obtain
\begin{equation*}
	\begin{split}
	\frac12\frac{d}{dt}\norm{\tha}_{L^2}^2&\leq  -\frac{\kappa\lambda \varepsilon}{2}\norm{\Delta\tha}_{L^2}^2+ \frac{\lambda\kappa}{2\varepsilon^3}\norm{F'(\tha)}_{L^2}^2 \\
	&\quad - \frac{\kappa\mu_0}{2}\left(\frac{\chi_{\tha}'}{\chi_{\tha}^2} |\bma|^2,\Delta\tha\right)\\
	&\leq  -\frac{\kappa \lambda\varepsilon}{2}\norm{\Delta\tha}_{L^2}^2+ \frac{C\lambda\kappa}{2\varepsilon^3}(1+\norm{\tha}_{L^2}^2 )\\
	&\quad - \frac{\kappa\mu_0}{2}\left(\frac{\chi_{\tha}'}{\chi_{\tha}^2} |\bma|^2,\Delta\tha\right)
	\end{split}
\end{equation*}

Using the boundary condition for $\tha$, we integrate by parts twice in the first term on the right hand side to obtain
\begin{multline*}
	\frac12\frac{d}{dt}\norm{\tha}_{L^2}^2\leq  -\frac{\kappa\lambda \varepsilon}{2}\norm{\Grad^2\tha}_{L^2}^2+  \frac{C\lambda\kappa}{2\varepsilon^3}(1+\norm{\tha}_{L^2}^2 )\\
	- \frac{\kappa\mu_0}{2}\left(\frac{\chi_{\tha}'}{\chi_{\tha}^2} |\bma|^2,\Delta\tha\right)
\end{multline*}
Since we are currently not interested in sending $\varepsilon \to 0$ and keep all other parameters fixed, we will lump all of them together in the constant $C$. Next, we use that $\chi$ and its derivatives are bounded 
(see equation \eqref{eq:assumptions}) to estimate the last term on the right hand side
\begin{equation*}
	\frac12\frac{d}{dt}\norm{\tha}_{L^2}^2\leq  -\frac{\kappa\lambda \varepsilon}{2}\norm{\Grad^2\tha}_{L^2}^2+  C(1+\norm{\tha}_{L^2}^2 )+ C \norm{\bma}_{L^2}^2\norm{\Delta\tha}_{L^\infty}
\end{equation*}
We have already shown in~\eqref{eq:uniformmbound} that $\bma\in L^\infty(0,T;L^2(\dom))$ uniformly in $N_i$, thus, we can further estimate the right hand side
\begin{equation*}
	\frac12\frac{d}{dt}\norm{\tha}_{L^2}^2\leq  -\frac{\kappa\lambda \varepsilon}{2}\norm{\Grad^2\tha}_{L^2}^2+  C(1+\norm{\tha}_{L^2}^2 )+ C \norm{\Delta\tha}_{L^\infty}.
\end{equation*}
Next, we use the fact that $\tha$ is expanded in a finite-dimensional smooth Galerkin basis. In particular, there exists $\Lambda_{N_\theta}>0$ depending only on $N_\theta$, such that $\norm{\Delta\tha}_{L^\infty} \le \Lambda_{N_\theta} \norm{\tha}_{L^2}$. Plugging this back into the above bound, we obtain
\begin{equation*}
	\frac12\frac{d}{dt}\norm{\tha}_{L^2}^2  \leq C (1+\Lambda_{N_\theta}) (1+\norm{\tha}_{L^2}^2 ).
\end{equation*}
Using Gr\"onwall's inequality we first obtain a bound on $\norm{\tha(t)}_{L^2}^2$ and using once more the finite-dimensionality and smoothness of the Galerkin expansion to bound second derivatives in terms of the $L^2$-norm, it follows that 
\begin{equation}
	\label{eq:almostuniformphibound}
	\norm{\tha(t)}_{L^2}^2+\norm{\Grad^2\tha(t)}^2_{L^2}
    \leq C({N_\theta},\norm{\theta_0}_{L^2},T), \quad \forall t\in [0,T],
\end{equation}
where $\theta_0$ is the initial condition of~\eqref{eq:phasefield1}. To clarify, the bound on the second-order derivatives is a consequence of the bound on the $L^2$-norm, but we state it here explicitly for later reference.
Thus, we obtained a bound on $\norm{\tha}_{L^\infty(0,T;L^2(\dom))}$ and $\norm{\tha}_{L^2(0,T;H^2(\dom))}$ that is independent of $N_u$ and $N_m$ (but depends on $N_\theta$).

Next, we derive  a bound for $\bua$. To do so,
we use $v=\bua$ as a test function in~\eqref{eq:NSdiscrete} to obtain
\begin{equation}
	\label{eq:basicuestimate}
	\begin{split}
		\frac{d}{dt}\frac12\norm{\bua}_{L^2}^2&
	= -\int_{\dom}\nu_{\tha} |T(\bua)|^2 dx + \mu_0 \int_{\dom} (\bma\cdot\Grad)\bha \cdot \bua dx \\
	&\quad -\varepsilon\lambda\int_{\dom} \Delta\tha \Grad\tha\cdot\bua dx
	\end{split}
\end{equation}
We will use the following crude estimates for the last two terms on the right hand side:
\begin{align*}
	&\left|   \mu_0 \int_{\dom} (\bma\cdot\Grad)\bha \cdot \bua dx  \right| \\
	 &= 	\left|   \mu_0 \int_{\dom} ((\bma+\bha)\cdot\Grad)\bua\cdot \bha dx  \right| \\
	& \leq C\left(\norm{\bha}_{L^2}^2 +\norm{\bma}_{L^2}^2\right)\norm{\Grad \bua}_{L^\infty}\\
	& \leq C(N_u) \left(\norm{\bha}_{L^2}^2 +\norm{\bma}_{L^2}^2\right)\norm{\bua}_{L^2}\\
	& \leq C(N_u) \norm{\bua}_{L^2}\\
	& \leq C(N_u)^2 + \norm{\bua}^2_{L^2},
\end{align*}
where for the first line, we used~\cite[Lemma 2.3]{Nochetto2019}, c.f.,~\eqref{eq:kelvin}, and for the second to last inequality, we used the previously established estimate~\eqref{eq:uniformmbound}. For the remaining term, we estimate
\begin{align*}
	&\left|\varepsilon\lambda\int_{\dom} \Delta\tha \Grad\tha\cdot\bua dx\right|\\
	& \leq \norm{\Delta \tha}_{L^2}\norm{\Grad\tha}_{L^2}\norm{\bua}_{L^\infty}\\
	& \leq C(N_u)\sqrt{\lambda_{N_\theta}} \norm{\Delta\tha}_{L^2}\norm{\tha}_{L^2}\norm{\bua}_{L^2}\\
	& \leq C(N_u,N_\theta)
    + \norm{\bua}_{L^2}^2,
\end{align*}
where we used the estimate~\eqref{eq:almostuniformphibound} for the last inequality. Substituting these back into~\eqref{eq:basicuestimate}, we obtain,

\begin{multline}
	\label{eq:basicuestimate2}
	\frac{d}{dt}\frac12\norm{\bua}_{L^2}^2
	+\int_{\dom}\nu_{\tha} |T(\bua)|^2 dx \leq 2\norm{\bua}_{L^2}^2 
    +  C(N_u, N_\theta) .
\end{multline}
Now applying Gr\"onwall's inequality and Korn's inequality~\cite{Korn1909}, we obtain
\begin{equation}\label{eq:almostubound}
	\norm{\bua(t)}_{L^2}^2 + \norm{\Grad\bua}_{L^2(0,t;L^2(\dom))}^2 \leq C(N_\theta,N_u,T) \quad \forall t\in [0,T].
\end{equation}
Thus, estimates~\eqref{eq:uniformmbound},~\eqref{eq:boundh}, \eqref{eq:almostubound} and~\eqref{eq:almostuniformphibound} imply that the solution of~\eqref{eq:Galerkinsolve} exists for all times $t\in [0,T]$. We note that this estimates are uniform with respect to $N_m$ (and $\eta$) but not with respect to $N_u$ and $N_\theta$. We will next send $N_m\to\infty$ to derive better a priori estimates.  But first, we note that we can estimate the derivatives of $\bma$ as follows:
We take $w=-\Delta \bma$ as a test function in~\eqref{eq:meqdiscrete}. Integrating the derivatives by part and using the boundary conditions, we obtain
\begin{align*}
	\frac12\frac{d}{dt}\norm{\Grad \bma}_{L^2}^2& = -\frac{1}{\tau}\norm{\Grad\bma}_{L^2}^2 + \frac{1}{\tau}\int_{\dom} \Grad\bma \Grad(\chi_{\tha}\Grad\pha)_{\eta} dx\\
	&\quad  + \int_{\dom} ((\Grad\bua\cdot\Grad)\bma) \cdot \Grad\bma dx 
\end{align*}
We estimate the second term on the right by  (recall that $\eta = 1/N_\theta$),
\begin{align*}
	&\left|\frac{1}{\tau}\int_{\dom} \Grad\bma \Grad(\chi_{\tha}\Grad\pha)_{\eta} dx\right|\\
	& \leq \frac{1}{\tau}\norm{\Grad\bma}_{L^2}\norm{\Grad(\chi_{\pha}\Grad\pha)_{\eta}}_{L^2}\\
	& \leq \frac{\chi_0}{\tau }\norm{\Grad\bma}_{L^2}\norm{\bha}_{L^2}\norm{\Grad\tha}_{L^\infty} \\
	&\quad + \frac{\chi_0}{\tau}\norm{\Grad\bma}_{L^2}\norm{\Grad\bha}_{L^2}\norm{\tha}_{L^\infty} \\
	& \leq \frac{1}{2\tau}\norm{\Grad\bma}_{L^2}^2 + C(N_\theta) \norm{\bha}_{H^1}^2.
\end{align*}
For the third term, we have
\begin{align*}
	\left| \int_{\dom} ((\Grad\bua\cdot\Grad)\bma) \cdot \Grad\bma dx \right|& \leq \norm{\Grad\bua}_{L^\infty}\norm{\Grad\bma}_{L^2}^2\\
	& \leq C(N_u)\norm{\bua}_{L^2}\norm{\Grad\bma}_{L^2}^2\\
	&\leq C(N_u , N_\theta,T)\norm{\Grad\bma}_{L^2}^2
\end{align*}
by the previous estimate~\eqref{eq:almostubound}. Combining these, we obtain
\begin{multline*}
	\frac12\frac{d}{dt}\norm{\Grad \bma}_{L^2}^2 \leq  -\frac{1}{2\tau}\norm{\Grad\bma}_{L^2}^2 + C(N_\theta) \norm{\bha}_{H^1}^2 \\
	+C(N_u , N_\theta,T)\norm{\Grad\bma}_{L^2}^2.
\end{multline*}
Note that thanks to elliptic regularity, we have 
\begin{equation*}
	\norm{\Grad\bha}_{L^\infty(0,T;L^2(\dom))} \leq \norm{\Grad\bma}_{L^\infty(0,T;L^2(\dom))}. 
\end{equation*}
Thus, with Gr\"onwall's inequality, we conclude that
\begin{equation}\label{eq:mgradbound}	\norm{\Grad\bma}_{L^\infty(0,T;L^2(\dom))}\leq C(N_u , N_\theta,T).
\end{equation}
This estimate is uniform with respect to $N_m$.
Again, by elliptic regularity, this also implies the same estimate on $\bha$:
\begin{equation}\label{eq:hgradbound}
	\norm{\Grad\bha}_{L^\infty(0,T;L^2(\dom))}\leq C(N_u , N_\theta,T).
\end{equation}
Next, we note that $\psi^{N_u,N_\theta,N_m}$ is given by
\begin{equation}\label{eq:psiprojection}
	\psi^{N_u,N_\theta,N_m}= P^\theta_{N_\theta}\left(-\varepsilon\lambda\Delta \tha +\frac{\lambda}{\varepsilon}F'(\tha)-\frac{\mu_0\chi'_{\tha}}{2\chi_{\tha}^2}|\bma|^2\right).
\end{equation}
Thus, since we have already shown that both $\tha\in L^2(0,T;H^2(\dom))$ and $\bma\in L^\infty(0,T;H^1(\dom))$ are bounded, it follows that at least $\psi^{N_u,N_\theta,N_m}\in L^2(0,T;L^2(\dom))$ uniformly with respect to $N_m$.
Before being able to pass to the limit $N_m\to \infty$, we need to derive estimates on the time derivatives. We take $w=\partial_t \bma$ as a test function in~\eqref{eq:meqdiscrete} to estimate
\begin{align*}
	&\norm{\bma_t}^2_{L^2}\leq \norm{\bua}_{L^\infty}\norm{\Grad \bma}_{L^2}\norm{\bma_t}_{L^2}\\
	&\quad  + \frac1{\tau}\norm{\bma}_{L^2}\norm{\bma_t}_{L^2}+\frac{\chi_0}{\tau}\norm{\bha}_{L^2}\norm{\bma_t}_{L^2}.
\end{align*}
We divide by $\norm{\bma_t}_{L^2}$ and further estimate the right hand side using that $\bua$ has $N_u$ terms,
\begin{equation*}
	\begin{split}
	\norm{\bma_t}_{L^2}&\leq \norm{\bua}_{L^\infty}\norm{\Grad \bma}_{L^2} + \frac1{\tau}\norm{\bma}_{L^2}+\frac{\chi_0}{\tau}\norm{\bha}_{L^2}\\
	&\leq C(N_u)\norm{\bua}_{L^2}\norm{\Grad \bma}_{L^2} + \frac1{\tau}\norm{\bma}_{L^2}+\frac{\chi_0}{\tau}\norm{\bha}_{L^2}\\
	&\leq C(N_u,N_\theta,T)
\end{split}
\end{equation*}
using the previous estimates. This implies that $\partial_t\bma$ is uniformly in $L^\infty(0,T;L^2(\dom))$ with respect to $N_m$. Furthermore we have that $\bua$, $\tha$ have continuous time derivatives and therefore are $C^1$ with respect to time uniformly in $N_m$. 
To derive a uniform estimate on the time derivative of $\bha$, we take the time derivative in equation~\eqref{eq:maxwelldiscrete}, then take $\partial_t \pha$ as a test function and integrate by parts,
\begin{align*}
	\norm{\bha_t}_{L^2}^2 & = \int_{\dom}\bma_t \bha_t dx + \int_{\partial\dom} (\bh_a)_t\cdot\mathbf{n} \pha_t dS\\
	&\leq \norm{\bha_t}_{L^2}\norm{\bma_t}_{L^2} + \norm{(\bh_a)_t}_{L^2(\partial\dom)}\norm{\pha_t}_{L^2(\partial\dom)},
\end{align*}
where we have also used that $\partial_t \bh_a\in L^\infty(0,T;L^2(\partial\dom))$ by the assumptions.
Using trace inequalities and then dividing by $\norm{\bha_t}_{L^2}$, we obtain the bound
\begin{equation*}
	\norm{\bha_t}_{L^2} \leq \norm{\bma_t}_{L^2} +C(\dom) \norm{(\bh_a)_t}_{L^2(\partial\dom)}\leq C(N_u , N_\theta,T).
\end{equation*}
\subsubsection{Passing $N_m \to \infty$}
\label{sec:Nm-limit}

 Combining the  a priori estimates~\eqref{eq:almostubound},~\eqref{eq:almostuniformphibound},~\eqref{eq:uniformmbound},~\eqref{eq:mgradbound},~\eqref{eq:hgradbound},
\begin{align}\label{eq:apriori1}
	\begin{split}
		&\{\bu^{N_u,N_\theta,N_m}\}_{N_m\in\N} \subset L^\infty(0,T;L^2_{\text{div}}(\dom))\cap L^2(0,T;\V) \\
		&\{\theta^{N_u,N_\theta,N_m}\}_{N_m\in\N} \subset L^\infty(0,T;L^2(\dom))\cap L^2(0,T;H^2(\dom)) \\
		&\{\bh^{N_u,N_\theta,N_m}\}_{N_m\in\N} \subset L^\infty(0,T;H^1(\dom)) \\
	&\{\bm^{N_u,N_\theta,N_m} \}_{N_m\in\N} \subset  L^\infty(0,T;H^1(\dom)) \\
		&\{\psi^{N_u,N_\theta,N_m} \}_{N_m\in\N} \subset L^2(0,T;L^2(\dom))
	\end{split}
\end{align}
with the estimates on the time derivatives, we can apply the  Aubin-Lions-Simon lemma~\cite{Simon1987}, to obtain convergence of subsequences (which we will still denote by $N_m$ for readability) of $\bm^{N_u,N_\theta,N_m}\to \bm^{N_u,N_\theta}$ and $\bh^{N_u,N_\theta,N_m}\to \bh^{N_u,N_\theta}$ in $L^p([0,T]\times\dom)$ for all $p\in [1,6)$ and also in $C(0,T;L^2(\dom))$ where the limits are in $ L^\infty(0,T;H^1(\dom))$ with time derivatives in $C(0,T;L^2(\dom))$. Subsequences of $\{\bu^{N_u,N_\theta,N_m}\}_{N_m}$ and $\{\theta^{N_u,N_\theta,N_m}\}_{N_m}$ also converge to respective limits $\bu^{N_u,N_\theta}$ and $\theta^{N_u,N_\theta}$  thanks to their time derivatives being in $C^1([0,T])$ (they are vector fields that are time dependent only due to the finite dimensionality of the Galerkin projections). In the following, we will keep taking subsequences of subsequences but not always explicitely state it. We will denote all subsequences by the same indexes $N_u$, $N_\theta$ and $N_m$.
For the sequence $\{\psi^{N_u,N_\theta,N_m}\}_{N_m}$, we obtain from the estimates for $\tha$ and $\bma$ and the expression for $\psi^{N_u,N_\theta,N_m}$,~\eqref{eq:psiprojection}, that their time derivatives are at least uniformly bounded, and therefore a subsequence converges strongly in $C(0,T;L^2(\dom))$ to a limit $\psi^{N_u,N_\theta}$.
 Furthermore, we obtain from~\eqref{eq:apriori1} that $\bm^{N_u,N_\theta,N_m}\weakstar \bm^{N_u,N_\theta}$ and $\bh^{N_u,N_\theta,N_m}\weakstar \bh^{N_u,N_\theta}$ in $L^\infty(0,T;H^1(\dom))$ up to a subsequence. From the estimates on the time derivatives, we obtain $\bm_t^{N_u,N_\theta,N_m}\weakstar \bm_t^{N_u,N_\theta}$ and $\bh_t^{N_u,N_\theta,N_m}\weakstar \bh_t^{N_u,N_\theta}$ in $L^\infty(0,T;L^2(\dom))$ up to a subsequence. We take $w\in L^1(0,T;L^2(\dom))$ and then take $P_{N_m}^m w$ as a test function in~\eqref{eq:meqdiscrete} and pass to the limit $N_m\to\infty$ using the previously established convergences after integrating over the time interval:
\begin{multline*}
	\underbrace{\int_0^T(\bma_t,w) dt}_{(i)} + \underbrace{\int_0^T b_m(\bua,\bma,w) dt}_{(ii)} \\
	+\underbrace{\frac1{\tau}\int_0^T (\bma-(\chi_{\tha}\bha)_\eta,w) dt}_{(iii)} 
	= 	\underbrace{\int_0^T(\bma_t,w-P_{N_m}^m w) dt}_{(iv)} \\
	+ \underbrace{\int_0^T b_m(\bua,\bma,w-P_{N_m}^m w) dt}_{(v)} +\underbrace{\frac1{\tau}\int_0^T (\bma-(\chi_{\tha}\bha)_\eta,w-P_{N_m}^m w) dt}_{(vi)}
\end{multline*}
For the first term, we have using the weak convergence of the time derivative of $\bma$:
\begin{equation*}
	(i)\stackrel{N_m\to\infty}{\longrightarrow} \int_0^T(\bmb_t,w) dt.
\end{equation*}

For the second term, we use the weak star convergence of $\bma$ in $L^\infty(0,T;H^1(\dom))$ combined with the strong convergence of $\bua$ in $C([0,T])$ to pass to the limit:
\begin{equation*}
	(ii)\stackrel{N_m\to\infty}{\longrightarrow}\int_0^T b_m(\bub,\bmb,w) dt.
\end{equation*}
For the third term, we use the weak star convergence of $\bma$ and $\bha$ in $L^\infty(0,T;L^2(\dom))$ combined with the strong convergence of $\tha$ in $C([0,T])$ to pass to the limit:
\begin{equation*}
	(iii)\stackrel{N_m\to\infty}{\longrightarrow}\frac1{\tau}\int_0^T (\bmb-(\chi_{\thb}\bhb)_\eta,w) dt. 
\end{equation*}
We need to show that the terms on the right hand side go to zero. This is the case because $\{c_j\}_j$ form a basis of $L^2(\dom)$, thus
\begin{equation*}
	\norm{P_{N_m}^mw-w}_{L^1(0,T;L^2(\dom))}\stackrel{N_m\to\infty}{\longrightarrow} 0.
\end{equation*}
Then due to the a priori bounds~\eqref{eq:apriori1} and the fact that $\bua$ is uniformly bounded with respect to $N_m$ in $L^\infty$, we can see that $(iv), (v), (vi)\to 0$ as $N_m\to \infty$. Due to the weak convergence of $\bma$ and $\bha$ in $L^\infty(0,T;H^1(\dom))$, and their time continuity, we can also pass to the limit in the weak formulation for~\eqref{eq:maxwelldiscrete}:
\begin{equation*}
	(\Grad\pha,\Grad\phi) = -(\bma,\Grad\phi) + \int_{\partial\dom} \bh_a\cdot\mathbf{n}\phi dS
\end{equation*}
where $\phi\in H^1(\dom)$ to obtain that  the limit $\bhb$, $\phb$ respectively satisfies
\begin{equation*}
	(\Grad\phb,\Grad\phi) = -(\bmb,\Grad\phi) + \int_{\partial\dom} \bh_a\cdot\mathbf{n}\phi dS.
\end{equation*}
For the equations for $\bua$ and $\tha$, we use the strong convergence of $\bua$ and $\tha$ in $C([0,T])$ combined with the convergences for $\bma$ and $\bha$ to pass to the limit. We just consider the terms involving the functions $\bma$ and $\bha$:
In the equation for $\bua$,~\eqref{eq:NSdiscrete}, we have, using the identity~\eqref{eq:kelvin}, and the strong convergence of $\bma$ and $\bha$ in $C(0,T;L^2(\dom))$,
\begin{align*}
	\mu_0 (\bma\cdot\Grad \bha,v)&=-\mu_0 \int_{\dom}((\bma+\bha)\cdot\Grad)v \cdot\bha dx\\ &\qquad \stackrel{N_m\to\infty}{\longrightarrow} -\mu_0 \int_{\dom}((\bmb+\bhb)\cdot\Grad)v \cdot\bhb dx\\
	&\qquad  \qquad \quad =\mu_0 (\bmb\cdot\Grad)\bhb,v).
\end{align*}
 The remaining term involving $\bma$ is the last term in equation~\eqref{eq:psi1}: Using the strong convergence of $\bma$ in $C([0,T];L^2(\dom))$, the strong convergence of $\tha$ in $C([0,T])$ and the boundedness of $\chi_{\tha}$ and its derivative, we obtain
\begin{equation*}
	\frac{\mu_0}{2}\left(\frac{\chi_{\tha}'}{\chi_{\tha}^2}|\bma|^2,z\right)\stackrel{N_m\to\infty}{\longrightarrow}	\frac{\mu_0}{2}\left(\frac{\chi_{\thb}'}{\chi_{\thb}^2}|\bmb|^2,z\right).
\end{equation*}
Thus the limit $(\bub,\bmb,\bhb,\thb,\psi^{N_u,N_\theta})$ satisfies
\begin{subequations}\label{eq:Galerkinsolve2}
	\begin{align}
		&(\bub_t,v)+b_u(\bub,\bub,v)=-(\nu_{\thb} T(\bub),T(v))+\mu_0((\bmb\cdot\Grad)\bhb,v)\label{eq:NSdiscrete2}\\
		&   \hphantom{(\bub_t,v)+b_u(\bub,\bub,v)=-}       -\lambda\eps(\Delta\thb\Grad\thb,v), \quad \forall v\in \mathcal{U}_{N_u},\notag \\
		&\int_0^T(\bmb_t,w)dt+\int_0^T b_m(\bub,\bmb,w)dt =-\frac{1}{\tau}\int_0^T (\bmb-(\chi_{\thb}\bhb)_\eta,w)dt , \label{eq:meqdiscrete2}\\
		&\hphantom{ \int_0^T(\bmb_t,w)dt+\int_0^T b_m(\bub,\bmb,w)dt =-\frac{1}{\tau}\int_0^T   }        \forall w\in L^1(0,T;L^2(\dom)),\notag\\
		&\Grad\phb=\bhb,\quad (\Grad \phb,\Grad \phi) =-(\bmb,\Grad\phi)+\int_{\partial\dom}\bh_a\cdot\mathbf{n}\phi dS,\quad \forall \phi\in H^1(\dom),\label{eq:bcmaxwell2}\\
		&(\thb_t,z )+b_\theta(\bub,\thb,z)=-\kappa(\Grad\psib,\Grad z),\quad \forall z\in\mathcal{W}_{N_\theta},\label{eq:phasefielddiscrete2}\\
		&(\psib,z)= -\lambda\eps(\Delta\thb,z) +\frac{\lambda}{\eps}(F'(\thb),z)-\left(\frac{\mu_0}{2\chi_{\thb}^2}\chi'_{\thb}|\bmb|^2,z\right),\label{eq:psi2}\\
		&\hphantom{(\psib,z)= -\lambda\eps(\Delta\thb,z) +\frac{\lambda}{\eps}(F'(\thb),z)}\quad \forall z\in \mathcal{W}_{N_\theta}.\notag
	\end{align}
\end{subequations}
\subsubsection{Passing $N_\theta\to\infty$}
\label{sec:Nth-limit}
Next, we derive better a priori estimates on $\thb$ that will allow us to send $N_\theta\to\infty$. First, we observe that each of the components of $\bmb$ satisfies a transport equation with smooth in space and continuously differentiable in time transport velocity and source that is smooth in space and at least Lipschitz continuous in time, since $\thb_t$ and $\bhb_t$ are bounded in $L^\infty(0,T;L^2(D))$ by the estimates of the last section. Additionally, we recall the initial data $\bmb|_{t=0} = \bm^\sigma_0$ is a smooth approximation of $\bm_0$ with fixed smoothing parameter $\sigma = 1/N_u$. Thus, using a Lagrangian reformulation of the equations in terms of ODEs, c.f.~\cite{DiPernaLions1989}, it follows that $\bmb$ is unique as a solution of~\eqref{eq:meqdiscrete2} and at least $C^1$-smooth. Therefore the equation for $\bmb$ holds pointwise, so that we can consider equation~\eqref{eq:meqdiscrete2} without the time integral: 
\begin{equation}\label{eq:meqpointwise}
	(\bmb_t,w)+ b_m(\bub,\bmb,w) =-\frac{1}{\tau}(\bmb-(\chi_{\thb}\bhb)_\eta,w) , \quad \forall w\in L^2(\dom).
\end{equation}
In this identity, we take $w=|\bmb|^2\bmb $ as a test function  (which  is uniformly  in $L^2(\dom)$ thanks to the Sobolev embedding $L^6\subset H^1$) to obtain that $\bmb\in L^\infty(0,T;L^4(\dom))$ uniformly in $N_\theta$: Indeed, we have with Cauchy-Schwarz and Young's inequality after integrating over $[0,t]$,
\begin{align}\label{eq:mL4}
	\begin{split}
	&\frac14\norm{\bmb(t)}_{L^4}^4 - \frac14\norm{\bm^{N_u}_0}_{L^4}^4\\
	& = -\frac1{\tau}\int_0^t \norm{\bmb}^4_{L^4} ds + \frac1{\tau}\int_0^t \int_{\dom}|\bmb|^2 \bmb\cdot (\chi_{\thb}\bhb)_{\eta} dx ds\\
	& \leq  -\frac1{\tau}\int_0^t \norm{\bmb}^4_{L^4} ds + \frac{\chi_0}{\tau}\int_0^t \norm{\bmb}_{L^4}^3\norm{\bhb}_{L^4} ds\\
		& \leq  -\frac1{\tau}\int_0^t \norm{\bmb}^4_{L^4} ds + \frac1{2\tau}\int_0^t \norm{\bmb}_{L^4}^4 ds +\frac{3\chi_0^4}{8\tau}\int_0^t\norm{\bhb(s)}_{L^4}^4 ds\\
		&\leq \frac{3\chi_0^4}{8\tau}\int_0^t\norm{\bhb}_{L^4}^4 ds.
		\end{split}
\end{align}
Elliptic regularity,  e.g.~\cite[Theorem 4.6]{Simader2006}, yields that $\norm{\bhb(t)}_{L^4}\leq C \norm{\bmb(t)}_{L^4}$, and then using Gr\"onwall's inequality, we obtain that $\bmb\in L^\infty(0,T;L^4(\dom))$ uniformly in  $N_\theta$   and we obtain the same bound for $\bhb$ also. In fact, this bound only depends on the approximation of the initial data, $\bm_0^\sigma$, i.e., the parameter $\sigma$.
We proceed to  improving the bounds for $\thb$. Taking again $z=\thb$ as a test function in~\eqref{eq:phasefielddiscrete2} and~\eqref{eq:psi2}, we obtain, as before
\begin{align*}
	&	\frac12\frac{d}{dt}\norm{\theta^{N_u,N_\theta}}_{L^2}^2\\
	&= -\kappa\lambda \varepsilon(\Delta\theta^{N_u,N_\theta},\Delta\theta^{N_u,N_\theta})+ \frac{\lambda\kappa}{\varepsilon}(F'(\theta^{N_u,N_\theta}),\Delta\theta^{N_u,N_\theta}) - \frac{\kappa\mu_0}{2}\left(\frac{\chi_{\theta^{N_u,N_\theta}}'}{\chi_{\theta^{N_u,N_\theta}}^2} |\bm^{N_u,N_\theta}|^2,\Delta\theta^{N_u,N_\theta}\right)\\
		&\leq  -\frac{\kappa \lambda\varepsilon}{2}\norm{\Delta\theta^{N_u,N_\theta}}_{L^2}^2+ \frac{C\lambda\kappa}{2\varepsilon^3}(1+\norm{\theta^{N_u,N_\theta}}_{L^2}^2 )- \frac{\kappa\mu_0}{2}\left(\frac{\chi_{\theta^{N_u,N_\theta}}'}{\chi_{\theta^{N_u,N_\theta}}^2} |\bm^{N_u,N_\theta}|^2,\Delta\theta^{N_u,N_\theta}\right)\\
		&\leq  -\frac{\kappa\lambda \varepsilon}{2}\norm{\Delta\theta^{N_u,N_\theta}}_{L^2}^2+ \frac{C\lambda\kappa}{2\varepsilon^3}(1+\norm{\theta^{N_u,N_\theta}}_{L^2}^2 )+\frac{\kappa\varepsilon\lambda}{4}\norm{\Delta\theta^{N_u,N_\theta}}_{L^2}^2 \\
		&\qquad +  \frac{\kappa\mu_0^2}{2\varepsilon\lambda }\frac{\sup |\chi_{\theta^{N_u,N_\theta}}'|^2}{\delta^4}\norm{\bm^{N_u,N_\theta}}_{L^4}^4\\
		& \leq -\frac{\kappa\lambda \varepsilon}{4}\norm{\Delta\theta^{N_u,N_\theta}}_{L^2}^2+ \frac{C\lambda\kappa}{2\varepsilon^3}(1+\norm{\theta^{N_u,N_\theta}}_{L^2}^2 ) +  \frac{\kappa\mu_0^2}{2\varepsilon\lambda}\frac{\sup |\chi_{\thb}'|^2}{\delta^4}\norm{\bmb}_{L^4}^4.
\end{align*}
Using the previously established bound on the $L^4$-norm of $\bmb$ and Gr\"onwall's inequality, we obtain that
\begin{equation}\label{eq:thetabound}
	\norm{\thb}_{L^\infty(0,T;L^2(\dom))}^2 + \norm{\Grad^2\thb}_{L^2(0,T;L^2(\dom))}^2 \leq C(\sigma)= C(N_u^{-1}).
\end{equation}
Note that since by integration by parts and the boundary conditions, we have
\begin{equation*}
	\int_{\dom} |\Grad\thb|^2 dx  = -\int_{\dom} \thb\Delta \thb dx \leq \norm{\thb}_{L^2}\norm{\Delta \thb}_{L^2} = \norm{\thb}_{L^2}\norm{\Grad^2\thb}_{L^2},
\end{equation*}
we obtain that $\Grad\thb\in L^4(0,T;L^2(\dom))$ uniformly in $N_\theta$.

Now, we will be able to derive a proper energy inequality for all the variables.  We start by taking $z=P^\theta_{N_\theta}\psi^{N_u,N_\theta}$ as a test function in~\eqref{eq:phasefielddiscrete2} and $z=\thb_t$ as a test function in~\eqref{eq:psi2}. Subtracting the resulting identities from each other, we obtain
\begin{align*}
	b_\theta(\bub,\thb,P^\theta_{N_\theta}\psib) &= -\kappa(\Grad\psib,\Grad\psib)-\frac{\varepsilon\lambda}{2}\frac{d}{dt}\norm{\Grad\thb}_{L^2}^2 -\frac{\lambda}{\varepsilon}\frac{d}{dt}\int_{\dom} F(\thb) dx\\
	&\quad  -\frac{\mu_0}{2}\int_{\dom} \partial_t\left(\frac{1}{\chi_{\thb}}\right) |\bmb|^2 dx,
\end{align*}
where we also integrated by parts and used chain rule where needed. Rearranging terms and plugging in~\eqref{eq:psi2} for $\psib$, we obtain
\begin{multline}\label{eq:thetatemp}
	\kappa\norm{\Grad\psib}_{L^2}^2+\frac{\varepsilon\lambda}{2}\frac{d}{dt}\norm{\Grad\thb}_{L^2}^2 +\frac{\lambda}{\varepsilon}\frac{d}{dt}\int_{\dom} F(\thb) dx +\frac{\mu_0}{2}\int_{\dom} \partial_t\left(\frac{1}{\chi_\thb}\right) |\bmb|^2 dx \\
	=	\varepsilon \lambda\int_{\dom} (\bub\cdot\Grad)\thb \cdot\Delta \thb dx  -\frac{\lambda}{\varepsilon}\int_{\dom} (\bub\cdot\Grad)\thb\cdot P_{N_\theta}^\theta F'(\thb) dx\\
	+\frac{\mu_0}{2}\int_{\dom} (\bub\cdot\Grad)\thb\cdot P_{N_\theta}^\theta \left(\frac{\chi_{\thb}'}{\chi_{\thb}^2}|\bmb|^2 \right)dx \\
	= 	\varepsilon\lambda \int_{\dom} (\bub\cdot\Grad)\thb \cdot\Delta \thb dx  +\frac{\lambda}{\varepsilon}\int_{\dom} (\bub\cdot\Grad)\thb\cdot (I-P_{N_\theta}^\theta) F'(\thb) dx\\
	-\frac{\mu_0}{2}\int_{\dom} (\bub\cdot\Grad)\thb\cdot (I-P_{N_\theta}^\theta) \left(\frac{\chi_{\thb}'}{\chi_{\thb}^2}|\bmb|^2 \right)dx \\
	- \frac{\mu_0}{2}\int_{\dom} \left(\bub\cdot\Grad\left(\frac{1}{\chi_{\thb}}\right) \right)\cdot|\bmb|^2 dx 
\end{multline}
where we used $I$ to denote the identity mapping.
Next, we take $v=\bub$ as a test function in~\eqref{eq:NSdiscrete2}
\begin{multline}\label{eq:utemp}
		\frac{d}{dt}\frac12\norm{\bub}_{L^2}^2
	= -\int_{\dom}\nu_{\thb} |T(\bub)|^2 dx + \mu_0 \int_{\dom} (\bmb\cdot\Grad)\bhb \cdot \bub dx \\
	-\lambda\varepsilon\int_{\dom} \Delta\thb \Grad\thb\cdot\bub dx
\end{multline}
Next, we then take $w=\mu_0(\chi_{\thb}^{-1}\bmb-\bhb)$ as a test function in~\eqref{eq:meqpointwise} to obtain
\begin{multline*}
	\frac{\mu_0}{2}\frac{d}{dt}\int_{\dom} \frac{|\bmb|^2}{\chi_{\thb}} dx - \frac{\mu_0}{2}\int_{\dom} |\bmb|^2 \partial_t\left(\frac{1}{\chi_{\thb}}\right) dx   -\mu_0\int_{\dom} \partial_t \bmb\cdot\bhb dx\\
	 + \frac{\mu_0}{2}\int_{\dom}\bub\cdot \Grad \left(\frac{|\bmb|^2}{\chi_{\thb}}\right) dx 
	-\frac{\mu_0}{2}\int_{\dom} |\bmb|^2 \bub\cdot\Grad\left(\frac{1}{\chi_{\thb}}\right) dx\\
	 - \mu_0\int_{\dom} ((\bub\cdot\Grad)\bmb )\cdot\bhb dx \\
	= - \frac{\mu_0}{\tau}\int_{\dom}\frac{1}{\chi_{\thb}}\left(\bmb-\chi_{\thb}\bhb\right)\left(\bmb-(\chi_{\thb}\bhb)_{\eta}\right) dx 
\end{multline*}
Next, we take the time derivative of equation~\eqref{eq:bcmaxwell2} and then
 $\phi=\mu_0\phb$ as a test function:
 \begin{equation*}
 	\frac{\mu_0}{2}\frac{d}{dt}\norm{\bhb}_{L^2}^2 = -\mu_0(\bmb_t,\bhb) + \mu_0\int_{\partial\dom}(\bh_a)_t\cdot\mathbf{n}\phb dS
 \end{equation*}
 Adding the last two identities together and using that $\bub$ is divergence free, we have:
 \begin{multline*}
 	\frac{\mu_0}{2}\frac{d}{dt}\int_{\dom} \left(\frac{|\bmb|^2}{\chi_{\thb}}+|\bhb|^2\right) dx - \frac{\mu_0}{2}\int_{\dom} |\bmb|^2 \partial_t\left(\frac{1}{\chi_{\thb}}\right) dx   \\
 		-\frac{\mu_0}{2}\int_{\dom} |\bmb|^2 \bub\cdot\Grad\left(\frac{1}{\chi_{\thb}}\right) dx
  - \mu_0\int_{\dom} ((\bub\cdot\Grad)\bmb )\cdot\bhb dx \\
 	= - \frac{\mu_0}{\tau}\int_{\dom}\frac{1}{\chi_{\thb}}\left(\bmb-\chi_{\thb}\bhb\right)\left(\bmb-(\chi_{\thb}\bhb)_{\eta}\right) dx \\
 	+\mu_0\int_{\partial\dom}(\bh_a)_t\cdot\mathbf{n}\phb dS
 \end{multline*}
 Now, we sum  this with the identity for $\bub$,~\eqref{eq:utemp}, and the identity for $\thb$,~\eqref{eq:thetatemp}, to obtain several cancellations of terms:
 \begin{multline}\label{eq:energyuthetadisc}
 	\frac{1}{2}\frac{d}{dt}\int_{\dom} \left(|\bub|^2 + \mu_0\left(\frac{|\bmb|^2}{\chi_{\thb}}+|\bhb|^2\right) \right)dx\\
 	+ \int_{\dom}\nu_{\thb} |T(\bub)|^2 dx  +\kappa\norm{\Grad\psib}_{L^2}^2+\frac{\varepsilon\lambda}{2}\frac{d}{dt}\norm{\Grad\thb}_{L^2}^2 +\frac{\lambda}{\varepsilon}\frac{d}{dt}\int_{\dom} F(\thb) dx\\
 	 = \frac{\mu_0}{2}\int_{\dom} |\bmb|^2 \partial_t\left(\frac{1}{\chi_{\thb}}\right) dx   	+\frac{\mu_0}{2}\int_{\dom} |\bmb|^2 \bub\cdot\Grad\left(\frac{1}{\chi_{\thb}}\right) dx\\
 -\frac{\mu_0}{2}\int_{\dom} \partial_t\left(\frac{1}{\chi_{\thb}}\right) |\bmb|^2 dx	- \frac{\mu_0}{2}\int_{\dom} \left(\bub\cdot\Grad\left(\frac{1}{\chi_{\thb}}\right) \right)\cdot|\bmb|^2 dx \\
  - \frac{\mu_0}{\tau}\int_{\dom}\frac{1}{\chi_{\thb}}\left(\bmb-\chi_{\thb}\bhb\right)\left(\bmb-(\chi_{\thb}\bhb)_{\eta}\right) dx \\
 		+ \mu_0\int_{\dom} ((\bub\cdot\Grad)\bmb )\cdot\bhb dx +\mu_0\int_{\partial\dom}(\bh_a)_t\cdot\mathbf{n}\phb dS
 	+ \mu_0 \int_{\dom} (\bmb\cdot\Grad)\bhb \cdot \bub dx\\
 	 -\varepsilon\lambda\int_{\dom} \Delta\thb \Grad\thb\cdot\bub dx
 	+	\varepsilon\lambda \int_{\dom} (\bub\cdot\Grad)\thb \cdot\Delta \thb dx \\
 	 +\frac{\lambda}{\varepsilon}\int_{\dom} (\bub\cdot\Grad)\thb\cdot (I-P_{N_\theta}^\theta) F'(\thb) dx-\frac{\mu_0}{2}\int_{\dom} (\bub\cdot\Grad)\thb \cdot (I-P_{N_\theta}^\theta) \left(\frac{\chi_{\thb}'}{\chi_{\thb}^2}|\bmb|^2 \right)dx \\ 
 	 = -\underbrace{\frac{\mu_0}{\tau}\int_{\dom}\frac{1}{\chi_{\thb}}\left(\bmb-\chi_{\thb}\bhb\right)\left(\bmb-(\chi_{\thb}\bhb)_{\eta}\right) dx}_{(i)} \\
 	 +\underbrace{\mu_0\int_{\partial\dom}(\bh_a)_t\cdot\mathbf{n}\phb dS}_{(ii)} 
 	 +\underbrace{\frac{\lambda}{\varepsilon}\int_{\dom} (\bub\cdot\Grad)\thb\cdot (I-P_{N_\theta}^\theta) F'(\thb) dx}_{(iii)}\\
 	  -\underbrace{\frac{\mu_0}{2}\int_{\dom} (\bub\cdot\Grad)\thb\cdot (I-P_{N_\theta}^\theta) \left(\frac{\chi_{\thb}'}{\chi_{\thb}^2}|\bmb|^2 \right)dx }_{(iv)}
 \end{multline}

 We estimate all the terms on the right hand side. For the first term, we have using Cauchy-Schwarz inequality,
 \begin{equation*}
 	|(i)|\leq \frac{2\mu_0(1+\chi_0^2)}{\tau\delta}\left(\norm{\bmb}_{L^2}^2 + \norm{\bhb}_{L^2}^2\right).
 \end{equation*}
 For the second term, we estimate, using trace inequalities
 \begin{equation*}
 	|(ii)|\leq \mu_0 \norm{(\bh_t)}_{L^2(\partial\dom)}\norm{\phb}_{L^2(\partial\dom)}\leq C \mu_0 \norm{(\bh_a)_t}_{L^2(\partial\dom)}\norm{\bhb}_{L^2(\dom)}\leq C (\norm{(\bh_a)_t}_{L^2(\partial\dom)}^2 + \norm{\bhb}_{L^2(\dom)}^2).
 \end{equation*}
 For the third term, we estimate, using that $|F'(\theta)|\leq C(1+|\theta|)$,
 \begin{align*}
 	|(iii)|&\leq C\frac{\lambda}{\varepsilon}\norm{\bub}_{L^4}\norm{\Grad\thb}_{L^2}\norm{F'(\thb)}_{L^4}\\
 	&\leq C\norm{\bub}_{L^4}\norm{\Grad\thb}_{L^2}(1+\norm{\thb}_{L^4})\\
 	& \leq C\norm{\bub}_{L^2}^{1/4}\norm{\Grad\bub}_{L^2}^{3/4}\norm{\Grad\thb}_{L^2}(1+C \norm{\thb}_{L^2}^{1/4}\norm{\Grad\thb}_{L^2}^{3/4})\\
 	&\leq C\norm{\bub}_{L^2}^{1/4}\norm{\Grad\bub}_{L^2}^{3/4}\norm{\Grad\thb}_{L^2}
C\norm{\bub}_{L^2}^{1/4}\norm{\Grad\bub}_{L^2}^{3/4}\norm{\Grad\thb}_{L^2}^{7/4} \\
 	&\leq \frac{\min\{\nu_f,\nu_w\}}{4}\norm{\Grad\bub}_{L^2}^2 + \frac{C}{\min\{\nu_f,\nu_w\}^{3/5}}\norm{\bub}_{L^2}^{2/5}\norm{\Grad\thb}_{L^2}^{14/5}  \\
 	&\quad +\frac{C}{\min\{\nu_f,\nu_w\}^{3/5}}\norm{\bub}_{L^2}^{2/5}\norm{\Grad\thb}_{L^2}^{8/5}\\
 	& \leq \frac{\min\{\nu_f,\nu_w\}}{4}\norm{\Grad\bub}_{L^2}^2 + C(\nu_f,\nu_w)\norm{\bub}_{L^2}^{2}+ C(\nu_f,\nu_w)\norm{\Grad\thb}_{L^2}^{14/4}  \\
 	&\quad + C(\nu_f,\nu_w)\norm{\Grad\thb}_{L^2}^{2},
 	 \end{align*}
 	 where we have used the Gagliardo-Nirenberg inequality, Young's inequality, and that the  projection $P_{N_\theta}^\theta$ is stable on $H^1(\dom)$ (and therefore on $L^4(\dom)$), and that $\thb\in L^\infty(0,T;L^2(\dom))$ uniformly in $N_\theta$ by~\eqref{eq:thetabound}.
 Finally, for the fourth term, we have
 \begin{align*}
	|(iv)|&\leq \frac{\mu_0\sup_{\theta}|\chi_\theta'|}{2\delta^2}\norm{\bub}_{L^4}\norm{\Grad\thb}_{L^4}\norm{\bmb}_{L^4}^2\\
	& \leq  C \norm{\bub}_{L^2}^{1/4}\norm{\Grad\bub}_{L^2}^{3/4} \norm{\Grad \thb}_{L^2}^{1/4}\norm{\Grad^2\thb}_{L^2}^{3/4}\norm{\bmb}_{L^4}^2\\
	&\quad + C \norm{\bub}_{L^2}\norm{\Grad\thb}_{L^2}\norm{\bmb}_{L^4}^2\\
	& \leq C \norm{\bub}_{L^2}^{1/2}\norm{\Grad \thb}_{L^2}^{1/2}\norm{\Grad^2\thb}_{L^2}^{3/2}\norm{\bmb}_{L^4}^4\\
	&\quad  + C \norm{\bub}_{L^2}^{1/2}\norm{\Grad\bub}_{L^2}^{3/2} \norm{\Grad \thb}_{L^2}^{1/2}\norm{\bmb}_{L^4}^4+ C \norm{\bub}_{L^2}^2\\
	&\quad + C \norm{\Grad\thb}_{L^2}^2\norm{\bmb}_{L^4}^4\\
	& \leq \frac{\min\{\nu_w,\nu_f\}}{4}\norm{\Grad\bub}_{L^2}^2 + \frac{C}{\min\{\nu_w,\nu_f\}^3} \norm{\bub}_{L^2}^2\norm{\Grad\thb}_{L^2}^2\norm{\bmb}_{L^4}^{16} \\
	&\quad + C \norm{\Grad^2\thb}_{L^2}^2 + C \norm{\bub}_{L^2}^2\norm{\Grad\thb}_{L^2}^2\norm{\bmb}_{L^4}^{16}\\
	&\quad + C \norm{\bub}_{L^2}^2+ C \norm{\Grad\thb}_{L^2}^2\norm{\bmb}_{L^4}^4.
\end{align*}
where we used the Gagliardo-Nirenberg inequality and that the  projection $P_{N_\theta}^\theta$ is stable on $H^1(\dom)$.
Using these estimates for the right hand side of~\eqref{eq:energyuthetadisc}, we obtain
 \begin{multline}\label{eq:energyuthetadisc2}
	\frac{1}{2}\frac{d}{dt}\int_{\dom} \left(|\bub|^2 + \mu_0\left(\frac{|\bmb|^2}{\chi_{\thb}}+|\bhb |^2\right) \right)dx+ \int_{\dom}\nu_{\thb} |T(\bub)|^2 dx \\
	 +\kappa\norm{\Grad\psib}_{L^2}^2+\frac{\varepsilon\lambda}{2}\frac{d}{dt}\norm{\Grad\thb}_{L^2}^2 +\frac{\lambda}{\varepsilon}\frac{d}{dt}\int_{\dom} F(\thb) dx\\
	\leq \frac{2\mu_0(1+\chi_0^2)}{\tau\delta}\left(\norm{\bmb}_{L^2}^2 + \norm{\bhb}_{L^2}^2\right)+C (\norm{(\bh_a)_t}_{L^2(\partial\dom)}^2 + \norm{\bhb}_{L^2(\dom)}^2) \\
	+\frac{\min\{\nu_f,\nu_w\}}{2}\norm{\Grad\bub}_{L^2}^2 + C(\nu_f,\nu_w)\norm{\bub}_{L^2}^{2}+ C(\nu_f,\nu_w)\norm{\Grad\thb}_{L^2}^{14/4}  \\
	+ C(\nu_f,\nu_w)\norm{\Grad\thb}_{L^2}^{2}
	+ \frac{C}{\min\{\nu_w,\nu_f\}^3} \norm{\bub}_{L^2}^2\norm{\Grad\thb}_{L^2}^2\norm{\bmb}_{L^4}^{16} \\
	+ C \norm{\Grad^2\thb}_{L^2}^2 + C \norm{\bub}_{L^2}^2\norm{\Grad\thb}_{L^2}^2\norm{\bmb}_{L^4}^{16}
	+ C \norm{\Grad\thb}_{L^2}^2\norm{\bmb}_{L^4}^4.
\end{multline}
Using that $\bmb\in L^\infty(0,T;L^4(\dom))$, $\Grad^2\thb\in L^2([0,T]\times\dom)$, $\thb\in L^\infty(0,T;L^2(\dom))$ and that $\Grad\thb\in L^4(0,T;L^2(\dom))$ uniformly in $N_\theta$, we can use Gr\"onwall's inequality, to obtain that
\begin{multline}\label{eq:uniformboundsenergy}
	\norm{\bub}_{L^\infty(0,T;L^2(\dom))} + \norm{\bmb}_{L^\infty(0,T;L^2(\dom))}+\norm{\bhb}_{L^\infty(0,T;L^2(\dom))}\\
	 +\norm{\Grad\thb}_{L^\infty(0,T;L^2(\dom))}+ \norm{\Grad\psib}_{L^2(0,T;L^2(\dom))}+ \norm{\bub}_{L^2(0,T;H^1_0(\dom))} \leq C(\sigma)=C(N_u),
\end{multline}
where $C$ is a constant that depends on $\sigma=1/N_u$, but is independent of $N_\theta$. 
From~\eqref{eq:mL4} and~\eqref{eq:thetabound}, we also have
\begin{equation}
	\label{eq:uniforminallbutsigma}
	\norm{\bmb}_{L^\infty(0,T;L^4(\dom))}+ \norm{\bhb}_{L^\infty(0,T;L^4(\dom))}+\norm{\Grad^2\thb}_{L^2(0,T;L^2(\dom))}\leq C(\sigma) = C(N_u).
\end{equation}
We next improve the estimate on the gradient of $\bmb$ with respect to $N_\theta$. Taking $-\Delta\bmb  $ as a test function in~\eqref{eq:meqpointwise} (this is allowed because $\bmb$ is smooth in space), integrating in time, and integrating by parts, we obtain
\begin{align*}
	\frac12\norm{\Grad\bmb(t)}_{L^2}^2 - \frac12\norm{\Grad\bm^\sigma_0}_{L^2}^2&  = -\int_0^t \int_{\dom}(\Grad\bub\cdot\Grad)\bmb : \Grad \bmb dxds\\
	& -\frac{1}{\tau}\int_0^t \int_{\dom} |\Grad\bmb(s)|^2 dxds \\
	&+\frac{1}{\tau}\int_0^t \int_{\dom} \Grad\bmb \cdot (\Grad(\chi_{\thb}\bhb ))_\eta dx ds\\
	&\leq \int_0^t\norm{\bub}_{L^\infty(\dom)}\norm{\Grad\bmb}_{L^2}^2 ds-\frac{1}{\tau}\int_0^t \int_{\dom} |\Grad\bmb(s)|^2 dxds \\
	& +\frac{1}{\tau}\int_0^t \int_{\dom} \Grad\bmb \cdot (\chi_{\thb}\Grad \bhb )_\eta dx ds \\
	&+\frac{1}{\tau}\int_0^t \int_{\dom}  \Grad\bmb\cdot (\chi'_{\thb}\Grad\thb\bhb)_\eta dx ds\\
	& \leq C(N_u) \int_0^t\norm{\bub}_{L^2(\dom)}\norm{\Grad\bmb}_{L^2}^2 ds \\
	&+ \frac{\chi_0}{\tau}\int_0^t \norm{\Grad\bmb}_{L^2}\norm{\Grad\bhb}_{L^2}ds \\
	& +\frac{C}{\tau}\int_0^t \int_{\dom}  |\Grad\bmb| |\Grad\thb| |\bhb| dx ds\\
	&\leq C(N_u)\norm{\bub}_{L^\infty(0,T;L^2(\dom))} \int_0^t\norm{\Grad\bmb}_{L^2}^2 ds \\
	&+ \frac{\chi_0}{2\tau}\int_0^t \left(\norm{\Grad\bmb}_{L^2}^2+\norm{\Grad\bhb}_{L^2}^2\right) ds \\
	& +\frac{C}{\tau}\int_0^t \norm{\Grad\bmb}_{L^2}\norm{\Grad\thb}_{L^4}\norm{\bhb}_{L^4} ds\\
&\leq C(N_u)\int_0^t\left(\norm{\Grad\bmb}_{L^2}^2  +\norm{\Grad\bhb}_{L^2}^2\right)ds \\
& +\frac{C}{\tau}\int_0^t \left(\norm{\Grad\bmb}_{L^2}^2 + \norm{\Grad\thb}_{L^4}^2\norm{\bhb}_{L^4}^2 \right)ds\\
	&\leq C(N_u)\int_0^t\norm{\Grad\bmb}_{L^2}^2 ds + \frac{\chi_0}{2\tau}\int_0^t\norm{\Grad\bhb}_{L^2}^2ds \\
	& +\frac{C}{\tau}\int_0^t \norm{\Grad\bmb}_{L^2}^2 + \norm{\thb}_{H^2}^2\norm{\bhb}_{H^1(\dom)}^2 ds
\end{align*}
where we have used the Sobolev embedding theorem.
Using that by elliptic regularity $\norm{ \bhb}_{H^1}\leq C \norm{\bmb}_{H^1}$, bound~\eqref{eq:thetabound} and Gr\"onwall's inequality, we obtain
\begin{equation}\label{eq:mestimateindepofkappa}
	\norm{\Grad\bmb}_{L^\infty(0,T;L^2(\dom))}+\norm{\Grad\bhb}_{L^\infty(0,T;L^2(\dom))}\leq C(N_u).
\end{equation}
In contrast to the previous estimate for $\Grad\bmb$, \eqref{eq:mgradbound}, this one is independent of $N_\theta$.
Next, we derive an $L^2(0,T;H^1(\dom))$-bound for $\psi^{N_u,N_\theta}$. The energy inequality~\eqref{eq:energyuthetadisc} already yields a bound on  the gradient of $\psi^{N_u,N_\theta}$. We want to use the Poincar\'e inequality,
\begin{equation}\label{eq:poincare}
	\norm{f-f_{av}}_{L^2}\leq C(\dom)\norm{\Grad f}_{L^2},
\end{equation}
where $f_{av} = \frac{1}{|\dom|}\int_{\dom} f dx$ is the average of $f$ over the domain, to bound the $L^2$-norm of $\psib$. So we need a bound on the average of $\psi^{N_u,N_\theta}$. We take $z\equiv 1$ as a test function in~\eqref{eq:psi2} to obtain
\begin{equation*}
	\begin{split}
	\int_{\dom} \psi^{N_u,N_\theta} dx& = -\varepsilon\lambda\int_{\dom} \Delta \theta^{N_u,N_\theta} dx + \frac{\lambda}{\varepsilon} \int_{\dom} F'(\theta^{N_u,N_\theta})dx -\int_{\dom} \frac{\mu_0}{2\chi_{\theta^{N_u,N_\theta}}^2}\chi_{\theta^{N_u,N_\theta}}' |\bm^{N_u,N_\theta}|^2 dx \\
	& =  \frac{\lambda}{\varepsilon} \int_{\dom} F'(\theta^{N_u,N_\theta})dx-\int_{\dom} \frac{\mu_0}{2\chi_{\theta^{N_u,N_\theta}}^2}\chi_{\theta^{N_u,N_\theta}}' |\bm^{N_u,N_\theta}|^2 dx,
	\end{split}
\end{equation*}
thus
\begin{equation}\label{eq:psiaverageestimate}
	\left|\int_{\dom} \psi^{N_u,N_\theta} dx \right|\leq C(1 +  \norm{\theta^{N_u,N_\theta}}_{L^1(\dom)})+ C \norm{\bm^{N_u,N_\theta}}_{L^2}\leq C.
\end{equation}
This implies with the Poincar\'e inequality that $\psi^{N_u,N_\theta} \in L^2(0,T;H^1(\dom))$ uniformly with respect to $N_\theta$.

Next, we derive uniform bounds on the time derivatives.  Integrating equation~\eqref{eq:NSdiscrete2} in time and taking $v\in L^4(0,T;\V)$ as a test function in~\eqref{eq:NSdiscrete2}, and using the identity~\eqref{eq:kelvin} for the Kelvin force and~\eqref{eq:capillary} for the capillary force, we can rewrite it as
\begin{align*}
		\int_0^T(\bub_t,v)dt &=\underbrace{\int_0^Tb_u(\bub,P^u_{N_u}v,\bub)dt}_{(i)} -\underbrace{\int_0^T(\nu_{\thb} T(\bub),T( P^u_{N_u}v)) dt}_{(ii)}\\
		& -\underbrace{\mu_0\int_0^T(((\bmb+\bhb)\cdot\Grad)P^u_{N_u}v,\bhb)dt}_{(iii)} +\underbrace{\eps\lambda\int_0^T(\Grad\thb\otimes\Grad\thb,\Grad P^u_{N_u}v)dt}_{(iv)} 
\end{align*}
using the definition of the $L^2$-projection $P^u_{N_u}$. We estimate each of the terms on the right hand side. Using the Ladyshenskaya inequality and the energy estimate bounds~\eqref{eq:uniformboundsenergy}, we have for the first term,
\begin{align*}
	|(i)|& \leq \int_0^T\norm{\bub}_{L^4}^2\norm{\Grad P^u_{N_u} v}_{L^2} dt \\
	& \leq \norm{\bub}_{L^\infty(0,T;L^2(\dom))}^{1/2}\int_0^T\norm{\Grad\bub}_{L^2}^{3/2}\norm{\Grad P^u_{N_u} v}_{L^2} dt \\
	& \leq C \norm{\Grad \bub}_{L^2(0,T;L^2(\dom))}\int_0^T \norm{\Grad v}_{L^2}^4 dt\\
	& \leq C \norm{v}_{L^4(0,T;H^1(\dom))}.
\end{align*}
For the second term, we have
\begin{align*}
	|(ii)|& \leq \max\{\nu_f,\nu_w\}\norm{\Grad\bub}_{L^2([0,T]\times\dom)}\norm{\Grad P^u_{N_u} v}_{L^2([0,T]\times\dom)}\\
	& \leq C \norm{v}_{L^2([0,T];H^1(\dom))}.
\end{align*}
 For the third term, we integrate by parts and then estimate using~\eqref{eq:uniforminallbutsigma}
 \begin{align*}
 	|(iii)|&\leq \mu_0 \int_0^T\norm{\bhb}_{L^4}\left(\norm{\bhb}_{L^4}+\norm{\bmb}_{L^4}\right)\norm{\Grad P^u_{N_u} v}_{L^2}dt \\
 	& \leq C \int_0^T \norm{\Grad v}_{L^2(\dom)} dt\\
 	& \leq C \norm{v}_{L^2(0,T;H^1(\dom))}.
 \end{align*}
 Finally, we can estimate the fourth term using the a priori bounds,~\eqref{eq:uniformboundsenergy},~\eqref{eq:uniforminallbutsigma}, and the Gagliardo-Nirenberg inequality,
 \begin{align*}
 	|(iv)|& = \left|\varepsilon\lambda\int_0^T\int_{\dom} (\Grad\thb\cdot\Grad) P_{N_u}^u v \cdot \Grad\thb dx dt\right|\\
 	&\leq \varepsilon\lambda \int_0^T \norm{\Grad\thb}_{L^4}^2\norm{\Grad P^u_{N_u}v}_{L^2} dt\\
 	& \leq C \int_0^T \left( \norm{\Grad^2 \thb}_{L^2}^{3/2}\norm{\Grad\thb}_{L^2}^{1/2}+\norm{\Grad\thb}_{L^2}\right)\norm{\Grad v}_{L^2} dt\\
 	&  \leq C( \norm{\Grad^2 \thb}_{L^2(0,T;L^2(\dom))}^{2} +1)\int_0^T \norm{\Grad v}_{L^2}^{4}dt\\
 	& \leq C \norm{v}_{L^4(0,T;H^1(\dom))}.
 \end{align*}
 Thus, taking the supremum over $v\in L^4(0,T;\V)$, we obtain
 \begin{equation*}
 	\sup_{v\in L^4(0,T;\V)}\frac{ \left| \int_0^T (\bub_t,v) dt \right|}{\norm{v}_{L^4(0,T;\V)}} \leq C,
 \end{equation*}
 and hence $\bub\in L^{4/3}(0,T;(\V)^*)$ uniformly in  $N_\theta$. We continue to derive a bound on the time derivative of $\thb$.  We take as a test function $P^\theta_{N_\theta} z$ for $z\in L^4(0,T;H^1(\dom))$ in~\eqref{eq:phasefielddiscrete2} and integrate in time:
 \begin{align*}
 	\int_0^T(\thb_t,z ) dt &= \underbrace{\int_0^T b_\theta(\bub,P_{N_\theta}^\theta z,\thb) dt}_{(i)} -\underbrace{\kappa\int_0^T(\Grad\psib,\Grad P_{N_\theta}^\theta z) dt}_{(ii)},
 \end{align*}
 using the definition of the $L^2$-projection $P^\theta_{N_\theta}$. We estimate the terms on the right hand side, starting with the first one: Using the uniform estimates~\eqref{eq:uniformboundsenergy}, Ladyshenskaya's inequality, and the stability of $P^\theta_{N_\theta}$ on $H^1(\dom)$,
 \begin{align*}
 	|(i)|& \leq \int_0^T \norm{\Grad P_{N_\theta}^\theta z}_{L^2(\dom)}\norm{\bub}_{L^4(\dom)}\norm{\thb}_{L^4(\dom)}dt \\
 	&\leq C\int_0^T \norm{\Grad P_{N_\theta}^\theta z}_{L^2(\dom)}\norm{\bub}_{L^2(\dom)}^{1/4}\norm{\Grad\bub}_{L^2(\dom)}^{3/4}\norm{\thb}_{L^2(\dom)}^{1/4} \norm{\Grad \thb}_{L^2(\dom)}^{3/4}dt \\
 	&\leq C \int_0^T \norm{\Grad z}_{L^2(\dom)}\norm{\Grad\bub}_{L^2(\dom)}^{3/4} dt \\
 	&\leq C\left( \int_0^T \norm{\Grad z}_{L^2(\dom)}^4 dt\right)^{1/4} \norm{\Grad\bub}_{L^2(0,T;L^2(\dom))}\\
 	&\leq C \norm{z}_{L^4(0,T;H^1(\dom))}.
 \end{align*}
 We continue to estimate the second term on the right hand side
 \begin{equation*}
 	|(ii)| \leq \kappa \norm{\Grad\psib}_{L^2(0,T;L^2(\dom))}\norm{\Grad P_{N_\theta}^\theta z}_{L^2([0,T]\times\dom)}\leq C \norm{z}_{L^2(0,T;H^1(\dom))}.
 \end{equation*}
 Thus, taking the supremum over $z\in L^4(0,T;H^1(\dom))$, we obtain
 \begin{equation}\label{eq:timedertheta}
 	\sup_{z\in L^4(0,T;H^1(\dom))}\frac{ \left| \int_0^T (\thb_t,z) dt \right|}{\norm{z}_{L^4(0,T;H^1(\dom))}} \leq C,
 \end{equation}
 thus $\partial_t \thb\in L^{4/3}(0,T;(H^1(\dom))^*)$ uniformly with respect to   $N_\theta$. 
 Next, we derive an estimate for the time derivative of $\bmb$. Again, we take a test function $w\in L^2(0,T;H^3(\dom))$ in~\eqref{eq:meqdiscrete2} and estimate all the terms using the a priori estimates. We have
 \begin{align*}
 	\int_0^T (\bmb_t,w) dt & = -\underbrace{ \int_0^T b_m(\bub,\bmb,w) dt}_{(i)} -\underbrace{\frac{1}{\tau}\int_0^T\int_{\dom} (\bmb-(\chi_{\thb}\bhb)_\eta)\cdot w dx }_{(ii)}.
 \end{align*}

 We estimate the two terms on the right hand side:
 \begin{equation*}
 	|(i)| \leq \int_0^T \norm{\bub}_{L^2}\norm{\bmb}_{L^2}\norm{\Grad w}_{L^\infty} dt \leq C \norm{w}_{L^2(0,T;H^3(\dom))}, 
 \end{equation*}
 using Sobolev embeddings. For the second term, we have
 \begin{equation*}
 	|(ii)|\leq C(\norm{\bmb}_{L^2([0,T]\times\dom)}+\norm{\bhb}_{L^2([0,T]\times\dom)})\norm{w}_{L^2([0,T]\times\dom)}.
 \end{equation*}
 Thus, taking the supremum over $w\in L^2(0,T;H^3(\dom))$, we obtain
 \begin{equation}\label{eq:mtimecont}
 	\sup_{w\in L^2(0,T;H^3(\dom))}\frac{ \left| \int_0^T (\bmb_t,w) dt \right|}{\norm{w}_{L^2(0,T;H^3(\dom))}} \leq C,
 \end{equation}
 uniformly in the parameter  $N_\theta$. Thus $\bmb_t\in L^2(0,T; H^{-3}(\dom))$. To obtain an estimate for $\bhb$, we note that we can decompose $w\in L^2(\dom)$ using the Leray decomposition (or Helmholtz decomposition) as $w =g + \Grad f$, where $f\in H^1(\dom)$ and $g\in L^2_{\text{div}}(\dom)$ and $g$ and $\Grad f$ are orthogonal with respect to the $L^2$-inner product. Then taking a test function $w=(g+\Grad f)\in L^2(0,T;H^3(\dom))$, we have
 \begin{equation*}
 	(\bhb_t,w) = (\Grad\phb_t,w) = (\Grad\phb_t, \Grad f),
 \end{equation*}
 since $g$ is divergence free and $g\cdot\mathbf{n}=0$ on $\partial\dom$.
Then using  equation~\eqref{eq:bcmaxwell2} (after taking time derivatives), we have
 \begin{equation*}
 	(\bhb_t,w) = (\Grad\phb_t,\Grad f) = -(\bmb_t,\Grad f)+\int_{\partial\dom} (\bh_a)_t\cdot \mathbf{n} f dS,
 \end{equation*}
 and the terms on the right hand side are bounded since $f\in H^4(\dom)$ and we already showed that $\bmb_t\in L^2(0,T;H^{-3}(\dom))$ (and we assume that $(\bh_a)_t\in W^{1,\infty}(0,T;L^2(\partial\dom))$). So we obtain that also $\bhb_t\in L^2(0,T;H^{-3}(\dom))$ uniformly in the parameter  $ N_\theta$.  To get a bound for $\phb_t$, we use that the divergence operator from $H^1_0(\dom)\to L^2(\dom)/\R$ is onto, c.f.~\cite[Lemma 2.4, Chapter 1]{Temam2001}. Clearly, this is still true, when we restrict to $H^3(\dom)$ as it is a linear mapping. Thus given $g\in L^2(0,T;H^3(\dom))$, there is $w\in H^3(\dom)\cap H^1_0(\dom)$ such that $\Div w =g$. Hence, we have
 \begin{equation}\label{eq:phitimederivative}
 \int_0^T (\phb_t,g) dt = \int_0^T (\phb_t,\Div w)dt = -\int_0^T (\Grad\phb_t,w)dt = -\int_0^T(\bhb_t,w)dt.
  \end{equation}
  This shows that since $\bhb_t\in L^2(0,T;H^{-3}(\dom))$, we must have $\phb_t\in L^2(0,T;H^{-3}(\dom))$ also.
 Using these estimates on the time derivatives combined with the a priori estimates from the energy inequality~\eqref{eq:uniformboundsenergy}, we obtain from the Aubin-Lions-Simon lemma~\cite{Simon1987} that up to a subsequence, still denoted by $N_\theta$,
 \begin{align}
 \label{eq:Nu-conv}
 \begin{aligned}
 	&\bu^{N_u,N_\theta}\stackrel{N_\theta\to\infty}{\longrightarrow} \bu^{N_u},\quad \text{in }\, L^2(0,T;L^2_{\text{div}}(\dom))\\
 	& \theta^{N_u,N_\theta}\stackrel{N_\theta\to\infty}{\longrightarrow} \theta^{N_u},\quad \text{in }\, L^2(0,T;H^1(\dom))\cap C(0,T;L^2(\dom))\\
 		&\bm^{N_u,N_\theta}\stackrel{N_\theta\to\infty}{\longrightarrow} \bm^{N_u},\quad \text{in }\, C(0,T;L^4(\dom))\\
 		&\bh^{N_u,N_\theta}\stackrel{N_\theta\to\infty}{\longrightarrow} \bh^{N_u},\quad \text{in }\, C(0,T;L^4(\dom)).
 \end{aligned}
 \end{align}
 Using the Banach-Alaoglu theorem, we also get up to a subsequence
 \begin{equation*}
 	\psi^{N_u,N_\theta}\stackrel{N_\theta\to\infty}{\weak} \psi^{N_u}\quad \text{in }\, L^2(0,T;H^1(\dom)).
 \end{equation*}

 Thus the limit $(\bu^{N_u},\bm^{N_u},\bh^{N_u},\theta^{N_u},\psi^{N_u})$ satisfies, combining weak and strong convergences
 \begin{subequations}\label{eq:Galerkinsolve3}
 	\begin{align}
 		&(\buc_t,v)+b_u(\buc,\buc,v)=-(\nu_{\thc} T(\buc),T(v))-\mu_0(((\bmc+\bhc)\cdot\Grad)v,\bhc)\label{eq:NSdiscrete3}\\
 		&\hphantom{(\buc_t,v)+b_u(\buc,\buc,v)=-}+\eps\lambda(\Grad\thc\otimes\Grad\thc,\Grad v), \quad \forall v\in \mathcal{U}_{N_u},\notag \\
 		&\int_0^T(\bmc_t,w)dt+\int_0^T b_m(\buc,\bmc,w)dt =-\frac{1}{\tau}\int_0^T (\bmc-\chi_{\thc}\bhc,w)dt ,\label{eq:meqdiscrete3}\\
 		&\hphantom{\int_0^T(\bmc_t,w)dt+\int_0^T b_m(\buc,\bmc,w)dt =-\frac{1}{\tau}\int_0^T } \forall w\in L^4(0,T;L^2(\dom)) ,\notag \\
 		&\Grad\phc=\bhc,\quad (\Grad \phc,\Grad \phi) =-(\bmc,\Grad\phi)+\int_{\partial\dom}\bh_a\cdot\mathbf{n}\phi dS,\quad \forall \phi\in H^1(\dom),\label{eq:bcmaxwell3}\\
 &\int_0^T	(\thc_t,z )+b_\theta(\buc,\thc,z) dt =-\kappa\int_0^T(\Grad\psic,\Grad z)dt ,\quad \forall z\in L^4(0,T;H^1(\dom)),\label{eq:phasefielddiscrete3}\\
 	&\int_0^T(\psic,z)dt =\int_0^T\left( \lambda\eps(\Grad\thc,\Grad z) +\frac{\lambda}{\eps}(F'(\thc),z)-\left(\frac{\mu_0}{2\chi_{\thc}^2}\chi'_{\thc}|\bmc|^2,z\right)\right) dt , \label{eq:psi3}\\
 	&\hphantom{\int_0^T(\psic,z)dt =\int_0^T( \eps\lambda(\Grad\thc,\Grad z) +\frac{\lambda}{\eps}(F'(\thc),z)-} \forall z\in L^2([0,T]\times\dom).\notag
 \end{align}
 \end{subequations}
 In the energy inequality~\eqref{eq:energyuthetadisc}, we obtain when passing  $N_\theta\to \infty$, (after integrating in time)
 \begin{multline}\label{eq:energyuthetadisc3}
 	\frac12\int_{\dom}\left(|\bu^{N_u}(t)|^2 + \mu_0\frac{|\bm^{N_u}(t)|^2}{\chi_{\theta^{N_u}(t)}}+ \mu_0|\bh^{N_u}(t)|^2+\varepsilon\lambda|\Grad\theta^{N_u}(t)|^2 + \frac{2\lambda}{\varepsilon}F(\theta^{N_u}(t))\right) dx\\
 	 + \int_0^t\int_{\dom} \left(\nu_{\theta^{N_u}} |T(\bu)|^2 + \kappa |\Grad\psi^{N_u}|^2 \right)dx ds \\
 	\leq 	\frac12\int_{\dom}\left(|\bu_0|^2 + \mu_0\frac{|\bm^{N_u}_0|^2}{\chi_{\theta_0}}+ \mu_0|\bh_0|^2+\varepsilon\lambda|\Grad\theta_0|^2 + \frac{2\lambda}{\varepsilon}F(\theta_0)\right) dx\\
 	- \int_0^t\frac{\mu_0}{\tau}\int_{\dom}\frac{1}{\chi_{\theta^{N_u}}}\left|\bm^{N_u}-\chi_{\theta^{N_u}}\bh^{N_u}\right|^2 dx ds
 	+ \int_0^t \mu_0\int_{\partial\dom}(\bh_a)_t\cdot\mathbf{n}\varphi^{N_u} dS ds.
 \end{multline}
 since the other two terms go to zero:  
 \begin{equation*}
 	\begin{split}
 			\left|\int_0^t(iii)ds\right| & = \left|\frac{\lambda}{\varepsilon}\int_0^t\int_{\dom} (\bub\cdot\Grad)\thb\cdot (I-P_{N_\theta}^\theta) F'(\thb) dxds \right|\\
 			& \leq C \norm{\bub}_{L^\infty([0,T]\times\dom)}\norm{\Grad\thb}_{L^\infty(0,T;L^2(\dom))}\int_0^T \norm{(I-P_{N_\theta}^\theta)F'(\thb)}_{L^2}dt \\
 		&\leq C(N_u) \int_0^T \norm{(I-P_{N_\theta}^\theta)F'(\thb)}_{L^2}dt, 
 	\end{split}
  \end{equation*}
  where we use the fact that $\bub$ belongs to a finite-dimensional Galerkin space and \eqref{eq:uniformboundsenergy} to bound the $\bub$ and $\nabla \thb$-terms. Since $\thb \to \theta^{N_u}$ in $C(0,T;L^2(\dom))$ by \eqref{eq:Nu-conv}, it follows that also $F'(\thb) \to F'(\theta^{N_u})$. This implies that $\int_0^T \norm{(I-P_{N_\theta}^\theta)F'(\thb)}_{L^2}dt \to 0$ as $N_\theta \to \infty$.
  For the other term, we similarly have
 \begin{equation*}
 	\begin{split}
 		\left|\int_0^t (iv)ds \right|& \leq \left|\frac{\mu_0}{2}\int_0^t\int_{\dom} (\bub\cdot\Grad)\thb \cdot (I-P_{N_\theta}^\theta) \left(\frac{\chi_{\thb}'}{\chi_{\thb}^2}|\bmb|^2 \right)dx ds\right| \\
        & \leq C(N_u) \int_0^T \norm{(I-P_{N_\theta}^\theta)\left(\frac{\chi_{\thb}'}{\chi_{\thb}^2}|\bmb|^2\right)}_{L^{2}(\dom)} dt.
	\end{split}
 \end{equation*}
 Since, once again, $\frac{\chi'_{\thb}}{\chi^2_{\thb}} |\bmb|^2$ converges strongly in $C(0,T;L^2(\dom))$ by \eqref{eq:Nu-conv} (and owing to the uniform bounds on $\chi'_\theta$ and $\chi_\theta$), it follows that the $(I-P_{N_\theta}^\theta)$-projection converges to zero as $N_\theta\to \infty$.
 Using Gr\"onwall's inequality, the energy inequality~\eqref{eq:energyuthetadisc3} yields uniformly in $N_u$ and $\sigma=1/N_u$ that
 \begin{equation}
 	\label{eq:alluniformbounds}
 	\begin{split}
 	\norm{\buc}_{L^\infty(0,T;L^2(\dom))}+
 	\norm{\bmc}_{L^\infty(0,T;L^2(\dom))}+\norm{\bhc}_{L^\infty(0,T;L^2(\dom))} + \norm{\Grad\thc}_{L^\infty(0,T;L^2(\dom))} \\
 	+ \norm{\Grad\buc}_{L^2([0,T]\times\dom)} + \norm{\Grad\psic}_{L^2([0,T]\times\dom)}\leq C.
 	\end{split}
 \end{equation}
 To obtain a uniform $L^2$-bound for $\psi^{N_u}$, we need to use Poincar\'e inequality with a bound on the average of $\psi^{N_u}$ again. To do so, we note that the bound~\eqref{eq:psiaverageestimate} is still valid as all the terms on the right hand side are uniformly bounded with respect to $N_u$. Similarly, we derive a bound on $\norm{\thc(t)}_{L^2}$. Using $z=\mathbf{1}_{[0,t]}(t)$ as a test function in~\eqref{eq:phasefielddiscrete3}, we have
 \begin{equation*}
 	\int_{\dom} \thc(t,x)dx -\int_{\dom} \theta_0(x) dx = 0.
 \end{equation*}
 Thus the mass of $\thc$ is preserved over time. In particular, since it is initially bounded, it will stay bounded for all times. Then we can use the Poincar\'e inequality,~\eqref{eq:poincare} to obtain that $\thc\in L^\infty(0,T;L^2(\dom))$ uniformly with respect to $N_u$.
 \subsubsection{Sending $N_u\to\infty$}
 \label{sec:Nu-limit}
 The energy inequality~\eqref{eq:energyuthetadisc3} yields the uniform bounds~\eqref{eq:alluniformbounds}. 
 Now, we derive  new bounds on the time derivatives of $\buc$ and $\thc$ so that we can use the Aubin-Lions-Simon lemma once more to pass to the limit in $N_u$.
  Integrating equation~\eqref{eq:NSdiscrete3} in time and taking $v\in L^2(0,T;\V\cap H^3(\dom))$ as a test function,  
  we have
 \begin{align*}
 	\int_0^T(\buc_t,v)dt &=\underbrace{\int_0^Tb_u(\buc,P^u_{N_u}v,\buc)dt}_{(i)} -\underbrace{\int_0^T(\nu_{\thc} T(\buc),T( P^u_{N_u}v)) dt}_{(ii)}\\
 	&\quad -\underbrace{\mu_0\int_0^T(((\bmc+\bhc)\cdot\Grad)P^u_{N_u}v,\bhc)dt}_{(iii)} +\underbrace{\lambda\eps\int_0^T(\Grad\thc\otimes\Grad\thc,\Grad P^u_{N_u}v)dt}_{(iv)} 
 \end{align*}
 using the definition of the $L^2$-projection $P^u_{N_u}$. We estimate each of the terms on the right hand side. Using the Ladyshenskaya inequality and the energy estimate bounds~\eqref{eq:uniformboundsenergy}, we have for the first term, as before,
 \begin{align*}
 	|(i)|& \leq \int_0^T\norm{\buc}_{L^4}^2\norm{\Grad P^u_{N_u} v}_{L^2} dt \\
 	& \leq \norm{\buc}_{L^\infty(0,T;L^2(\dom))}^{1/2}\int_0^T\norm{\Grad\buc}_{L^2}^{3/2}\norm{\Grad P^u_{N_u} v}_{L^2} dt \\
 	& \leq C \norm{\Grad \buc}_{L^2(0,T;L^2(\dom))}\int_0^T \norm{\Grad v}_{L^2}^4 dt\\
 	& \leq C \norm{v}_{L^4(0,T;H^1(\dom))}.
 \end{align*}
 For the second term, we have, also similarly as before,
 \begin{align*}
 	|(ii)|& \leq 2\max\{\nu_f,\nu_w\}\norm{\Grad\buc}_{L^2([0,T]\times\dom)}\norm{\Grad P^u_{N_u} v}_{L^2([0,T]\times\dom)}\\
 	& \leq C \norm{v}_{L^2([0,T];H^1(\dom))}.
 \end{align*}
 For the third term, we integrate by parts and then estimate
 \begin{align*}
 	|(iii)|&\leq \mu_0 \int_0^T\norm{\bhc}_{L^2}\left(\norm{\bhc}_{L^2}+\norm{\bmc}_{L^2}\right)\norm{\Grad P^u_{N_u} v}_{L^\infty}dt \\
 	&\leq C\int_0^T\norm{\bhc}_{L^2}\left(\norm{\bhc}_{L^2}+\norm{\bmc}_{L^2}\right)\norm{\Grad P^u_{N_u} v}_{H^2}dt \\
 	& \leq C \int_0^T \norm{ v}_{H^3(\dom)} dt\\
 	& \leq C \norm{v}_{L^1(0,T;H^3(\dom))}.
 \end{align*}
 Finally, we can estimate the fourth term using the a priori bounds,
 \begin{align*}
 	|(iv)|& = \left|\varepsilon\lambda\int_0^T\int_{\dom} (\Grad\thc\cdot\Grad) P_{N_u}^u v \cdot \Grad\thc dx dt\right|\\
 	&\leq \varepsilon\lambda \int_0^T \norm{\Grad\thc}_{L^2}^2\norm{\Grad P^u_{N_u}v}_{L^\infty} dt\\
 	&\leq C\int_0^T \norm{\Grad\thc}_{L^2}^2\norm{\Grad P^u_{N_u}v}_{H^2} dt\\
 		&\leq C\int_0^T \norm{v}_{H^3} dt\\
 	& = C \norm{v}_{L^1(0,T;H^3(\dom))}.
 \end{align*}
 Thus, taking the supremum over $v\in L^2(0,T;\V\cap H^3(\dom))$, we obtain
 \begin{equation}\label{eq:utimecont}
 	\sup_{v\in L^2(0,T;\V\cap H^3(\dom))}\frac{ \left| \int_0^T (\buc_t,v) dt \right|}{\norm{v}_{L^2(0,T;\V\cap H^3(\dom))}} \leq C,
 \end{equation}
 and hence $\buc_t\in L^{2}(0,T;(\V\cap H^3(\dom))^*)$ uniformly in $N_u$.  The estimate for the time derivative of $\thc$,~\eqref{eq:timedertheta}, is already uniform in $N_u$, therefore it still applies. Now, we pass $N_u\to\infty$. Since $\sigma = 1/N_u$, we then clearly have $\bm_0^{\sigma}\to \bm_0$ in $L^2(\dom)$. For the variables $\bu^{N_u}$ and $\theta^{N_u}$, we again use the Aubin-Lions-Simon lemma to conclude that 
 \begin{align*}
 	&\bu^{N_u}\stackrel{N_u\to\infty}{\longrightarrow} \bu,\quad \text{in }\, L^2(0,T;L^2_{\text{div}}(\dom))\\
 	& \theta^{N_u}\stackrel{N_u\to\infty}{\longrightarrow} \theta,\quad \text{in }\, C(0,T;L^2(\dom)).
 \end{align*}
 We can also use the Aubin-Lions Lemma for the variable $\varphi^{N_u}$ since the weak bound on the time derivative  of $\varphi^{N_u}$,~\eqref{eq:phitimederivative}, still applies and is uniform with respect to the parameter $N_u$. Thus
 \begin{equation}
 	\label{eq:varphiconv}
 	\varphi^{N_u}\stackrel{N_u\to\infty}{\longrightarrow} \varphi,\quad \text{in }\, L^2(0,T;L^2(\dom)).
 \end{equation}
 Furthermore, from the a priori bounds~\eqref{eq:alluniformbounds}, we obtain that 
 \begin{align*}
 	&\bm^{N_u}\stackrel{N_u\to\infty}{\weakstar} \bm,\quad \text{in }\, L^\infty(0,T;L^2(\dom))\\
 	& \bh^{N_u}\stackrel{N_u\to\infty}{\weakstar} \bh,\quad \text{in }\, L^\infty(0,T;L^2(\dom))\\
 	& \psi^{N_u}\stackrel{N_u\to\infty}{\weak} \psi,\quad \text{in }\, L^2(0,T;H^1(\dom)).
 \end{align*}
 Our goal is to derive strong convergence in $L^2([0,T]\times\dom)$ for $\bh^{N_u}$ and $\bm^{N_u}$. We will need this to be able to pass to the limit in the terms that are nonlinear in $\bm^{N_u}$ and $\bh^{N_u}$ in~\eqref{eq:Galerkinsolve3}.
  To do so, we first find the limiting equations for $\bm$ and $\bh$. Since the equation for $\bh^{N_u}$ is linear, we can pass to the limit in all the terms. For the equation for $\bm^{N_u}$, we pass to the limit using that $\bu^{N_u}$ and $\theta^{N_u}$ converge strongly in $L^2([0,T]\times\dom)$ and $\bm^{N_u}$ converges weakly in $L^2([0,T]\times\dom)$ choosing the test function sufficiently smooth. 
   The limiting formulation holds for a larger class of test functions in $L^{2}(0,T;H^3(\dom))$ using density of smooth functions in $L^{2}(0,T;H^3(\dom))$  and that $\bu\in L^\infty(0,T;L^2_{\text{div}}(\dom))\cap L^2(0,T;\V)$ and $\bm\in L^\infty(0,T;L^2(\dom))$. 
 So we have that $\bh$ and $\bm$ satisfy 
 \begin{align}
 	\int_0^T(\bm_t,w)dt-\int_0^T b_m(\bu,w,\bm)dt &=-\frac{1}{\tau}\int_0^T (\bm-\chi_{\theta}\bh,w)dt ,\quad \forall w\in L^{2}(0,T;H^3(\dom)), \label{eq:meqdiscrete4}\\
 	\Grad\varphi=\bh,\quad \int_0^T(\Grad \varphi,\Grad \phi)dt  &=-\int_0^T(\bm,\Grad\phi)dt+\int_0^T\int_{\partial\dom}\bh_a\cdot\mathbf{n}\phi dSdt ,\quad \forall \phi\in L^1(0,T;H^1(\dom)).\label{eq:bcmaxwell4}
 \end{align}
  Arguing as in~\cite[Section 3.2]{Nochetto2019}, we find that $\bm$ is a renormalized solution as in the sense of~\cite[Lemma 3.4]{Nochetto2019} and satisfies a similar identity as that in~\cite[Lemma 3.5]{Nochetto2019}, i.e., that we have
 \begin{equation}\label{eq:midentity}
 	 	\frac12\int_{\dom} |\bm(t,x)|^2 dx = \frac12\int_{\dom} |\bm_0|^2 dx -\frac{1}{\tau}\int_0^t\int_{\dom}(|\bm(s,x)|^2-\bm(s,x)\cdot \chi_{\theta} \bh(s,x) dx ds.
 \end{equation}
 On the other hand, we have that the approximations satisfy
  \begin{equation}\label{eq:midentityapprox}
  		\frac12\int_{\dom} |\bm^{N_u}(t,x)|^2 dx\leq \frac12\int_{\dom} |\bm^{N_u}_0|^2 dx
  	-\frac{1}{\tau}\int_0^t\int_{\dom}(|\bm^{N_u}(s,x)|^2-\chi_{\theta^{N_u}}\bm^{N_u}(s,x)\cdot  \bh^{N_u}(s,x)) dx ds.
 	\end{equation}
 Sending $N_u\to \infty$ in this identity, we obtain for the weak limits of $|\bm^{N_u}|^2$ and $\bm^{N_u}\cdot \bh^{N_u}$, (where we use bars to denote the weak limits)
  \begin{equation}\label{eq:midentity1}
 		\frac12\int_{\dom} \overline{|\bm(t,x)|^2} dx\leq \frac12\int_{\dom} |\bm_0|^2 dx-\frac{1}{\tau}\int_0^t\int_{\dom}(\overline{|\bm(s,x)|^2}-\chi_{\theta}\overline{\bm(s,x)\cdot  \bh(s,x)}) dx ds.
 \end{equation}
 Subtracting~\eqref{eq:midentity} from this, we have
   \begin{multline}\label{eq:midentity2}
 		\frac12\int_{\dom}\left( \overline{|\bm(t,x)|^2} - |\bm(t,x)|^2\right) dx\\
 		\leq -\frac{1}{\tau}\int_0^t\int_{\dom}(\overline{|\bm(s,x)|^2}-|\bm(s,x)|^2-\chi_{\theta}\overline{\bm(s,x)\cdot  \bh(s,x)}+\chi_{\theta}\bm(s,x)\cdot  \bh(s,x)) dx ds.
 \end{multline}
 Next, we derive an identity for $\overline{\bh\cdot\bm}$. For this, we consider the equation for $\varphi$. We take  $\phi=\varphi g$ as a test function in~\eqref{eq:bcmaxwell4}, where $g$ is a smooth function. Then we have
 \begin{equation}
 	\label{eq:hidentity}
 	\begin{split}
 		\int_0^T \int_{\dom} \left(|\bh|^2 g + \bh\cdot\Grad g \varphi \right) dxdt&  =	\int_0^T(\bh,\Grad(\varphi g))dt  = -\int_0^T(\bm,\Grad(\varphi g))dt + \int_0^T\int_{\partial\dom} \bh_a\cdot \mathbf{n}\varphi g dSdt \\
 		& =  -\int_0^T\int_{\dom}\left(\bm\cdot\bh g + \bm\cdot\Grad g \varphi\right)dxdt + \int_0^T\int_{\partial\dom} \bh_a\cdot \mathbf{n}\varphi g dSdt.
 	\end{split}
 \end{equation}
 On the other hand, we could have taken $\phi = \varphi^{N_u}g$ as a  test function in~\eqref{eq:bcmaxwell3} and integrated over time, which would yield
  \begin{equation}
 	\label{eq:hidentity2}
 	\begin{split}
 		\int_0^T \int_{\dom} \left(|\bh^{N_u}|^2 g + \bh^{N_u}\cdot\Grad g \varphi^{N_u} \right) dxdt&  =	\int_0^T (\bh^{N_u},\Grad(\varphi^{N_u} g))  dt\\
 		& = -\int_0^T(\bm^{N_u},\Grad(\varphi^{N_u} g))dt + \int_0^T\int_{\partial\dom} \bh_a\cdot \mathbf{n}\varphi^{N_u} g dSdt \\
 		& =  -\int_0^T\int_{\dom} \left(\bm^{N_u}\cdot\bh^{N_u} g + \bm^{N_u}\cdot\Grad g \varphi^{N_u}\right)dx dt \\
 		&\quad + \int_0^T\int_{\partial\dom} \bh_a\cdot \mathbf{n}\varphi^{N_u} g dSdt.
 	\end{split}
 \end{equation}
 Passing to the limit on the right and left hand side and using that $\varphi^{N_u}\to \varphi$ strongly, and $\bhc\weak\bh$, $\bmc\weak\bm$ weakly in $L^2([0,T]\times\dom)$, we obtain (using again bars to denote the weak limits):
 \begin{equation}
 	\label{eq:hidentitywl}
 	\begin{split}
 		\int_0^T \int_{\dom} \left(\overline{|\bh|^2} g + \bh\cdot\Grad g \varphi \right) dxdt
 		& =  -\int_0^T\int_{\dom} \left(\overline{\bm\cdot\bh} g + \bm\cdot\Grad g \varphi\right)dx dt + \int_0^T\int_{\partial\dom} \bh_a\cdot \mathbf{n}\varphi g dSdt.
 	\end{split}
 \end{equation}
 We subtract this identity from~\eqref{eq:hidentity} and rearrange terms to obtain
  \begin{equation}
 	\label{eq:hidentitycombined}
 	\begin{split}
 		\int_0^T \int_{\dom} \left(\overline{|\bh|^2}-|\bh|^2\right) g dxdt
 		& =  -\int_0^T\int_{\dom}\left(\overline{\bm\cdot\bh}-\bm\cdot\bh\right) g  dx dt.
 	\end{split}
 \end{equation}
 Since this is true for any smooth $g$, we must have almost everywhere that $\overline{|\bh|^2}-|\bh|^2 = -\overline{\bm\cdot\bh}+\bm\cdot\bh$. We use this in~\eqref{eq:midentity2} to estimate the right hand side as
   \begin{equation}\label{eq:midentity3}
 		\frac12\int_{\dom}\left( \overline{|\bm(t,x)|^2} - |\bm(t,x)|^2\right) dx\leq -\frac{1}{\tau}\int_0^t\int_{\dom}(\overline{|\bm(s,x)|^2}-|\bm(s,x)|^2+\chi_{\theta}\left(\overline{|\bh(s,x)|^2}-|\bh(s,x)|^2\right)) dx ds.
 \end{equation}
 Weak limits satisfy, c.f~\cite[Corollary 3.33]{Novotny2004}, 
 \begin{equation}\label{eq:wl}
 |f|^2\leq \overline{|f|^2},\quad \int |f|^2\leq \int \overline{|f|^2}
 \end{equation}
 ($f\in \{\bh,\bm\}$),
 therefore since $\chi_\theta\geq 0$, the right hand side of~\eqref{eq:midentity3} must be less than zero. So we have
 \begin{equation}\label{eq:midentityfinal}
 	\frac12\int_{\dom}\left( \overline{|\bm(t,x)|^2} - |\bm(t,x)|^2\right) dx\leq 0,
 \end{equation} 
 however, by~\eqref{eq:wl}, we must also have that the intergrand on the  left hand side is greater or equal to zero almost everywhere, therefore, we must have $\overline{|\bm(t,x)|^2} - |\bm(t,x)|^2=0$ a.e. . By~\cite[Theorem 1.1.1 (iii)]{Evans1990}, this implies that $\bm^{N_u}\to \bm$ strongly in $L^2([0,T]\times\dom)$. Using this strong convergence of $\bmc$ in~\eqref{eq:hidentitycombined} on the right hand side, we see that also $|\bh|^2 = \overline{|\bh|^2}$ almost everywhere and therefore $\bh^{N_u}$ also converges strongly to $\bh$ in $L^2([0,T]\times\dom)$. Now we want to pass to the limit in the remaining equations. For equation~\eqref{eq:phasefielddiscrete3}, we use the strong convergence of $\bu^{N_u}$ in $L^2([0,T]\times\dom)$ and the weak convergence of $\Grad\thc$ and of $\Grad\psic$ in $L^2([0,T]\times\dom)$ to pass to the limit in all terms and obtain
 \begin{equation}
 	\label{eq:phasefieldfinal}
 	 \int_0^T	(\theta_t,z )+b_\theta(\bu,\theta,z) dt =-\kappa\int_0^T(\Grad\psi,\Grad z)dt
 \end{equation}
 for all sufficiently smooth test functions.  In equation~\eqref{eq:psi3}, we use the strong convergence of $\theta^{N_u}$ in $L^2([0,T]\times\dom)$ to pass to the limit in the nonlinear terms in $\theta$, the weak convergence of $\psi$ and $\Grad\theta$ for the two linear terms and finally the strong convergence of $\theta^{N_u}$ in $L^2([0,T]\times\dom)$ -- which implies almost everywhere convergence -- combined with the strong convergence of $\bm^{N_u}$ to pass to the limit in the term $\int_0^T\left(\frac{\mu_0}{2\chi_\theta^2}\chi'_\theta|\bm|^2,z\right)dt$. Here we also use Lemma~\ref{lem:kennethlemma} to combine the bounded convergence with $L^1$-convergence. Thus the limit satisfies
 \begin{equation}
 	\label{eq:phasefield2final}
 		\int_0^T(\psi,z)dt =\int_0^T\left( \lambda\eps(\Grad\theta,\Grad z) +\frac{\lambda}{\eps}(F'(\theta),z)-\left(\frac{\mu_0}{2\chi_\theta^2}\chi'_\theta|\bm|^2,z\right)\right) dt .
 \end{equation}
 It remains to deal with the equation for $\bu^{N_u}$:
 \begin{align*}
 	&(\bu^{N_u}_t,v)+b_u(\bu^{N_u},\bu^{N_u},v) =-(\nu_{\theta^{N_u}} T(\bu^{N_u}),T( v))-\mu_0(((\bm^{N_u}+\bh^{N_u})\cdot\Grad)v,\bh^{N_u})\\
 	&\qquad +\lambda\eps(\Grad\theta^{N_u}\otimes\Grad\theta^{N_u},\Grad v), \quad \forall v\in \mathcal{U}_{N_u}.
 \end{align*}
 In order to pass to the limit in this term, we need strong convergence of the gradient of $\theta^{N_u}$. We achieve this with an argument that is similar to the one we used to derive strong convergence for $\bm^{N_u}$. 
 We consider the equation for $\psi$, \eqref{eq:phasefield2final}, take  $z=\theta$ as a test function (which is admissible since $\theta\in L^\infty(0,T;H^1(\dom))$ and $\theta \chi_\theta'$ is bounded by~\eqref{eq:assumptions})
 \begin{equation}\label{eq:psilim}
 \int_0^T	\int_{\dom}\psi\theta dxdt  =\int_0^T\left[ \lambda\eps\int_{\dom}|\Grad\theta|^2 dx +\frac{\lambda}{\eps}\int_{\dom} F'(\theta)\theta dx -\int_{\dom}\frac{\mu_0}{2\chi_\theta^2}\chi'_\theta\theta |\bm|^2 dx\right] dt. 
 \end{equation}
 By the assumptions~\eqref{eq:assumptions}, the integrand in the last term is in $L^1$ and so the term is bounded. Similarly, we use as a test function in~\eqref{eq:psi3} $z=\theta^{N_u}$:
 \begin{equation}\label{eq:psilimapprox}
 		\int_0^T(\psi^{N_u},\theta^{N_u})dt =\int_0^T\left(\lambda \eps(\Grad\theta^{N_u},\Grad \theta^{N_u}) +\frac{\lambda}{\eps}(F'(\theta^{N_u}),\theta^{N_u})-\left(\frac{\mu_0}{2\chi_{\theta^{N_u}}^2}\chi'_{\theta^{N_u}}|\bm^{N_u}|^2,\theta^{N_u}\right)\right) dt
 \end{equation}
 and pass to the limit.
 By the previous considerations, a subsequence of $\theta^{N_u}$ converges strongly in $L^p([0,T]\times\dom)$ for $p<6$, $\bm^{N_u}$ converges in $L^2([0,T]\times\dom)$, 
 $\frac{\chi'_{\theta^{N_u}}\theta^{N_u}}{\chi^2_{\theta^{N_u}}}$ is a bounded function of $\theta^{N_u}$ by the assumptions~\eqref{eq:assumptions}
 and therefore converges almost everywhere (up to extraction of a subsequence), $\psi^{N_u}$ converges  weakly in $L^2([0,T]\times\dom)$, and $F'(\theta^{N_u})\theta^{N_u}$ converges strongly in $L^1([0,T]\times\dom)$ by the properties of $F$,~\eqref{eq:Fass}. Therefore, we obtain for the limits:
 \begin{equation}\label{eq:weaklim}
 	\int_0^T\int_{\dom}\psi\theta dxdt  =\int_0^T\left[ \lambda\eps\int_{\dom}\overline{|\Grad\theta|^2} dx +\frac{\lambda}{\eps}\int_{\dom} F'(\theta)\theta dx -\int_{\dom}\frac{\mu_0}{2\chi_\theta^2}\chi'_\theta\theta |\bm|^2 dx\right]dt. 
 \end{equation}
 The bar denotes the weak limit, as before. Combining~\eqref{eq:psilim} and~\eqref{eq:weaklim}, we see that
 \begin{equation}\label{eq:convofnorms}
 	\int_0^T\int_{\dom}\overline{|\Grad\theta|^2} dxdt  = \int_0^T\int_{\dom}|\Grad\theta|^2 dxdt,
 \end{equation}
 and hence $\Grad\thc$ converges strongly in $L^2([0,T]\times\dom)$, using again~\cite[Theorem 1.1.1 (iii)]{Evans1990}. This is enough to pass to the limit in the capillary force term in the fluids equation:  For a sufficiently smooth function $v\in C^3_c([0,T]\times\dom)$, we take $P^u_{N_u}v$ as a test function in~\eqref{eq:NSdiscrete3} and integrate over $[0,T]$. We have
  \begin{align*}
 	&\underbrace{\int_0^T(\bu^{N_u}_t,v)dt}_{(i)} +\underbrace{\int_0^T(\bu^{N_u}_t,P^u_{N_u}v-v)dt}_{(ii)} -\underbrace{\int_0^T b_u(\bu^{N_u},v,\bu^{N_u})dt}_{(iii)} \\
 	&-\underbrace{\int_0^T b_u(\bu^{N_u},P^u_{N_u}v-v,\bu^{N_u})dt }_{(iv)} =-\underbrace{\int_0^T(\nu_{\theta^{N_u}} T(\bu^{N_u}), T( v))dt}_{(v)}-\underbrace{\int_0^T(\nu_{\theta^{N_u}} T(\bu^{N_u}),T( P^u_{N_u}v)-T( v))dt}_{(vi)}\\ &-\underbrace{\mu_0\int_0^T(((\bm^{N_u}+\bh^{N_u})\cdot\Grad)v,\bh^{N_u})dt}_{(vii)} -\underbrace{\mu_0\int_0^T(((\bm^{N_u}+\bh^{N_u})\cdot\Grad)(P^u_{N_u}v-v),\bh^{N_u})dt}_{(viii)}\\
 	&\qquad +\underbrace{\eps\lambda\int_0^T(\Grad\theta^{N_u}\otimes \Grad\theta^{N_u},\Grad v)dt}_{(ix)} +\underbrace{\eps\lambda\int_0^T(\Grad\theta^{N_u}\otimes \Grad\theta^{N_u},\Grad (P^u_{N_u}v-v))dt }_{(x)}.
 \end{align*}
 We compute the limits of all the terms. We integrate by parts in the first term and use the weak convergence of $\bu^{N_u}$, 
 \begin{align*}
 	(i) = -\int_0^T (\bu^{N_u},v_t) dt +(\bu^{N_u}_0,v(0,\cdot))\stackrel{N_u\to\infty}{\longrightarrow} -\int_0^T(\bu,v_t)dt +(\bu_0,v(0,\cdot)) = \int_0^T(\bu_t,v)dt.
 \end{align*}
 For the second term, we use the $L^2$-orthogonality of the projection to conclude that 
 \begin{align*}
 	(ii)=0.
 \end{align*}
 For the third term, we use the strong convergence of $\bu^{N_u}$ in $L^2([0,T]\times\dom)$ to deduce that
 \begin{equation*}
 	(iii)\stackrel{N_u\to \infty}{\longrightarrow} \int_0^T b_u(\bu,v,\bu) dt.
 \end{equation*}
 For the fourth term, we use the properties of the projection $P^u_{N_u}$:
 \begin{equation*}
 	\begin{split}
 		|(iv)|&\leq \int_0^T\norm{\bu^{N_u}}_{L^4}^2 \norm{v-P_{N_u}^u v}_{L^2} dt\\
 		& \leq C\int_0^T \norm{\Grad\bu^{N_u}}_{L^2(\dom)}^{3/2}\norm{\bu^{N_u}}_{L^2(\dom)}^{1/2} \norm{v-P_{N_u}^u v}_{L^2(\dom)}dt\\
 			& \leq C\norm{\bu^{N_u}}_{L^\infty(0,T;L^2(\dom))}^{1/2} \left(\int_0^T \norm{\Grad\bu^{N_u}}_{L^2(\dom)}^{2}dt\right)^{3/4}\delta_{N_u}\left(\int_0^T\norm{\Grad v}_{L^2(\dom)}^4 dt\right)^{1/4}\\
 			&\leq C \delta_{N_u}\longrightarrow 0,\quad \text{as } N_u\to \infty,
 	\end{split}
 	\end{equation*}
 	where $\delta_{N_u}\to 0$ as $N_u\to \infty$. For term (v) we use the weak convergence of $\Grad\bu^{N_u}$, the strong convergence of $\theta^{N_u}$, and the boundedness of $\nu_{\theta^{N_u}}$, to conclude that
 	\begin{equation*}
 		(v)\stackrel{N_u\to\infty}{\longrightarrow} \int_0^T (\nu_\theta T(\bu),T(v))dt .
 	\end{equation*}
 For term (vi), we estimate
 \begin{equation*}
 	|(vi)| \leq \max\{\nu_w,\nu_f\}\delta_{N_u}\int_0^T \norm{\Grad \bu^{N_u}}_{L^2(\dom)}\norm{\Grad^2 v}_{L^2(\dom)}^2 dt \leq C \delta_{N_u}\stackrel{N_u\to\infty}{\longrightarrow} 0.
 \end{equation*}
 For term (vii), we use the strong convergenc of $\bm^{N_u}$ and $\bh^{N_u}$ in $L^2([0,T]\times\dom)$ to conclude
 \begin{equation*}
 	(vii)\stackrel{N_u\to \infty}{\longrightarrow} \mu\int_0^T(((\bm+\bh)\cdot\Grad)v,\bh)dt.
 \end{equation*}
 The term (viii) goes to zero thanks to the properties of the projection $P^u_{N_u}$:
 \begin{equation*}
 	|(viii)|\leq \left( \norm{\bh^{N_u}}_{L^\infty(0,T;L^2(\dom))}+\norm{\bm^{N_u}}_{L^\infty(0,T;L^2(\dom))}  \right)\norm{\bh^{N_u}}_{L^\infty(0,T;L^2(\dom))}\delta_{N_u} \norm{ v}_{L^1(0,T;H^3(\dom))}\leq C \delta_{N_u}.
 \end{equation*}
 For term (ix), we use the strong convergence of $\Grad \theta^{N_u}$ in $L^2$ to pass to the limit:
 \begin{equation*}
 	(ix)\stackrel{N_u\to \infty}{\longrightarrow} \varepsilon\lambda\int_0^T (\Grad\theta\otimes\Grad\theta,\Grad v) dt.
 \end{equation*}
 For the remaining term (x), we use the properties of the projection operator $P^u_{N_u}$ to show it converges to zero:
 \begin{equation*}
 	|(x)|\leq \varepsilon \lambda\norm{\Grad \theta^{N_u}}_{L^\infty(0,T;L^2(\dom))}^2 \norm{v-P^u_{N_u}v}_{L^1(0,T;W^{1,\infty}(\dom))}\leq C \delta_{N_u}\norm{v}_{L^1(0,T;H^3(\dom))} 
 \end{equation*}
 which goes to zero as $N_u\to \infty$. Combining the estimates (i) to (x), we conclude that the limit $\bu$ satisfies
  \begin{equation*}
 	\int_0^T(\bu_t,v)+b_u(\bu,\bu,v) dt =-\int_0^T(\nu_{\theta} T(\bu),T(v))+\mu_0(((\bm+\bh)\cdot\Grad)v,\bh) -\eps\lambda(\Grad\theta\otimes\Grad\theta,\Grad v)dt, \quad \forall v \in  C^3_c([0,T)\times\dom).
 \end{equation*}
 Furthermore, the limit $\bu$ is divergence free since every $\buc$ is divergence free and thus we can pass to the limit in
 \begin{equation*}
 	0 = \int_{\dom}\buc\cdot \Grad v dx \stackrel{N_u\to \infty}{\longrightarrow}\int_{\dom} \bu\cdot \Grad v dx, \quad \text{a.e. }\, t\in [0,T].
 \end{equation*}
 For the energy inequality, we can use the properties of the weak convergence to pass to the limit in~\eqref{eq:energyuthetadisc3} and obtain that the limiting functions satisfy the same inequality with the superscripts removed. Finally the weak time continuity of the variables $\bu$, $\theta$, $\bm$ and $\bh$ follows from the uniform bounds~\eqref{eq:utimecont},~\eqref{eq:timedertheta} and~\eqref{eq:mtimecont} and then combining their spatial integrability with~\cite[Lemma II.5.9]{Boyer2012}.
Hence $(\bu,\bm,\bh,\theta,\psi)$ is a weak solution of~\eqref{eq:CHF}.
 This concludes the proof of Proposition~\ref{prop:existence}.

\subsection{Existence for possibly vanishing susceptibility}
\label{sec:vanishing-mag}
Now we consider the equations with the susceptibility vanishing in one phase, i.e., in~\eqref{eq:susceptibility} we let $\delta\to 0$. We denote the corresponding approximations by $\chi_{\theta^\delta}^\delta$, $\theta^\delta$, $\psi^{\delta}$, $\bud$, $\bmd$, and $\bhd$.
We will assume that the (general) function $\chi_\theta^\delta$ satisfies  for any $0\leq  \delta\leq 1$,
\begin{equation}\label{eq:conddelta}
	0\leq \chi_\td\leq C,\quad \left|\frac{(\chi^\delta_{\theta})'}{\chi^\delta_{\theta}}\right|\leq C,\quad \forall\theta; \quad\quad   \left|\frac{(\chi_\theta^\delta)'}{(\chi^\delta_{\theta})^2}\right|\leq C \quad \forall \theta.
\end{equation} 
As previously shown in Lemma \ref{lem:heavyside}, such an assumption is for example satisfied for $\chi_\delta$ defined in~\eqref{eq:chidef} with $\mathcal{H}$ the Heavyside function approximation~\eqref{eq:heavyside}. 
Furthermore, we assume that the initial data $\bm_0$ has slightly better integrability than $L^2(\dom)$, specifically, we will assume that
\begin{equation}\label{eq:L52}
	\bm_0\in L^{\frac{12}{5}+\alpha}(\dom),
\end{equation}
for some arbitrary $\alpha>0$.
\subsubsection{Passage to the limit $\delta\to 0$}
We use the same strategy as before.  $\td, \bud, \bmd$ and $\bhd$ will converge strongly in $L^2([0,T]\times\dom)$ along a subsequence using the Aubin-Lions lemma and the trick with renormalized solutions for $\bmd$ and $\bhd$ as earlier.  $\psi^{\delta}$ converges weakly in $L^2(0,T;H^1(\dom))$. So the remaining problematic terms are again the 'capillary force' $\varepsilon \lambda\Div(\Grad\td\otimes\Grad\td)$ and the term $\mu_0(\chi^\delta_{\td})' |\bmd|^2/(\chi_{\td}^\delta)^2$. We start by showing convergence of the latter and then use that to show convergence of the capillary force term. First we recall that if the initial data satisfies
\begin{equation*}
 \int_{\dom} \left[\frac{1}{2}|\bu_0|^2 + \frac{\mu_0}{2\chi^\delta_{\theta_0}}|\bm_0|^2 + \frac{\mu_0}{2}|\bh_0|^2 +\eps\lambda|\Grad\theta_0|^2+\frac{\lambda}{\eps}F(\theta_0) \right] dx\leq C
\end{equation*}
We assume that $C$ is independent of $\delta$. Then for fixed $t>0$ thanks to the energy estimate, we have
\begin{equation*}
	 \int_{\dom} \left[\frac{1}{2}|\bud(t)|^2 + \frac{\mu_0}{2\chi^\delta_{\td(t)}}|\bmd(t)|^2 + \frac{\mu_0}{2}|\bhd(t)|^2 +\eps\lambda|\Grad\td(t)|^2+\frac{\lambda}{\eps}F(\td(t)) \right] dx\leq C
\end{equation*}
for all $t>0$ with $C$ independent of $\delta$.  In particular, $\bud,\td,\Grad\td,\bmd,\bhd\in L^\infty(0,T;L^2(\dom))$.
In addition, we obtain from the energy estimate
\begin{equation*}
	\int_0^T \int_{\dom}\left( \nu_{\td}|T(\bud)|^2 +\kappa |\Grad\psi^\delta|^2+\frac{\mu_0}{\tau}\frac{1}{\chi^\delta_{\td}}|\bmd-\chi^\delta_{\td}\bhd|^2 \right)dx ds\leq C.
 \end{equation*}
 The time continuity estimates~\eqref{eq:timedertheta},~\eqref{eq:utimecont} and~\eqref{eq:mtimecont} still apply, as they are uniform with respect to $\delta>0$. Therefore, we can use the Aubin-Lions Lemma to conclude that $\bud\to \bu$ in $L^2([0,T]\times\dom)$ and $\td\to\theta$ in $C(0,T;L^2(\dom))$ as $\delta\to 0$ up to a subsequence which we will still denote by $\delta$. To derive convergence of $\bmd$ and $\bhd$ in $L^2([0,T]\times\dom)$, we use the same argument in equation~\eqref{eq:midentity}--\eqref{eq:midentityfinal} as above. Going through those calculations, we see that they are valid for any value of $\delta$.  Thus $\bmd\to \bm$, $\bhd\to \bh$ in $L^2([0,T]\times\dom)$.  This allows us to pass to the limit in the weak formulation of~\eqref{eq:CHF} except those involving products of gradients of $\theta$ and divisions by $\chi_\theta$. In order to pass to the limit in those terms, we improve the strong convergence of $\bmd$ in $L^2([0,T]\times\dom)$ to strong convergence in $L^{\frac{12}{5}}([0,T]\times\dom)$. Going through the proof of Proposition~\ref{prop:existence}, we note that under the assumption that $\bm_0\in L^{\frac{12}{5}+\alpha}(\dom)$, we can derive a uniform $L^\infty(0,T;L^{\frac{12}{5}+\alpha}(\dom))$-estimate on $\bm^{\delta}$. To be specific, in Section~\ref{sec:Nth-limit}, we derived an $L^\infty(0,T;L^4(\dom))$-estimate on $\bmb$. Since $\bmb$ is smooth, we could alternatively take $w=|\bmb|^{\frac{2}{5}+\alpha}\bmb$ as a test function in~\eqref{eq:meqpointwise} and obtain after integrating in time, and using H\"older and Young's inequality,
 \begin{align*}
 	&\frac{5}{12+5\alpha}\norm{\bmb(t)}_{L^{\frac{12}{5}+\alpha}}^{\frac{12}{5}+\alpha}-\frac{5}{12+5\alpha}\norm{\bmc_0}_{L^{\frac{12}{5}+\alpha}}^{\frac{12}{5}+\alpha}\\
 	& =-\frac{1}{\tau}\int_0^t\left( \norm{\bmb(s)}_{L^{\frac{12}{5}+\alpha}}^{\frac{12}{5}+\alpha}-\int_{\dom}\chi_{\thb}\bmb\cdot\bhb |\bmb|^{\frac{2}{5}+\alpha} dx \right)ds\\
 	 & \leq -\frac{5}{(12+5\alpha)\tau}\int_0^t\norm{\bmb(s)}_{L^{\frac{12}{5}+\alpha}}^{\frac{12}{5}+\alpha}ds +\frac{5\chi_0^{\frac{12}{5}+\alpha}}{(12+5\alpha)\tau}\int_0^t \norm{\bhb(s)}_{L^{\frac{12}{5}+\alpha}}^{\frac{12}{5}+\alpha}ds.
 \end{align*} 
 Now using elliptic regularity to estimate $\norm{\bhb}_{L^{\frac{12}{5}+\alpha}}\leq C \norm{\bmb}_{L^{\frac{12}{5}+\alpha}}$ and then Gr\"onwall's inequality, we obtain
 \begin{equation*}
 	\norm{\bmb(t)}_{L^{\frac{12}{5}+\alpha}}^{\frac{12}{5}+\alpha}\leq C_T \norm{\bm_0^{N_u}}_{L^{\frac{12}{5}+\alpha}}^{\frac{12}{5}+\alpha}\leq C,
 \end{equation*}
 since we are assuming that $\bm_0\in L^{\frac{12}{5}+\alpha}(\dom)$ and we can choose the approximation $\bm_0^{N_u}$ such that it is uniformly in $L^{\frac{12}{5}+\alpha}(\dom)$ as well. This estimate is uniform in $N_\theta$ and $N_u$ and therefore it holds that $\bmd\in L^\infty(0,T;L^{\frac{12}{5}+\alpha}(\dom))$ uniformly in $\delta$. Using this estimate, we can improve the strong $L^2$-convergence of $\bmd$ to $L^{12/5}([0,T]\times\dom)$-convergence as follows. We have, with H\"older's inequality,
 \begin{align*}
 	\int_0^T\int_{\dom} |\bmd-\bm|^{12/5} dx dt& \leq \norm{\bmd-\bm}_{L^2([0,T]\times\dom)}^{\frac{10\alpha}{2+5\alpha}}\norm{\bmd-\bm}_{L^{\frac{12}{5}+\alpha}([0,T]\times\dom)}^{\frac{24+10\alpha}{10+25\alpha}}\\
 	&\leq C\left(\norm{\bmd}_{L^{\frac{12}{5}+\alpha}}^{\frac{24+10\alpha}{10+25\alpha}}+\norm{\bm}_{L^{\frac{12}{5}+\alpha}}^{\frac{24+10\alpha}{10+25\alpha}}\right)\norm{\bmd-\bm}_{L^2([0,T]\times\dom)}^{\frac{10\alpha}{2+5\alpha}}\stackrel{\delta\to 0}{\longrightarrow} 0. 	
 \end{align*}
 In particular, this implies that $|\bmd|^2\to |\bm|^2$ in $L^{6/5}([0,T]\times\dom)$. We use this to derive convergence of the capillary force term $\Grad\td\otimes\Grad\td$. To do so, we repeat the argument from~\eqref{eq:psilim} --  \eqref{eq:convofnorms}. All terms are treated in the same way, except when passing to the limit in the term
 \begin{equation*}
 	\int_0^T \left(\frac{\mu_0(\chi_{\td}^\delta)'}{2(\chi_{\td}^\delta)^2}|\bmd|^2,\td\right) dt,
 \end{equation*}
 we cannot use that $\chi_{\td}^\delta$ is lower bounded which would imply an upper bound on its inverse. However, we can use the strong $L^{12/5}$-convergence of $\bmd$ that we have just derived. Since $\td\in L^\infty(0,T;H^1(\dom))$, by the Sobolev embedding theorem, we have that $\td\in L^\infty(0,T;L^6(\dom))\subset L^6([0,T]\times\dom)$ uniformly in $\delta$ and therefore $\td\weak \theta$ in $L^6([0,T]\times \dom)$ up to a subsequence. Since $|\bmd|^2\rightarrow |\bm|^2$ in $L^{6/5}([0,T]\times\dom)$ as $\delta\to 0$ up to a subsequence, this implies that $\td|\bmd|^2\weak \theta|\bm|^2$ in $L^1([0,T]\times\dom)$.Since,  at the same time  $\td\to \theta$ strongly in $C([0,T];L^2(\dom))$ and therefore almost everywhere (after extraction of a subsequence), we can use Lemma~\ref{lem:kennethlemma} and the uniform boundedness of $(\chi^\delta_{\td})'/(\chi_{\td}^\delta)^2$ to conclude that
 \begin{equation*}
 		\int_0^T \left(\frac{\mu_0(\chi_{\td}^\delta)'}{2(\chi_{\td}^\delta)^2}|\bmd|^2,\td\right) dt \stackrel{\delta\to 0}{\weak } 	\int_0^T \left(\frac{\mu_0\chi_{\theta}'}{2\chi_{\theta}^2}|\bm|^2,\theta\right) dt,
 \end{equation*}
 (up to a subsequence). Thus, the argument from~\eqref{eq:psilim} --  \eqref{eq:convofnorms} can be repeated and we obtain strong convergence of $\Grad\td$ to $\Grad\theta$ in $L^2([0,T]\times\dom)$. Similarly, we have that the term in the equation for $\psi^\delta$
  \begin{equation*}
 	\int_0^T  \frac{(\chi_{\td}^\delta)'}{(\chi_{\td}^\delta)^2}|\bmd|^2 g dx dt \stackrel{\delta\to 0}{\longrightarrow} 	\int_0^T  \frac{\chi_{\theta}'}{\chi_{\theta}^2}|\bm|^2 g dx dt
 \end{equation*}
 for any test function $g\in L^\infty([0,T]\times\dom)$ thanks to the uniform boundedness of $(\chi^\delta_{\td})'/(\chi_{\td}^\delta)^2$, the strong convergence of $\td$ in $C([0,T];L^2(\dom))$ and the strong convergence of $\bmd$ in $L^2([0,T]\times\dom)$. This concludes the proof of existence of weak solutions for~\eqref{eq:CHF} with possibly vanishing susceptibility, Theorem~\ref{thm:existence}.

\section{Vanishing relaxation time $\tau\to 0$: Proof of Theorem \ref{thm:tauzerolimit}}\label{sec:relaxation}

We recall that the limiting system in the limit $\tau \to 0$ is given by
\begin{subequations}\label{eq:CHFtau1}
	\begin{align}
		\Bu_t+(\Bu\cdot\Grad)\Bu+\Grad P&=\Div(\nu_\Theta T(\Bu))+\mu_0(\Bm\cdot\Grad)\Bh-\eps\lambda\Delta\Theta\Grad\Theta,\label{eq:nse1T}\\
		\Div\Bu&=0,\\
		0&=\Bm-\chi_\Theta\Bh,\label{eq:eqmT}\\
		\Curl\Bh &=0,\quad \Div(\Bm+\Bh)=0,\label{eq:maxwellT}\\
		\Theta_t+(\Bu\cdot\Grad)\Theta&=\kappa\Delta\Psi,\label{eq:phasefield1T}\\
		\Psi&= -\eps\lambda\Delta\Theta+\frac{\lambda}{\eps}F'(\Theta)-\frac{\mu_0}{2 }\chi'_\Theta|\Bh|^2.\label{eq:psiT}
	\end{align}
\end{subequations}
In this section, we will prove Theorem \ref{thm:tauzerolimit}, which makes this limit mathematically rigorous. We denote the solution of~\eqref{eq:CHF} as $\mathcal{U}^\tau=(\but,\tht,\bmt,\bht,\psit)$ to make its dependence on $\tau$ explicit for this section. For the susceptibility $\chi_\theta$ we assume at least that
\begin{equation}
	\label{eq:assonchi}
|\chi_\theta|\leq C,\quad 	|\chi_\theta'|\leq C,\quad |\chi_\theta' \theta|\leq C,\quad \left| \frac{\chi_\theta'}{\chi_{\theta}}\right|\leq C, \quad \left| \frac{\chi_\theta' \theta}{\chi_{\theta}}\right|\leq C,\quad \forall \theta\in \R.
	\end{equation}
As a consequence of Lemma \ref{lem:heavyside}, considering the limit $\delta \to 0$, these bounds hold for our prototypical choice of $\chi_\theta$ in \eqref{eq:chidef}. Our final goal is to prove Theorem \ref{thm:tauzerolimit}, which we restate here:
\tauzerolimit*
\begin{proof}
	This proof follows closely that of~\cite[Theorem 4.1]{Nochetto2019}.
	First, we note that the a priori energy estimate~\eqref{eq:energy0} holds for any $\tau>0$ and therefore implies the uniform in $\tau$ bounds
	\begin{align*}
		&\{\but\}_{\tau>0}\subset L^\infty(0,T;\hdiv)\cap L^2(0,T;\V)\\
		&\{\tht\}_{\tau>0}\subset L^\infty(0,T;H^1(\dom))\\
		&\{\bmt\}_{\tau>0}\subset L^\infty(0,T;L^2(\dom))\\
		&\{\bht\}_{\tau>0}\subset L^\infty(0,T;L^2(\dom))\\
		&\{\psit\}_{\tau>0}\subset L^2(0,T;H^1(\dom)).
	\end{align*}
	In addition, the bounds on the time derivatives on $\but$ and $\tht$, equations~\eqref{eq:timedertheta} and~\eqref{eq:utimecont}, still hold uniformly in $\tau$ and we additionally get the bound
	\begin{equation}\label{eq:relax}
		\frac1{\tau^{1/2}}\norm{\bmt-\chi_{\tht} \bht}_{L^2([0,T]\times\dom)}\leq C,
	\end{equation}
	where $C$ is independent of $\tau$. Thus, we have by Aubin-Lions-Simon Lemma that $\but$   converges strongly up to a subsequence in $L^2([0,T]\times\dom)$ to a limit $\Bu$ and that $\tht$ converges strongly up to a subsequence in $C(0,T;L^2(\dom))$ to a limit $\Theta$. We also obtain that $\psit\weak \Psi$ in $L^2(0,T;H^1(\dom))$ as $\tau\to 0$ and $\bmt\weakstar \Bm$, $\bht\weakstar\Bh$ in $L^\infty(0,T;L^2(\dom))$ as $\tau\to 0$, up to a subsequence. As before, we aim to derive strong convergence of the sequences $\{\bmt\}$ and $\{\bht\}$. First, we observe that the limit $\Bh$ of the sequence $\{\bht\}$ satisfies the weak form of equation~\eqref{eq:maxwellT} in the limit, due to the linearity of the equation:
	\begin{equation}\label{eq:Hweak}
		\int_0^T\int_{\dom}(\Bh+\Bm)\cdot \Grad \phi dx = \int_0^T\int_{\partial\dom} \bh_a\cdot \mathbf{n} \phi dS. 
	\end{equation}
	Furthermore, multiplying the equation for $\bmt$,  equation~\eqref{eq:mweak} by $\tau$ and sending $\tau \to 0$, using the strong convergence of $\but$ and $\tht$ and the weak convergence of $\bht$ and $\bmt$, we obtain in the limit
	\begin{equation}
		\label{eq:Mweak}
		\int_0^T\int_{\dom} \Bm\cdot v_3 dx dt = \int_0^T\int_{\dom} \chi_{\Theta} \Bh \cdot v_3 dx dt.
	\end{equation} 
	This can be extended to test functions $v_3\in L^1(0,T;L^2(\dom))$ due to the bounds on $\Bm$, $\Bh$ and $\chi_\theta$. Using this in~\eqref{eq:Hweak}, we obtain
		\begin{equation*}
		\int_0^T\int_{\dom}(1+\chi_\Theta)\Bh\cdot \Grad \phi dxdt  = \int_0^T\int_{\partial\dom} \bh_a\cdot\mathbf{n} \phi dSdt . 
	\end{equation*}
	Using $\phi=\Phi$ as a test function, where $\Grad\Phi =\Bh$, we have
		\begin{equation}\label{eq:Hweak1}
		\int_0^T\int_{\dom}(1+\chi_\Theta)|\Bh|^2  dx dt = \int_0^T\int_{\partial\dom} \bh_a\cdot \mathbf{n} \Phi dSdt . 
	\end{equation}
	On the other hand, taking $\phi=\varphi^\tau$ as a test function in the equation for $\bht$, we have
	\begin{equation}\label{eq:taupos}
		\int_0^T\int_{\dom} \bht\cdot (\bht+\bmt)dx dt = \int_0^T\int_{\partial\dom}\bh_a\cdot\mathbf{n} \pht dSdt
	\end{equation}
	Sending $\tau\to 0$, on the right hand side, we obtain 
	\begin{equation}\label{eq:taurhs}
		 \int_0^T\int_{\partial\dom}\bh_a\cdot\mathbf{n} \pht dSdt\stackrel{\tau\to 0}{\longrightarrow}  \int_0^T\int_{\partial\dom}\bh_a\cdot\mathbf{n} \Phi dSdt,
	\end{equation}
	using the weak convergence of $\pht$ in $L^2(0,T;H^1(\dom))$. We rewrite the left hand side of~\eqref{eq:taupos}, and then send $\tau\to 0$:
	\begin{equation*}
			\int_0^T\int_{\dom} \bht\cdot (\bht+\bmt)dx dt = 	\int_0^T\int_{\dom}(1+\chi_{\tht})|\bht|^2 dx dt + \int_0^T\int_{\dom} \bht\cdot (\bmt-\chi_{\tht}\bht)dx dt. 
	\end{equation*}
	Sending $\tau\to 0$ on both sides, and using~\eqref{eq:relax} on the right hand side, we obtain (using bars to denote the weak limits)
	\begin{equation*}
		\int_0^T\int_{\dom} \overline{\Bh\cdot(\Bm+\Bh)} dx dt =	\int_0^T\int_{\dom} (1+\chi_{\Theta})\overline{|\Bh|^2} dx dt +\lim_{\tau\to 0}\int_0^T\int_{\dom} \bht\cdot (\bmt-\chi_{\tht}\bht)dx dt,
	\end{equation*}
	where
	\begin{equation*}
		\left|\lim_{\tau\to 0}\int_0^T\int_{\dom} \bht\cdot (\bmt-\chi_{\tht}\bht)dx dt\right|\leq \lim_{\tau\to 0} \norm{\bht}_{L^2}\norm{\bmt-\chi_{\tht}\bht}_{L^2([0,T]\times\dom)} \leq C \lim_{\tau\to 0}\tau^{1/2}=0,
	\end{equation*}
	thanks to~\eqref{eq:relax}. Hence, we have
		\begin{equation}\label{eq:bhtconv1}
		\int_0^T\int_{\dom} \overline{\Bh\cdot(\Bm+\Bh)} dx dt =	\int_0^T\int_{\dom} (1+\chi_{\Theta})\overline{|\Bh|^2} dx dt,
	\end{equation}
	in the limt as $\tau\to 0$. Combining this with~\eqref{eq:taurhs} and~\eqref{eq:Hweak1}, we obtain
	\begin{equation}\label{eq:bhtconv2}
		\int_0^T\int_{\dom}(1+\chi_{\Theta})\overline{|\Bh|^2} dx dt=\lim_{\tau\to 0} \int_0^T\int_{\dom}(1+\chi_{\tht})|\bht|^2 dx dt =\int_0^T\int_{\dom}(1+\chi_{\Theta}){|\Bh|^2} dx dt.
	\end{equation}
	Since $\tht\to \Theta$ in $L^2([0,T]\times\dom)$ and therefore almost everywhere, we have, since $\chi_\theta$ is bounded and $\Bh\in L^2([0,T]\times\dom)$,
	\begin{equation}\label{eq:bhtconv3}
		\lim_{\tau\to 0}\int_0^T\int_{\dom}\chi_{\tht}|\Bh|^2 dx dt = \int_0^T\int_{\dom} \chi_{\Theta} |\Bh|^2 dx dt.
	\end{equation}
	Next using Lemma~\ref{lem:kennethlemma} for $\chi_{\tht}\to \chi_{\Theta}$ a.e. and $\bht\cdot \Bh\weak |\Bh|^2$ in $L^1([0,T]\times\dom)$, we have
	\begin{equation}\label{eq:bhtconv4}
		\int_0^T\int_{\dom} (1+\chi_{\tht})\Bh \cdot \bht dx dt \stackrel{\tau\to 0}{\longrightarrow}\int_0^T\int_{\dom} (1+\chi_{\Theta})|\Bh|^2 dx dt.
	\end{equation}
	Combining~\eqref{eq:bhtconv2}, \eqref{eq:bhtconv3} and~\eqref{eq:bhtconv4}, we therefore obtain
	\begin{align*}
	\lim_{\tau\to 0}	\int_0^T\int_{\dom} |\bht-\Bh|^2 dx dt & \leq \lim_{\tau\to 0}	\int_0^T\int_{\dom} (1+\chi_{\tht})|\bht-\Bh|^2 dx dt\\
	& = \lim_{\tau\to 0}\Bigg(\int_0^T\int_{\dom}(1+\chi_{\tht})|\Bh|^2 dx dt +\int_0^T\int_{\dom}(1+\chi_{\tht})|\bht|^2 dx dt\\
	&\hphantom{ = \lim_{\tau\to 0}\Bigg(}\quad -2\int_0^T\int_{\dom}(1+\chi_{\tht})\Bh\cdot\bht dx dt\Bigg) =0,
	\end{align*}
	therefore, $\bht\to \Bh$ strongly in $L^2([0,T]\times\dom)$ up to a subsequence, as $\tau\to 0$. Next, we derive strong convergence of $\bmt$ from the strong convergence of $\bht$. We have already seen in~\eqref{eq:Mweak} that $\bmt\weak \chi_{\Theta}\Bh$ in $L^2([0,T]\times\dom)$. Then, using the strong convergence of $\bht$ and $\tht$ and the bound~\eqref{eq:relax}, we have
	\begin{equation*}
		\lim_{\tau\to 0}\norm{\bmt-\chi_{\Theta}\Bh}_{L^2}\leq \lim_{\tau\to 0} \norm{\bmt -\chi_{\tht}\bht}_{L^2}+\lim_{\tau\to 0}\norm{\chi_{\tht} \bht - \chi_{\Theta}\Bh}_{L^2} =0,
	\end{equation*}
	hence also $\bmt$ converges strongly to $\Bm$ in $L^2([0,T]\times\dom)$. Now we can pass to the limit in all the terms of the weak formulation of~\eqref{eq:CHF}. To pass to the limit in the term
	\begin{equation*}
	\int_0^T\left(	\frac{\mu_0 \chi_{\tht}'}{\chi_{\tht}^2}|\bmt|^2, z\right)dt
	\end{equation*}
	we rewrite it as
	\begin{multline*}
			\int_0^T\left(	\frac{\mu_0 \chi_{\tht}'}{\chi_{\tht}^2}|\bmt|^2, z\right)dt = 	\underbrace{\int_0^T\left(	\frac{\mu_0 \chi_{\tht}'}{\chi_{\tht}^2}|\bmt-\chi_{\tht}\bht|^2, z\right)dt }_{(i)}- \underbrace{\int_0^T\left(	 {\mu_0 \chi_{\tht}'} | \bht|^2, z\right)dt}_{(ii)} 
			 + 	\underbrace{\int_0^T\left(	\frac{2\mu_0 \chi_{\tht}'}{\chi_{\tht}}\bmt\cdot\bht, z\right)dt }_{(iii)}.
	\end{multline*}
	For the first term $(i)$, we use Assumption~\eqref{eq:assonchi} and that from the energy balance
	\begin{equation}\label{eq:tauest}
		\int_0^T\int_{\dom} \frac{\left|\bmt-\chi_{\tht}\bht\right|^2 }{\chi_{\tht}}dx dt\leq C \tau,
	\end{equation}
	thus it goes to zero as $\tau\to 0$. For the second term $(ii)$, we use the boundedness of $\chi'_\theta$,~\eqref{eq:assonchi} and the strong convergence of $\bht$ to $\Bh$ in $L^2([0,T]\times\dom)$ to conclude that
	\begin{equation*}
		(ii)\stackrel{\tau\to 0}{\longrightarrow}\int_0^T\mu_0(\chi_\theta' |\Bh|^2,z)dt.
	\end{equation*}
	For the third term, we use again Assumption~\eqref{eq:assonchi} combined with the strong convergence of $\bmt$ and $\bht$ to conclude that
	\begin{equation*}
		(iii)\stackrel{\tau\to 0}{\longrightarrow}2\int_0^T\mu_0(\chi_\theta' |\Bh|^2,z)dt.
	\end{equation*}
	Thus, we have
	\begin{equation*}
			\int_0^T\left(	\frac{\mu_0 \chi_{\tht}'}{\chi_{\tht}^2}|\bmt|^2, z\right)dt \stackrel{\tau\to 0}{\longrightarrow}\int_0^T\mu_0(\chi_\theta' |\Bh|^2,z)dt,
	\end{equation*}
	as desired. To obtain strong convergence of $\Grad\tht$, we need to repeat the argument from~\eqref{eq:psilim}--\eqref{eq:convofnorms}. As in the previous section~\ref{sec:vanishing-mag}, the main challenge is to pass to the limit in the last term in the equation 
	\begin{equation}\label{eq:psilimapprox2}
		\int_0^T(\psi^{\tau},\tht)dt =\int_0^T\left(\lambda \eps(\Grad\tht,\Grad \tht) +\frac{\lambda}{\eps}(F'(\tht),\tht)-\left(\frac{\mu_0}{2\chi_{\tht}^2}\chi'_{\tht}|\bmt|^2,\tht\right)\right) dt
	\end{equation}
	(which corresponds to~\eqref{eq:psilimapprox} above.) The left hand side term converges to $\int_0^T(\Psi,\Theta) dt$ thanks to the weak convergence of $\psit$ in $L^2([0,T]\times\dom)$ and the strong convergence of $\tht$ in $L^2([0,T]\times\dom)$ and the second term on the right hand side converges to $\frac{\lambda}{\epsilon}\int_0^T (F'(\Theta),\Theta) dt$ thanks to the strong convergence of $\tht$ and the bounds on $F$,~\eqref{eq:Fass}. Therefore, we decompose the last term as follows:
	\begin{align*}
	\int_0^T	\left(\frac{\mu_0}{2\chi_{\tht}^2}\chi'_{\tht}|\bmt|^2,\tht\right)dt & = \underbrace{	\int_0^T	\left(\frac{\mu_0\chi'_{\tht}\tht}{2\chi_{\tht}},\frac{|\bmt-\chi_{\tht}\bht|^2}{\chi_{\tht}}\right) dt}_{(i)} \\
	&- \underbrace{	\int_0^T	\left(\frac{\mu_0}{2}\chi'_{\tht}\tht,| \bht|^2\right) dt}_{(ii)} + \underbrace{	\int_0^T	\left(\frac{\mu_0\chi'_{\tht}}{ \chi_{\tht}}\bmt\cdot\bht ,\tht\right) dt}_{(iii)} .
	\end{align*}
	Term (i) goes to zero thanks to the estimate~\eqref{eq:tauest} and the assumed bounds on $\chi_\theta$,~\eqref{eq:assonchi},
	\begin{equation*}
		|(i)|  \leq \frac{\mu_0}{2}\sup_{\tht\in \R}\left|\frac{ \chi'_{\tht}\tht}{ \chi_{\tht}}\right|	\int_0^T	 \int_{\dom}\frac{|\bmt-\chi_{\tht}\bht|^2}{\chi_{\tht}}dx dt \leq C \sqrt{\tau}.
	\end{equation*}
	For the second term, we use the bounds on $\chi_\theta$,~\eqref{eq:assonchi}, combined with the strong convergence of $\bht$ in $L^2([0,T]\times\dom)$ to apply Lemma~\ref{lem:kennethlemma} and conclude that
	\begin{equation*}
		(ii) \stackrel{\tau\to0}{\longrightarrow} \int_0^T \left(\frac{\mu_0}{2}\chi_{\Theta}' \Theta,|\Bh|^2\right) dt.
	\end{equation*}
	Finally, for (iii), we use again the bounds on $\chi_\theta$ and that $\bmt\cdot\bht\to \Bm\cdot\Bh = \chi_{\Theta} |\Bh|^2$ in $L^1([0,T]\times\dom)$ to apply Lemma~\ref{lem:kennethlemma} and conclude that
	\begin{equation*}
		(iii) \stackrel{\tau\to0}{\longrightarrow} \int_0^T \left(\mu_0\chi_{\Theta}' \Theta,|\Bh|^2\right) dt.
	\end{equation*}
	Thus, 
	\begin{equation*}
			\int_0^T	\left(\frac{\mu_0}{2\chi_{\tht}^2}\chi'_{\tht}|\bmt|^2,\tht\right)dt \stackrel{\tau\to 0}{\longrightarrow} \int_0^T \left(\frac{\mu_0}{2}\chi_{\Theta}' \Theta,|\Bh|^2\right) dt,
	\end{equation*}
	and we can proceed as in~\eqref{eq:psilim}--\eqref{eq:convofnorms} to conclude that $\Grad\tht\to \Grad\Theta$ in $L^2([0,T]\times\dom)$ up to a subsequence.	 
	 Thus the limit $(\Bu,\Theta,\Bm,\Bh,\Psi)$ is a weak solution of~\eqref{eq:CHFtau}.

\end{proof}

\appendix

\section{Derivation of the diffuse interface ferrofluids model}
\label{sec:derivation}
To derive the system~\eqref{eq:CHF}, we follow roughly~\cite{Nochetto2014} which is based on using Onsager's variational principle~\cite{Onsager1931,Onsager1931b}. We assume that the variables $\bu$, $\theta$ and $\bm$ satisfy the Navier-Stokes equations and advection equations of the form
\begin{align}
	\bu_t +(\bu\cdot\Grad)\bu +\Grad p &= \Div(\nu_\theta T(u))+\Fu,\label{eq:uansatz}\\
	\Div u & = 0,\\
	\bm_t +(\bu\cdot\Grad)\bm& = \Fm,\label{eq:mansatz}\\
	\theta_t+ \bu\cdot \Grad \theta & = -\Div \Jtheta,\label{eq:thetaansatz}
\end{align}
with forces and fluxes $\Fu, \Jtheta$ and $\Fm$ to be determined. This means that both the phase field variable $\theta$ and the magnetization $\bm$ are convected by the fluid. Furthermore, this also implies that we assume that the fluid is incompressible and both phases have the same density. As it was derived in previous works~\cite{Rosensweig1985,Rosensweig1985a,Odenbach2002}, the magnetic field $\bh$ satisfies a constitutive relation of the form
\begin{equation}\label{eq:bfield}
	\mathbf{b}= \bm + \bh,
\end{equation}
and the magnetostatics equations
\begin{equation}\label{eq:hfield}
	\Div \mathbf{b}=0,\quad \Curl\bh = 0.
\end{equation}
As specified above in~\eqref{eq:decomph}, the magnetic field is a sum of the demagnetizing field $\bh_d$ and the applied field $\bh_a$ which we assume is not perturbed by $\bh_d$ and $\bm$ (this is a simplifying assumption). We assume that $\bh_a$ is a given smooth harmonic field and that $\bh_d\approx 0$ outside $\dom$. We assume that the boundary is impermeable and that there is no slip, resulting in homogeneous Dirichlet boundary conditions for $\bu$, i.e., $\bu=0$ on $\partial\dom$ and no flux boundary conditions for $\Jtheta$, i.e., $\Jtheta\cdot \mathbf{n} = 0$ on $\partial\dom$. Under the assumptions above, the boundary conditions for $\bh$ become~\eqref{eq:bcelliptic}. We assume that the free energy of the system is given by
\begin{equation}
	\label{eq:freeenergy}
	\mathcal{E}(\sol) = \int_{\dom} \left(\frac12|\bu|^2 + \lambda\left(\frac{\varepsilon}{2}|\Grad\theta|^2 +\frac{1}{\varepsilon}F(\theta)\right)+\frac{\mu_0}{2\chi_\theta}|\bm|^2 +\frac{\mu_0}{2}|\bh|^2\right) dx.
\end{equation}
The first term is the kinetic energy of the fluid, the second two terms are the Cahn-Hilliard energy measuring the system's tendency to minimize energy by separating into different phases, where $\lambda$ is the surface tension between the phases, the third term is the magnetization energy density and the final term is the energy density of the magnetic field. We compute the time derivative of the energy and plug in the equations for the evolution of $\bu$, $\bm$, $\theta$ and $\bh$, \eqref{eq:uansatz}, \eqref{eq:mansatz}, \eqref{eq:thetaansatz}, \eqref{eq:hfield}, the boundary conditions, and use that $\bu$ is divergence free:
\begin{align*}
	\frac{d\mathcal{E}}{dt} & = \int_{\dom} \left(\bu\cdot \bu_t + \lambda \varepsilon \Grad\theta\Grad\theta_t + \frac{\lambda}{\varepsilon}F'(\theta)\theta_t +\frac{\mu_0}{\chi_\theta}\bm\cdot \bm_t -\frac{\mu_0 \chi_\theta'}{2\chi_\theta^2 }\theta_t |\bm|^2 +\mu_0 \bh \bh_t\right) dx \\
	& = \int_{\dom} \Bigg(\bu\cdot \left[-(\bu\cdot\Grad)\bu -\Grad p +\Div(\nu_\theta T(u) ) + \Fu\right] - \lambda \varepsilon \Delta\theta \theta_t + \frac{\lambda}{\varepsilon}F'(\theta)\theta_t +\frac{\mu_0}{\chi_\theta}\bm\cdot\left[-(\bu\cdot\Grad)\bm + \Fm\right]\\
	&\qquad \qquad -\frac{\mu_0 \chi_\theta'}{2\chi_\theta^2 }\theta_t |\bm|^2 -\mu_0 \varphi \Div \bh_t\Bigg) dx +\mu_0\int_{\partial\dom} \varphi (\bh_a-\bm)_t \cdot\mathbf{n} dS\\
	& = \int_{\dom} \Bigg(- \nu_\theta |T(u)|^2 + \bu\cdot \Fu - \lambda \varepsilon \Delta\theta \left[-\bu\cdot \Grad \theta -\Div\Jtheta\right] + \frac{\lambda}{\varepsilon}F'(\theta)\left[-\bu\cdot \Grad \theta -\Div\Jtheta\right]  \\
	&\qquad \qquad +\frac{\mu_0}{\chi_\theta}\bm\cdot\left[-(\bu\cdot\Grad)\bm + \Fm\right]-\frac{\mu_0 \chi_\theta'}{2\chi_\theta^2 }\left[-\bu\cdot \Grad \theta -\Div\Jtheta\right]  |\bm|^2 -\mu_0 \bh  \bm_t\Bigg) dx  \\
	&\qquad
	  + \mu_0\int_{\partial\dom} \varphi (\bh_a)_t \cdot\mathbf{n} dS\\
	& = \int_{\dom} \Bigg(- \nu_\theta |T(u)|^2 + \bu\cdot \Fu + \lambda \varepsilon \Delta\theta \Grad \theta\cdot\bu -\lambda \varepsilon \Delta\Grad\theta\cdot\Jtheta + \frac{\lambda}{\varepsilon}\Grad F'(\theta)   \cdot \Jtheta -\frac{\mu_0}{2\chi_\theta}  (\bu\cdot\Grad)|\bm|^2  \\
	&\qquad \qquad  + \frac{\mu_0}{\chi_\theta}\bm\cdot \Fm -\frac{\mu_0 \chi_\theta'}{2\chi_\theta^2 }\left[-\bu\cdot \Grad \theta -\Div\Jtheta\right]  |\bm|^2 -\mu_0 \bh  \left[-(\bu\cdot\Grad)\bm + \Fm\right]\Bigg) dx  \\
	&\qquad  + \mu_0\int_{\partial\dom} \varphi (\bh_a)_t \cdot\mathbf{n} dS\\
	& = \int_{\dom} \Bigg(- \nu_\theta |T(u)|^2 + \bu\cdot \left(\Fu +\lambda \varepsilon \Delta\theta \Grad \theta  -\mu_0 (\bm\cdot\Grad)\bh \right) +\Fm\cdot\left( \frac{\mu_0}{\chi_\theta}\bm -\mu_0\bh\right)    \\
	&\qquad \qquad +\Jtheta\cdot\left(\frac{\lambda}{\varepsilon}\Grad F'(\theta) -\lambda \varepsilon \Delta\Grad\theta-\Grad\left(\frac{\mu_0 \chi_\theta'}{2\chi_\theta^2 }|\bm|^2\right) \right)  \Bigg) dx    + \mu_0\int_{\partial\dom} \varphi (\bh_a)_t \cdot\mathbf{n} dS
\end{align*}
The power of the system, i.e., the time derivative of the work $\mathcal{W}$ of internal forces, is the collection of the terms that have a scalar product with $\bu$. We assume that the system is closed, meaning there are no external forces. This implies that $\frac{d\mathcal{W}}{dt}=0$. Thus, we must have
\begin{equation*}
	\int_{\dom}  \bu\cdot \left(\Fu +\lambda \varepsilon \Delta\theta \Grad \theta  -\mu_0 (\bm\cdot\Grad)\bh \right)   dx =0, 
\end{equation*}
which allows us to identify $\Fu$,
\begin{equation}\label{eq:Fu}
	\Fu = -\lambda \varepsilon \Delta\theta \Grad \theta  +\mu_0 (\bm\cdot\Grad)\bh.
\end{equation}
The first law of thermodynamics says that the time derivative of the energy is equal to the power of the system minus the temperature $\mathcal{T}$ times the time derivative of the entropy $\mathcal{S}$, the entropy production,
\begin{equation*}
	\frac{d\mathcal{E}}{dt} = \frac{d\mathcal{W}}{dt}-\mathcal{T}\frac{d\mathcal{S}}{dt}.
\end{equation*}
The second law of thermodynamics states that the entropy of a closed system must increase or stay constant. Thus we must have
\begin{equation*}
	-\mathcal{T}\frac{d\mathcal{S}}{dt} =  \int_{\dom} \Bigg(- \nu_\theta |T(u)|^2  +\Fm\cdot\left( \frac{\mu_0}{\chi_\theta}\bm -\mu_0\bh\right)    +\Jtheta\cdot\left(\frac{\lambda}{\varepsilon}\Grad F'(\theta) -\lambda \varepsilon \Delta\Grad\theta-\Grad\left(\frac{\mu_0 \chi_\theta'}{2\chi_\theta^2 }|\bm|^2\right) \right)  \Bigg) dx    \leq 0.
\end{equation*}
In order to achieve this, $\Fm$ must be proportional to 
$$-\left( \frac{\mu_0}{\chi_\theta}\bm -\mu_0\bh\right) $$
and $\Jtheta$ must be proportional to 
$$-\Grad\left(\frac{\lambda}{\varepsilon}  F'(\theta) -\lambda \varepsilon \Delta \theta-  \frac{\mu_0 \chi_\theta'}{2\chi_\theta^2 }|\bm|^2 \right).$$
This results in the choices
\begin{equation*}
	\Jtheta = -\kappa \Grad\left(\frac{\lambda}{\varepsilon}  F'(\theta) -\lambda \varepsilon \Delta \theta-  \frac{\mu_0 \chi_\theta'}{2\chi_\theta^2 }|\bm|^2 \right),
\end{equation*}
or 
\begin{equation*}
	\theta_t +\bu\cdot\Grad\theta = \kappa\Delta\psi,\quad \psi = \frac{\lambda}{\varepsilon}  F'(\theta) -\lambda \varepsilon \Delta \theta-  \frac{\mu_0 \chi_\theta'}{2\chi_\theta^2 }|\bm|^2, 
\end{equation*}
and 
\begin{equation*}
	\Fm = -\frac{1}{\tau}(\bm-\chi_\theta\bh).
\end{equation*}
We choose $\Fm$ inversely proportional to the relaxation time $\tau$ as this term drives $\bm$ to be parallel to $\bh$ in equilibrium and multiply by $\chi_\theta$ so that the magnetic field acts on the magnetization when in the ferrofluid phase. In summary, we have obtained the system~\eqref{eq:CHF}.

\section{Result from Real Analysis}
We believe the following result is standard, but state it here for convenience.
\begin{lemma}\label{lem:kennethlemma}
	Let $u_n, v_n:\dom\to\R$ be sequences of measurable functions, such that
	\begin{align*}
	&u_n\rightarrow u,\quad \text{a.e. in } \dom, \quad \norm{u_n}_{L^\infty(\dom)}\leq C\, \quad\forall \, n\\
	&v_n\weak v,\quad \text{in } L^1(\dom), 
	\end{align*} 
	for some functions $u\in L^\infty(\dom)$, $v\in L^1(\dom)$. Then
	\begin{equation*}
	u_n v_n\rightharpoonup u\, v,\quad \text{in } L^1(\dom),\quad \text{as } n\rightarrow\infty.
	\end{equation*}
\end{lemma}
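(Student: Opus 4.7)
The plan is to establish weak $L^1$ convergence by testing against an arbitrary $\phi\in L^\infty(\dom)$ and showing $\int_\dom u_n v_n \phi\,dx \to \int_\dom u v \phi\,dx$. I will first make the standard split
\[
\int_\dom (u_n v_n - u v)\phi\,dx = \int_\dom (u_n - u) v_n \phi\,dx + \int_\dom u(v_n - v)\phi\,dx,
\]
and observe that the second term vanishes directly: since $u\phi \in L^\infty(\dom)$ and $v_n \weak v$ in $L^1(\dom)$, the definition of weak convergence applied with test function $u\phi$ gives convergence to zero.

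The heart of the argument is the first term, where $(u_n-u)\to 0$ a.e.\ but not in $L^1$ in general. To handle this I will combine two classical tools: the Dunford--Pettis theorem, which characterizes relatively weakly compact subsets of $L^1$ by equi-integrability, and Egorov's theorem. Since $\{v_n\}$ converges weakly in $L^1(\dom)$, it is relatively weakly compact, hence equi-integrable: for every $\varepsilon>0$ there exists $\delta>0$ such that $|A|<\delta$ implies $\sup_n \int_A |v_n|\,dx < \varepsilon$. (The domain $\dom$ in our setting is bounded, so $\sigma$-finiteness is not an issue.) Given any $\varepsilon>0$, Egorov's theorem then yields a measurable set $A_\varepsilon \subset \dom$ with $|\dom \setminus A_\varepsilon|<\delta$ on which $u_n\to u$ uniformly.

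With this choice, I split the first term into its integrals over $A_\varepsilon$ and $\dom\setminus A_\varepsilon$ and estimate
\[
\left| \int_\dom (u_n-u) v_n \phi\,dx \right|
\le
\|u_n - u\|_{L^\infty(A_\varepsilon)}\|\phi\|_{L^\infty}\|v_n\|_{L^1(\dom)}
+ 2C\|\phi\|_{L^\infty}\int_{\dom\setminus A_\varepsilon} |v_n|\,dx.
\]
The $L^1$-norms $\|v_n\|_{L^1}$ are uniformly bounded (another consequence of weak $L^1$ convergence), so the first term tends to $0$ as $n\to\infty$ for fixed $\varepsilon$ by uniform convergence on $A_\varepsilon$, while the second term is bounded by $2C\|\phi\|_{L^\infty}\varepsilon$ uniformly in $n$ thanks to equi-integrability. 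Taking $\limsup_{n\to\infty}$ and then sending $\varepsilon\to 0$ finishes the proof.

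The argument is entirely standard; the only conceptual obstacle is recognizing that a.e.\ convergence plus $L^\infty$ bound is too weak to pass to the limit in $\int (u_n-u)v_n \phi$ directly (there is no dominated convergence available since $v_n$ varies with $n$), which forces the use of equi-integrability from Dunford--Pettis to control the bad set produced by Egorov's theorem. No novel estimates or delicate regularity issues arise, so I expect the proof to be short and self-contained.
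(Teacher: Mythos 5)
Your proof is correct and complete: the splitting into $\int (u_n-u)v_n\phi$ and $\int u(v_n-v)\phi$, handling the second term by weak $L^1$ convergence against the $L^\infty$ test function $u\phi$, and controlling the first via Dunford--Pettis equi-integrability combined with Egorov's theorem (choosing the exceptional set smaller than the equi-integrability threshold) is the standard argument for this lemma. The paper itself gives no proof and only cites an external reference, so there is nothing to compare against beyond noting that your argument is the expected one and is self-contained.
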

The proof can be found for example in~\cite[Lemma A.1]{efield}.

\bibliographystyle{abbrv}
\bibliography{diffuse}

\end{document}